\newtheorem{theorem}{Theorem}[section]
\newtheorem{lemma}[theorem]{Lemma}
\newtheorem{corollary}[theorem]{Corollary}
\newtheorem{definition}[theorem]{Definition}
\newtheorem{remark}[theorem]{Remark}
\newtheorem{proposition}[theorem]{Proposition}
\numberwithin{equation}{section}
\newenvironment{proof}[1][Proof]{\noindent\textbf{#1.} }{\hfill $\Box$}
 \makeatletter\setlength{\textwidth}{15.0cm}
\begin{document}

\title{\textbf{A Nonlocal Dispersal SIS Epidemic Model in Heterogeneous Environment}}
\author{Fei-Ying Yang,~ Wan-Tong Li\thanks{%
Corresponding author (wtli@lzu.edu.cn).}~
and Liang Zhang \\
School of Mathematics and Statistics,\\
Key Laboratory of Applied Mathematics and Complex Systems,\\
Lanzhou University, Lanzhou, Gansu, 730000, P.R. China}
%
\maketitle

\begin{abstract}
This article is concerned with a nonlocal dispersal susceptible-infected-susceptible (SIS) epidemic model with Neumann boundary condition, where the rates of disease transmission and recovery are assumed to be spatially heterogeneous and the total population number is constant. We first introduce the basic reproduction number $R_0$ and then discuss the existence, uniqueness and stability of steady states of the nonlocal dispersal SIS epidemic model in terms of $R_0$. In particular, we also consider the impacts of the large diffusion rates of the susceptible and infected population on the persistence and extinction of the disease, and these results imply that the nonlocal movement of the susceptible or infected individuals will enhance the persistence of the disease. Additionally, our analytical results also suggest that the spatial heterogeneity tends to boost the spread of the disease. We have to emphasize that the main difficulty is that a lack of regularizing effect occurs.

\textbf{Keywords}: Nonlocal dispersal, The basic reproduction number, Disease-free equilibrium, Endemic equilibrium.

\textbf{AMS Subject Classification (2010)}: 35B40, 45A05, 45F05, 47G20.

\end{abstract}


\section{Introduction}\label{Int}

\noindent

This paper is concerned with the following nonlocal dispersal SIS epidemic model
\begin{equation}\label{101}
\begin{cases}
\frac{\partial S}{\partial t}=d_S\int_{\Omega}J(x-y)(S(y,t)-S(x,t))dy
-\frac{\beta(x)SI}{S+I}+\gamma(x)I,& x\in\Omega,~t>0,\\
\frac{\partial I}{\partial t}=d_I\int_{\Omega}J(x-y)(I(y,t)-I(x,t))dy
+\frac{\beta(x)SI}{S+I}-\gamma(x)I, & x\in\Omega,~t>0,\\
S(x,0)=S_0(x),~I(x,0)=I_0(x), & x\in\Omega,
\end{cases}
\end{equation}
where, $\Omega$ is a bounded domain; $S(x,t)$ and $I(x,t)$ denote the density of susceptible and infected individuals at location $x$ and time $t$, respectively; $d_S$ and $d_I$ are positive diffusion coefficients for the susceptible and infected individuals; $\beta(x)$ and $\gamma(x)$ are positive continuous functions on $\bar{\Omega}$ that represent the rates of disease transmission and recovery at $x$. It is known from \cite{ABL-2008} that the term $\frac{SI}{S+I}$ is a Lipschitz continuous function of $S$ and $I$ in the open first quadrant, we can extend its definition to the entire first quadrant by defining it to be zero when either $S=0$ or $I=0$. Throughout the whole paper, we assume that the initial infected individuals are positive without other description, that is
\begin{equation*}
\int_{\Omega}I(x,0)dx>0,~\text{with}~S_0(x)\ge0~\text{and}
~I_0(x)\ge0~\text{for}~x\in\Omega
\end{equation*}
and the dispersal kernel function $J$ satisfies
\begin{description}
\item(J) $J(x)\in C(\bar{\Omega}),J(0)>0,~J(x)=J(-x)\ge0,
~\int_{\mathbb{R}^N}J(x)dx=1$
and $\int_{\Omega}J(x-y)dy\not\equiv1$ for any $x\in\Omega$.
\end{description}

By the standard theory of semigroups of linear bounded operator \cite{Pazy}, we know from \cite{KLSH2010} that \eqref{101} admits a unique positive solution $(S_*(x,t),I_*(x,t))$, which is continuous with respect to $x$ and $t$. Let
\begin{equation*}
N:=\int_{\Omega}(S_0(x)+I_0(x))dx.
\end{equation*}
If we add the first equation and the second equation of \eqref{101} and integrate it on $\Omega$, then there is
\begin{equation*}
\frac{\partial}{\partial t}\int_{\Omega}(S(x,t)+I(x,t))dx=0~~\text{for all}~t>0.
\end{equation*}
This implies that the total population size is constant, that is
\begin{equation*}
\int_{\Omega}(S(x,t)+I(x,t))dx=N~~\text{for all}~t\ge0.
\end{equation*}

Note that \eqref{101} is the nonlocal counterpart of the following SIS epidemic reaction-diffusion model
\begin{equation}\label{104}
\begin{cases}
\frac{\partial S}{\partial t}=d_S\Delta S-\frac{\beta(x)SI}{S+I}
+\gamma(x)I,~& x\in\Omega, t>0,\\
\frac{\partial I}{\partial t}=d_I\Delta I+\frac{\beta(x)SI}{S+I}
-\gamma(x)I,~& x\in\Omega, t>0,\\
\partial_{\nu}S=\partial_{\nu}I=0,~& x\in\partial\Omega, t>0,
\end{cases}
\end{equation}
in which $\nu$ is the outward unit normal vector on $\partial\Omega$. As is well known, \eqref{104} is first proposed and considered by Allen et al. \cite{ABL-2008}, where they mainly discussed the impact of spatial heterogeneity of environment and movement of individuals on the persistence and extinction of a disease. After that, their results were extended by Peng and Liu \cite{Peng2009}, in which they proved that the endemic equilibrium was globally asymptotically stable if it exists and this result confirms the conjecture proposed in \cite{ABL-2008}. Meanwhile, Peng \cite{Peng20091} provided further understanding regarding the roles of large or small migration rates of the susceptible and infected population on the spatial persistence and extinction of the epidemic disease, and these results were further extended in \cite{Peng2013}. Moreover, Peng and Zhao in \cite{Peng2012} considered \eqref{104} when the rates of disease transmission and recovery are assumed to be spatially heterogeneous and temporally periodic. For other results about SIS epidemic models with spatial heterogeneity, one can also see \cite{ABL-2007,ALL-2009,Huang2010}.

The nonlocal dispersal as a long range process can be better to describe some natural phenomena in many situations \cite{AMRT2010,Fife2003}. Thus, nonlocal dispersal equations have attracted much attention and have been used to model different dispersal phenomena in population ecology, material science and neurology, see \cite{Be2015,HMMV2003,KLSH2010,RN2012,Wang2002,bates,SLY2014}. For the study of the nonlocal problem, we refer to
\cite{CCR2006,CCEM2007,SLY2011,ZLS2010} about the asymptotic behavior,
\cite{SHZH2010,BFRW1997,SLW2010,Coville-2013,PLL2009,LSW2010,LZZ2015} about the
traveling waves and entire solutions when $\Omega=\mathbb{R}$ and
\cite{CDM2008,BZH2007,SLW2012,Coville-2015,YLS2016,Wu2015} about the stationary solutions. In particular, the spectrum properties of nonlocal dispersal operators and their essential difference comparing with the random operators are contained
in \cite{Coville2010,Coville-2013,MR2009,SLY2014,SHZH2010,XS2015}.
Nowadays, the diffusion process is described by an integral operators such as $J*u-u=\int_{\mathbb{R}^N}J(x-y)u(y)dy-u$ in epidemiology, see \cite{
YYLW2013,YLW2015} and the references therein.

The present paper is devoted to the dynamic behavior of system \eqref{101}. It is well-known that the basic reproduction number denoted by $R_0$ is an important threshold parameter to discuss the dynamic behavior of the epidemic models. For system \eqref{101}, it is natural to ask what the basic reproduction number is and how it decides the dynamic behavior of \eqref{101}. As one of the important quantities in epidemiology, the basic reproduction number $R_0$ of an infectious disease is defined to be the expected number of secondary cases produced, in a completely susceptible population, by a typical infective individual (see, e.g., \cite{HSW2005} and the reference therein). For autonomous epidemic models, Diekmann et al.\cite{Diekmann1990} introduced $R_0$ by using the next generation operators. Driessche and Watmough \cite{Driessche2002} established the theory of $R_0$ for compartment ODE models. Thieme \cite{Thieme2009} further developed a general theory of spectral bound and reproduction number for the infinite-dimensional population structure and time heterogeneity. For a nonlocal and time-delayed reaction-diffusion model of dengue fever, Wang and Zhao \cite{WZ2011} gave the definition of $R_0$ via a next generation operator and proved the threshold dynamics in terms of $R_0$. Recently, Wang and Zhao \cite{WZ2012} presented the theory of $R_0$ for reaction-diffusion epidemic models with compartment structure and in particular, characterized $R_0$ by means of the principal eigenvalue of an elliptic eigenvalue problem. In the current study, motivated by the works in \cite{Thieme2009,WZ2011,WZ2012}, we intend to introduce the basic reproduction number $R_0$ for model \eqref{101}, and give its characterization. We further prove that $R_0-1$ has the same sign as
\begin{eqnarray*}
\mu_p(d_I)=\sup_{\scriptstyle \varphi\in L^2(\Omega) \atop\scriptstyle
\varphi\neq0}\frac{-\frac{d_I}{2}\int_{\Omega}\int_{\Omega}
J(x-y)(\varphi(y)-\varphi(x))^2dydx+\int_{\Omega}(\beta(x)-\gamma(x))
\varphi^2(x)dx}{\int_{\Omega}\varphi^2(x)dx},
\end{eqnarray*}
which is of interest by itself. In general, $\mu_p(d_I)$ may not be the principal eigenvalue of the nonlocal operator
\begin{equation*}
\mathcal{M}[u](x):=d_I\int_{\Omega}J(x-y)(u(y)-u(x))dy
+(\beta(x)-\gamma(x))u(x),
\end{equation*}
which may lead some essential difference between the nonlocal dispersal problem and the reaction-diffusion problem, see \cite{HMMV2003,Coville2010,SHZH2010,SLY2014,SLW2012}.
Thus, we may not characterize the basic reproduction number of system \eqref{101} through the eigenvalue problem corresponding to the nonlocal operators. To overcome this difficulty, we use the basic theory developed in \cite{Thieme2009} to give the definition of the basic reproduction number of system \eqref{101}.

After then, we are most concerned with the global stability of the disease-free equilibrium and the endemic equilibrium of system \eqref{101}. It is obtained that the disease-free equilibrium is unique and globally stable when $R_0<1$, which implies that the disease will be extinct in this case. By the sub-super solution method, we give the existence of the endemic equilibrium, and motivated by the method in \cite{Be2005}, obtain its uniqueness as $R_0>1$. Generally, the stability of the endemic equilibrium is very difficult to be proved. When $d_S=d_I$, however, the global stability of the endemic equilibrium can be shown by constructing some auxiliary problems in this paper. On the basis of Lyapunov stability theorem, the global stability of the endemic equilibrium can be also obtained if the rate of disease transmission is proportional to the rate of the disease recovery. These results imply that the disease will be persistent when $R_0>1$. Finally, we discuss the effect of the diffusion rates $d_S$ and $d_I$ on the disease transmission.
Necessarily, we find that the nonlocal movement of the susceptible or infected individuals tends to enhance the persistence of the disease. However,
there are some difficulties for us to overcome when we prove the above results due to the lack of the regularity of solutions of \eqref{101} or the stationary solutions corresponding to system \eqref{101}.

This paper is organized as follows. In Section 2, we give the characterization of the reproduction number of system \eqref{101}. The existence, uniqueness and global stability of the disease-free equilibrium are obtained in Section 3. Section 4 is devoted to the existence, uniqueness and global stability of the endemic equilibrium. In Section 5, we discuss the effect of the diffusion rates associated with the susceptible and infected individuals on the disease transmission. Finally, we give a brief discussion to complete the paper.

\section{The basic reproduction number}
\noindent

In this section, we will give the definition of the basic reproduction number for system \eqref{101} and provide its analytical properties.
Let $X=C(\bar{\Omega})$ be the Banach space of real continuous functions on $\bar{\Omega}$. Throughout this section, $X$ is considered as an ordered Banach space with a positive cone
$X_+=\{u\in X|~u\ge0\}$. It is well-known that $X_+$ is generating, normal and has nonempty interior. Additionally, an operator $T: X\to X$ is called positive if $TX_+\subseteq X_+$.

\begin{definition}
A closed operator $\mathscr{A}$ in $X$ is called resolvent-positive, if the resolvent set of $\mathscr{A}$, $\rho(\mathscr{A})$, contains a ray $(\omega,\infty)$ and $(\lambda I-\mathscr{A})^{-1}$ is a positive operator for all $\lambda>\omega$.
\end{definition}
\begin{definition}
The spectral bound of $\mathscr{A}$ is defined by
$$S(\mathscr{A})=\sup\{Re\lambda|~\lambda\in\sigma(\mathscr{A})\},$$
where $\sigma(\mathscr{A})$ denotes the spectrum of $\mathscr{A}$. The spectral radius of $\mathscr{A}$ is defined as
$$r(\mathscr{A})=\sup\{|\lambda|; ~\lambda\in\sigma(\mathscr{A})\}.$$
\end{definition}

\begin{theorem}[\cite{Thieme2009}]\label{theorem203}
Let $\mathscr{A}$ be the generator of a $C_0-$semigroup $S$ on the ordered Banach space $X$ with a normal and generating cone $X_+$. Then, $\mathscr{A}$ is a resolvent-positive if and only if $S$ is a positive semigroup, i.e., $S(t)X_+\subset X_+$ for all $t\ge0$. If $\mathscr{A}$ is resolvent-positive, then
\begin{equation*}
(\lambda I-\mathscr{A})^{-1}x=\lim_{b\to\infty}\int_{0}^{b}e^{\lambda t}S(t)xdt,~\lambda>S(\mathscr{A}),~x\in X.
\end{equation*}
\end{theorem}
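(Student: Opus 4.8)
The plan is to establish the equivalence through two classical representation formulas for a $C_0$-semigroup and then to derive the resolvent identity separately, the latter being where positivity does the real work. Throughout I write $\omega_0$ for the growth bound of $S$ and $R(\lambda)=(\lambda I-\mathscr{A})^{-1}$.

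First I would record the Laplace representation of the resolvent: since $S$ is a $C_0$-semigroup it has a finite growth bound $\omega_0$, the ray $(\omega_0,\infty)$ lies in $\rho(\mathscr{A})$, and for every $x\in X$ and $\lambda>\omega_0$ one has $R(\lambda)x=\int_0^\infty e^{-\lambda t}S(t)x\,dt$ with the integral converging absolutely. If $S(t)X_+\subseteq X_+$, then for $x\in X_+$ each integrand value $e^{-\lambda t}S(t)x$ lies in $X_+$; since $X_+$ is a closed cone it contains the nonnegative Riemann sums approximating the integral and their norm limit, so $R(\lambda)x\in X_+$, and $\mathscr{A}$ is resolvent-positive with $\omega=\omega_0$. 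For the converse I would invoke the Euler exponential formula, valid for the generator of any $C_0$-semigroup, namely $S(t)x=\lim_{n\to\infty}(n/t)^n R(n/t)^n x$. If $\mathscr{A}$ is resolvent-positive then for large $n$ we have $n/t>\omega$, so $R(n/t)\ge0$ and hence $R(n/t)^n\ge0$ as a composition of positive operators, while the scalar $(n/t)^n>0$; each approximant thus lies in $X_+$, and closedness of $X_+$ yields $S(t)x\in X_+$. This settles the equivalence.

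For the integral formula the point is that the stated range $\lambda>S(\mathscr{A})$ may lie strictly below $\omega_0$, so absolute convergence is no longer automatic and positivity must be used. Fixing $x\in X_+$ and $\lambda>S(\mathscr{A})$, I would integrate the identity $\frac{d}{dt}\bigl(e^{-\lambda t}S(t)R(\lambda)x\bigr)=-e^{-\lambda t}S(t)x$ over $[0,b]$ to obtain the exact relation $\int_0^b e^{-\lambda t}S(t)x\,dt=R(\lambda)x-e^{-\lambda b}S(b)R(\lambda)x$. One first checks that $R(\lambda)\ge0$ persists for all $\lambda>S(\mathscr{A})$, not merely for $\lambda>\omega_0$: the set of such $\lambda$ is open (the Neumann expansion $R(\mu)=\sum_{k\ge0}(\lambda-\mu)^k R(\lambda)^{k+1}$ for $\mu<\lambda$ keeps all terms nonnegative) and closed under norm limits in the connected interval $(S(\mathscr{A}),\infty)\subseteq\rho(\mathscr{A})$. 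Consequently the remainder $g(b):=e^{-\lambda b}S(b)R(\lambda)x$ is nonnegative, the partial integrals increase in $b$ and are bounded above by $R(\lambda)x$, and normality of $X_+$ makes them norm-bounded; it remains to prove $g(b)\to0$.

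This final convergence is the main obstacle, since a monotone order-bounded net need not converge in a general ordered Banach space. The identification of the limit is clean: from $g(b+s)=e^{-\lambda s}S(s)g(b)$, any (weak) cluster point $L$ of the decreasing nonnegative family $g(b)$ satisfies $L=e^{-\lambda s}S(s)L$ for all $s\ge0$, i.e. $S(s)L=e^{\lambda s}L$, whence $\mathscr{A}L=\lambda L$; as $\lambda>S(\mathscr{A})$ lies in $\rho(\mathscr{A})$, the operator $\lambda I-\mathscr{A}$ is injective and $L=0$. To upgrade this to genuine convergence $g(b)\to0$ I would lean on the normality of $X_+$ together with the order-boundedness of $g$; in the concrete space $X=C(\bar{\Omega})$ of the paper a Dini-type argument for the monotone family does the job. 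Once $g(b)\to0$, the partial integrals converge to $R(\lambda)x$, and since $X_+$ is generating every $x\in X$ splits as $x=x_1-x_2$ with $x_1,x_2\in X_+$, so linearity extends the identity from $X_+$ to all of $X$, completing the proof.
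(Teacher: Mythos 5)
First, a point of reference: the paper offers no proof of this statement at all --- it is imported verbatim from \cite{Thieme2009} (and the exponent in the display should read $e^{-\lambda t}$, as you tacitly assume and as the paper itself uses in \eqref{203}). So your proposal can only be measured against Thieme's argument. Your first half --- the equivalence of resolvent positivity with positivity of the semigroup, via the absolutely convergent Laplace representation on $(\omega_0,\infty)$ in one direction and the Euler/Post--Widder formula $S(t)x=\lim_n(n/t)^nR(n/t)^nx$ in the other --- is correct and is the standard route. Likewise, extending $R(\lambda)\ge0$ from $\lambda>\omega_0$ down to all $\lambda>S(\mathscr{A})$ by the Neumann series together with connectedness of $(S(\mathscr{A}),\infty)\subseteq\rho(\mathscr{A})$, and the exact identity $\int_0^be^{-\lambda t}S(t)x\,dt=R(\lambda)x-e^{-\lambda b}S(b)R(\lambda)x$, are right and form the correct skeleton.

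The genuine gap is the step you yourself flag: proving $g(b):=e^{-\lambda b}S(b)R(\lambda)x\to0$ in norm for $S(\mathscr{A})<\lambda\le\omega_0$, which is the entire content of the second assertion (for $\lambda>\omega_0$ there is nothing left to prove). Neither of your suggested repairs closes it. The weak-cluster-point argument presupposes that the bounded family $g(b)$ has weak cluster points, which fails in non-reflexive spaces such as $X=C(\bar\Omega)$ (norm-bounded sets there are not weakly relatively compact), and even if every cluster point were $0$, that alone would not force convergence of the net. The Dini argument presupposes that the decreasing family $g(b)$ converges pointwise to a continuous function --- indeed to $0$ --- which is precisely what must be shown; a decreasing bounded family of continuous functions can have a discontinuous, nonzero infimum. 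The missing ingredient in the actual proof (Thieme, following Arendt's work on resolvent positive operators) is the Pringsheim--Landau theorem for Laplace integrals with nonnegative integrands: for $x\in X_+$ and $x^*\in X_+^*$ the scalar function $t\mapsto\langle x^*,S(t)x\rangle$ is nonnegative and its Laplace transform extends holomorphically to $\{\operatorname{Re}\lambda>S(\mathscr{A})\}$, being $\langle x^*,R(\lambda)x\rangle$ there; Landau's theorem then forces its abscissa of convergence to be at most $S(\mathscr{A})$, so $\langle x^*,g(b)\rangle\to0$ for every positive functional, and normality of the cone (which makes $X_+^*$ generating in $X^*$ and lets one pass from weak to norm convergence for monotone order-bounded nets) completes the argument. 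Without some version of this step your proof of the integral formula is incomplete.
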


\begin{theorem}[\cite{Thieme2009}]\label{theorem204}
Let $\mathscr{B}$ be a resolvent-positive operator on $X$, $S(\mathscr{B})<0$ and $\mathscr{A}=\mathscr{C}+\mathscr{B}$ a positive perturbation of $\mathscr{B}$. If $\mathscr{A}$ is resolvent-positive, $S(\mathscr{A})$ has the same sign as $r(-\mathscr{C}\mathscr{B}^{-1})-1$.
\end{theorem}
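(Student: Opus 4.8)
The plan is to compare $S(\mathscr{A})$ with the threshold value $1$ by introducing the family of \emph{next generation operators} $K_\lambda:=\mathscr{C}(\lambda I-\mathscr{B})^{-1}$ and running a monotonicity argument in $\lambda$. First I would record the positivity facts. Since $S(\mathscr{B})<0$, the ray $(S(\mathscr{B}),\infty)$ lies in $\rho(\mathscr{B})$ and contains $0$; resolvent-positivity of $\mathscr{B}$ then gives $(\lambda I-\mathscr{B})^{-1}\ge0$ for every $\lambda\ge0$, and in particular $-\mathscr{B}^{-1}=(0\cdot I-\mathscr{B})^{-1}\ge0$. As $\mathscr{A}=\mathscr{C}+\mathscr{B}$ is a positive perturbation, $\mathscr{C}\ge0$, so each $K_\lambda$ is a positive operator; in particular $K_0=-\mathscr{C}\mathscr{B}^{-1}\ge0$, so $r(-\mathscr{C}\mathscr{B}^{-1})=r(K_0)$ is meaningful. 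The central algebraic identity is the factorization
\[
\lambda I-\mathscr{A}=\bigl(I-K_\lambda\bigr)(\lambda I-\mathscr{B}),\qquad \lambda>S(\mathscr{B}),
\]
which shows that $\lambda\in\rho(\mathscr{A})$ precisely when $I-K_\lambda$ is boundedly invertible, in which case $(\lambda I-\mathscr{A})^{-1}=(\lambda I-\mathscr{B})^{-1}(I-K_\lambda)^{-1}$.

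Next I would study $\Phi(\lambda):=r(K_\lambda)$ on $[0,\infty)$. Using the resolvent identity $(\lambda_1 I-\mathscr{B})^{-1}-(\lambda_2 I-\mathscr{B})^{-1}=(\lambda_2-\lambda_1)(\lambda_1 I-\mathscr{B})^{-1}(\lambda_2 I-\mathscr{B})^{-1}$ together with the positivity just established, the map $\lambda\mapsto(\lambda I-\mathscr{B})^{-1}$ is nonincreasing in the order of positive operators, whence $0\le K_{\lambda_2}\le K_{\lambda_1}$ for $\lambda_1<\lambda_2$; since $X_+$ is normal, the spectral radius is monotone on positive operators, so $\Phi$ is nonincreasing. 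The formula in Theorem~\ref{theorem203} (or the standard resolvent bound) gives $\|(\lambda I-\mathscr{B})^{-1}\|\to0$ as $\lambda\to\infty$, hence $\Phi(\lambda)\le\|K_\lambda\|\to0$, and $\Phi$ is continuous. Combining continuity and monotonicity, $\Phi$ descends from $\Phi(0)=r(-\mathscr{C}\mathscr{B}^{-1})$ down to $0$, so it crosses the value $1$ at a well-defined threshold.

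The sign comparison then follows from two correspondences. If $\Phi(\lambda)<1$, then $I-K_\lambda$ is invertible with $(I-K_\lambda)^{-1}=\sum_{n\ge0}K_\lambda^{\,n}\ge0$, so $\lambda\in\rho(\mathscr{A})$ with $(\lambda I-\mathscr{A})^{-1}\ge0$; conversely, if $\Phi(\lambda)=1$ then, since the spectral radius of the positive operator $K_\lambda$ lies in $\sigma(K_\lambda)$, the operator $I-K_\lambda$ fails to be invertible and the factorization forces $\lambda\in\sigma(\mathscr{A})$. Together with the Perron-type fact for resolvent-positive operators that $S(\mathscr{A})\in\sigma(\mathscr{A})$, this identifies $S(\mathscr{A})$ with the crossing point of $\Phi$: when $\Phi(0)=r(-\mathscr{C}\mathscr{B}^{-1})<1$ the whole ray $[0,\infty)$ lies in $\rho(\mathscr{A})$, hence $S(\mathscr{A})<0$; when $\Phi(0)>1$ the crossing occurs at some $\lambda^{*}>0$ that belongs to $\sigma(\mathscr{A})$ while $(\lambda^{*},\infty)\subset\rho(\mathscr{A})$, giving $S(\mathscr{A})=\lambda^{*}>0$; and the borderline case $\Phi(0)=1$ yields $S(\mathscr{A})=0$. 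In every case $S(\mathscr{A})$ has the same sign as $r(-\mathscr{C}\mathscr{B}^{-1})-1$.

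The hard part is precisely the two spectral inputs used above, both delicate because no compactness or regularizing effect is available: one must show that the spectral radius of the positive operator $K_\lambda$ actually belongs to its spectrum (rather than invoking Krein--Rutman to produce a positive eigenvector, which would require compactness), and that the spectral bound $S(\mathscr{A})$ of the resolvent-positive operator $\mathscr{A}$ lies in $\sigma(\mathscr{A})$. These are exactly the places where the resolvent-positivity theory of Thieme is essential, and I would import them from that framework rather than attempt a purely lattice-theoretic Krein--Rutman argument.
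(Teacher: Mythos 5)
The paper does not prove this statement at all: Theorem~\ref{theorem204} is imported verbatim from Thieme's paper \cite{Thieme2009}, so there is no in-paper proof to compare against. Your reconstruction does follow the strategy of the cited source: the factorization $\lambda I-\mathscr{A}=(I-K_\lambda)(\lambda I-\mathscr{B})$ with $K_\lambda=\mathscr{C}(\lambda I-\mathscr{B})^{-1}$, the monotonicity of $\lambda\mapsto K_\lambda$ from the resolvent identity, the equivalence $\lambda\in\rho(\mathscr{A})\iff I-K_\lambda$ invertible, and the two Perron-type inputs ($r(K)\in\sigma(K)$ for positive $K$, and $S(\mathscr{A})\in\sigma(\mathscr{A})$ for resolvent-positive $\mathscr{A}$) are exactly the ingredients of Thieme's argument, and you are right that these, not compactness, are where the real work lives.

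The one step that does not hold up as written is the claim that $\Phi(\lambda)=r(K_\lambda)$ is \emph{continuous}, which you then use as an intermediate-value argument to locate the crossing $\lambda^{*}$ with $\Phi(\lambda^{*})=1$. Norm continuity of $\lambda\mapsto K_\lambda$ only gives upper semicontinuity of the spectral radius; lower semicontinuity can fail even for positive operators, so a priori $\Phi$ could jump downward across the value $1$, and your identification of $S(\mathscr{A})$ with the crossing point would then break (e.g.\ one could have $\Phi(0)>1$ yet $\Phi(\lambda)<1$ for all $\lambda>0$, which your argument cannot exclude). Thieme's proof avoids this entirely by replacing the intermediate-value argument with the characterization of the spectral bound of a resolvent-positive operator: $\lambda>S(\mathscr{A})$ if and only if $\lambda\in\rho(\mathscr{A})$ and $(\lambda I-\mathscr{A})^{-1}\ge0$. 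With that in hand the three sign cases become direct implications: $r(K_0)<1$ gives $(I-K_0)^{-1}=\sum_{n\ge0}K_0^n\ge0$, hence $0\in\rho(\mathscr{A})$ with positive resolvent and $S(\mathscr{A})<0$; conversely $S(\mathscr{A})<0$ gives $(I-K_0)^{-1}=I+\mathscr{C}(-\mathscr{A})^{-1}\ge0$, whence $r(K_0)\le1$ by the Neumann-series bound in a normal cone and $r(K_0)\ne1$ since $r(K_0)\in\sigma(K_0)$; the strict positive case is handled by noting $r(K_\lambda)\ge1$ for all $\lambda\in(0,S(\mathscr{A}))$ together with monotonicity. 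You should either import the positivity characterization of $S(\mathscr{A})$ and rerun the cases this way, or supply a genuine proof of the continuity you assert; as it stands that assertion is the gap.
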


For applying the basic theory in \cite{Thieme2009} to discuss the basic reproduction number of system \eqref{101}, we first consider the eigenvalue problem
\begin{equation}\label{201}
\mathcal{M}[u](x):=d_I\int_{\Omega}J(x-y)(u(y)-u(x))dy+\beta(x)u(x)-\gamma(x)u(x)
=-\lambda u(x)~~\text{in}~\Omega,
\end{equation}
which will be also used to obtain the main result in this section.
Define
\begin{equation*}
\lambda_p(d_I)=\inf_{\scriptstyle \varphi\in L^2(\Omega) \atop\scriptstyle
\varphi\neq0}\frac{\frac{d_I}{2}\int_{\Omega}\int_{\Omega}
J(x-y)(\varphi(y)-\varphi(x))^2dydx+\int_{\Omega}(\gamma(x)-\beta(x))
\varphi^2(x)dx}{\int_{\Omega}\varphi^2(x)dx}.
\end{equation*}
It is well-known that $\lambda_p(d_I)$ can be the unique principal eigenvalue of \eqref{201}, see \cite{Coville2010,HMMV2003,SLY2014,SHZH2010}.
\begin{lemma}\label{lemma207}
Set $m(x)=-d_I\int_{\Omega}J(x-y)dy+\beta(x)-\gamma(x)$. Suppose there is some $x_0\in Int(\Omega)$ satisfying that $m(x_0)=\max_{\bar{\Omega}}m(x)$, and the partial derivatives of $m(x)$ up to order $N-1$ at $x_0$ are zero. Then, $\lambda_{p }(d_I)$ is the unique principal eigenvalue of \eqref{201} and its corresponding eigenfunction $\varphi$ is positive and continuous on $\bar{\Omega}$.
\end{lemma}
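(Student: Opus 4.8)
The plan is to read \eqref{201} as a spectral problem for the bounded self-adjoint operator on $L^2(\Omega)$ obtained by writing $\mathcal{M}=\mathcal{K}+m(x)\cdot$, where $\mathcal{K}\varphi=d_I\int_\Omega J(x-y)\varphi(y)dy$ is a compact (Hilbert--Schmidt) convolution operator and $m(x)\cdot$ is multiplication by the continuous function $m$. A symmetrization of the double integral (using $J(x-y)=J(y-x)$) shows that $\langle-\mathcal{M}\varphi,\varphi\rangle$ equals the numerator of the Rayleigh quotient defining $\lambda_p(d_I)$, so that $\lambda_p(d_I)=\inf\sigma(-\mathcal{M})=-\sup\sigma(\mathcal{M})$. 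Since $\mathcal{K}$ is compact, Weyl's theorem on stability of the essential spectrum gives $\sigma_{\mathrm{ess}}(\mathcal{M})=\sigma_{\mathrm{ess}}(m\cdot)=m(\bar\Omega)$, whose supremum is $m(x_0)=\max_{\bar\Omega}m$. Hence it suffices to prove the strict inequality $\sup\sigma(\mathcal{M})>m(x_0)$: this forces $\sup\sigma(\mathcal{M})$ to be an isolated eigenvalue of finite multiplicity lying above the essential spectrum, i.e. $\lambda_p(d_I)=-\sup\sigma(\mathcal{M})$ is an isolated eigenvalue of $-\mathcal{M}$ lying strictly below $\inf\sigma_{\mathrm{ess}}(-\mathcal{M})$, which is exactly the assertion that it is the principal eigenvalue of \eqref{201}.

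The crux is this strict spectral-gap inequality, and it is where the flatness hypothesis enters. I would fix $\delta>0$ with $\overline{B_\delta(x_0)}\subset\Omega$ and, using $J(0)>0$ and continuity, $J(x-y)\ge J(0)/2$ for $x,y\in B_\delta(x_0)$. Because the partial derivatives of $m$ up to order $N-1$ vanish at the interior maximum $x_0$, Taylor's theorem yields $0\le m(x_0)-m(x)\le C|x-x_0|^N$ on $B_\delta(x_0)$, so that $\int_{B_\delta(x_0)}\frac{dx}{m(x_0)-m(x)}\ge C^{-1}\int_{B_\delta(x_0)}\frac{dx}{|x-x_0|^N}=+\infty$. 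I would then test the Rayleigh quotient of $\mathcal{M}$ with the family $\varphi_\epsilon=(m(x_0)-m(x)+\epsilon)^{-1}$ on $B_\delta(x_0)$ (and $0$ elsewhere), which lies in $L^2$ for each $\epsilon>0$. Using $\frac{t}{(t+\epsilon)^2}\le\frac1{t+\epsilon}$ one gets $\int(m(x_0)-m(x))\varphi_\epsilon^2\le\int\varphi_\epsilon$, while the convolution term is bounded below by $d_I\frac{J(0)}2(\int\varphi_\epsilon)^2$; hence
\[
\langle\mathcal{M}\varphi_\epsilon,\varphi_\epsilon\rangle-m(x_0)\|\varphi_\epsilon\|_2^2\ \ge\ \int\varphi_\epsilon\Big(d_I\tfrac{J(0)}2\int\varphi_\epsilon-1\Big).
\]
Since $\int\varphi_\epsilon\to+\infty$ as $\epsilon\to0^+$, the right-hand side is positive for small $\epsilon$, giving $\sup\sigma(\mathcal{M})\ge\langle\mathcal{M}\varphi_\epsilon,\varphi_\epsilon\rangle/\|\varphi_\epsilon\|_2^2>m(x_0)$, as required.

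Finally I would recover the qualitative properties of the eigenfunction. Writing the eigenfunction $\varphi$ associated with $\lambda_1:=\sup\sigma(\mathcal{M})$ as $\varphi(x)=d_I\int_\Omega J(x-y)\varphi(y)dy/(\lambda_1-m(x))$, the bound $\lambda_1>m(x_0)\ge m(x)$ keeps the denominator bounded away from $0$, so this representation upgrades $\varphi\in L^2$ to $\varphi\in C(\bar\Omega)$. Replacing $\varphi$ by $|\varphi|$ does not decrease the Rayleigh quotient, so the eigenfunction may be taken nonnegative, and then the same representation together with $J(0)>0$ and the connectedness of $\Omega$ forces $\varphi>0$ on $\bar\Omega$ by a standard propagation argument (the zero set of $\varphi$ is simultaneously open and closed). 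Uniqueness of the principal eigenvalue follows from self-adjointness: eigenfunctions for distinct eigenvalues are $L^2$-orthogonal, and no function orthogonal to a strictly positive one can keep a constant sign, so $\lambda_p(d_I)$ is the only eigenvalue admitting a positive eigenfunction. The main obstacle is the strict inequality of the middle paragraph; everything else is the routine compact-plus-multiplication spectral picture and a positivity bootstrap.
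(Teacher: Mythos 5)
The paper itself gives no proof of this lemma: it is quoted as a known fact, with the preceding sentence deferring to \cite{Coville2010,HMMV2003,SLY2014,SHZH2010}. Your argument is a correct, self-contained reconstruction of precisely the standard proof in those references: identify $\lambda_p(d_I)$ with $-\sup\sigma(\mathcal{M})$ for the bounded self-adjoint operator $\mathcal{M}=\mathcal{K}+m$, locate $\sigma_{\mathrm{ess}}(\mathcal{M})=m(\bar\Omega)$ by compactness of $\mathcal{K}$ and Weyl's theorem, and open a spectral gap above $m(x_0)$ with the test functions $(m(x_0)-m(x)+\epsilon)^{-1}$, whose $L^1$ norms blow up exactly because the flatness hypothesis makes $1/(m(x_0)-m(\cdot))$ non-integrable near $x_0$; the continuity and positivity bootstrap through $\varphi(x)=d_I\int_\Omega J(x-y)\varphi(y)\,dy/(\lambda_1-m(x))$ and the orthogonality argument for uniqueness of the eigenvalue carrying a positive eigenfunction are likewise the standard ones, and all the inequalities you use check out. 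The one soft spot is the Taylor step: to conclude $m(x_0)-m(x)\le C|x-x_0|^N$ you implicitly need $m$ to be of class $C^N$ near $x_0$ (for a merely $C^{N-1}$ function, vanishing of the derivatives up to order $N-1$ only gives $m(x_0)-m(x)=o(|x-x_0|^{N-1})$, whose reciprocal is integrable in dimension $N$, so non-integrability would not follow). This extra regularity is tacit in the lemma as stated and in all of the cited sources, so it is an imprecision inherited from the statement rather than a gap in your argument; it would be worth making the $C^N$ assumption explicit.
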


\begin{lemma}\label{newlem}
$\lambda_p(d_I)$ is the principal eigenvalue of \eqref{201} if and only if $$\lambda_p(d_I)<\min_{\bar{\Omega}}
\left\{d_I\int_{\Omega}J(x-y)dy+\gamma(x)-\beta(x)\right\}.$$
\end{lemma}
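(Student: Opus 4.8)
The plan is to recast \eqref{201} as a spectral problem for a bounded self-adjoint operator on $L^2(\Omega)$ and to read the stated inequality as a spectral gap above the essential spectrum. Since $J(x-y)=J(y-x)$, the operator $\mathcal{M}$ extends to a bounded self-adjoint operator on $L^2(\Omega)$; writing $a(x):=d_I\int_{\Omega}J(x-y)dy+\gamma(x)-\beta(x)$ we have $\mathcal{M}=K-M_a$, where $K[u](x)=d_I\int_{\Omega}J(x-y)u(y)dy$ and $M_a$ denotes multiplication by $a$. The symmetrization identity $\int_{\Omega}\int_{\Omega}J(x-y)(\varphi(y)-\varphi(x))\varphi(x)\,dy\,dx=-\tfrac12\int_{\Omega}\int_{\Omega}J(x-y)(\varphi(y)-\varphi(x))^2\,dy\,dx$ shows that the numerator defining $\lambda_p(d_I)$ equals $-\langle\mathcal{M}\varphi,\varphi\rangle$, so that $-\lambda_p(d_I)=\sup_{\varphi\neq0}\langle\mathcal{M}\varphi,\varphi\rangle/\|\varphi\|_{L^2}^2=\sup\sigma(\mathcal{M})$. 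In these terms the asserted inequality $\lambda_p(d_I)<\min_{\bar{\Omega}}a$ is exactly $\sup\sigma(\mathcal{M})>-\min_{\bar{\Omega}}a$.

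Next I would locate the essential spectrum. Because $J\in C(\bar{\Omega})$ and $\Omega$ is bounded, $K$ is Hilbert--Schmidt, hence compact; by Weyl's theorem $\sigma_{ess}(\mathcal{M})=\sigma_{ess}(-M_a)=-[\min_{\bar{\Omega}}a,\max_{\bar{\Omega}}a]$, whose top is $-\min_{\bar{\Omega}}a$. In particular $\sup\sigma(\mathcal{M})\ge-\min_{\bar{\Omega}}a$ always, i.e. $\lambda_p(d_I)\le\min_{\bar{\Omega}}a$; the content of the lemma is therefore that the principal eigenvalue exists precisely when this inequality is strict.

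For the implication $\lambda_p(d_I)<\min_{\bar{\Omega}}a\Rightarrow$ principal eigenvalue, note that then $\sup\sigma(\mathcal{M})$ lies strictly above $\sigma_{ess}(\mathcal{M})$; since $\mathcal{M}$ is self-adjoint, $-\lambda_p(d_I)=\sup\sigma(\mathcal{M})$ is an isolated eigenvalue of finite multiplicity, so the infimum defining $\lambda_p(d_I)$ is attained by some $\varphi$. Replacing $\varphi$ by $|\varphi|$ does not increase the quotient, because $(|\varphi(y)|-|\varphi(x)|)^2\le(\varphi(y)-\varphi(x))^2$ while the $L^2$ norm and the $(\gamma-\beta)$-term are unchanged, so $|\varphi|\ge0$ is again a minimizer, hence a nonnegative eigenfunction satisfying $u(x)=d_I\int_{\Omega}J(x-y)u(y)dy/(a(x)-\lambda_p(d_I))$ with $a-\lambda_p(d_I)>0$. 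The right-hand side is continuous in $x$, giving a continuous representative; positivity then follows by irreducibility, since $J(0)>0$ makes $J$ strictly positive near the diagonal, so the set $\{u>0\}$ is open and, using the connectedness of $\Omega$, propagates to all of $\bar{\Omega}$. Thus $\lambda_p(d_I)$ is the principal eigenvalue with a positive continuous eigenfunction.

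The converse is the easy direction: if $\lambda_p(d_I)$ is the principal eigenvalue with positive continuous eigenfunction $u$, evaluate $\mathcal{M}u=-\lambda_p(d_I)u$ at a point $x_0\in\bar{\Omega}$ with $a(x_0)=\min_{\bar{\Omega}}a$ to get $(a(x_0)-\lambda_p(d_I))u(x_0)=d_I\int_{\Omega}J(x_0-y)u(y)dy$; the right-hand side is strictly positive (as $J(0)>0$, $J\ge0$ continuous and $u>0$) and $u(x_0)>0$, whence $\lambda_p(d_I)<\min_{\bar{\Omega}}a$. The main obstacle is the forward direction, and within it the verification that the extremal eigenfunction is genuinely positive (not merely nonnegative) and continuous---exactly the points where the absence of a regularizing effect is felt; above the essential spectrum both are handled by the representation formula together with the strict positivity of $J$ near the diagonal.
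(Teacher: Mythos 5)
Your argument is correct, but it takes a genuinely different route from the one the paper points to. The paper gives no proof of Lemma \ref{newlem}; it defers to Proposition 3.2 of \cite{Coville-2013}, whose argument is of Krein--Rutman type: for $\lambda<\min_{\bar{\Omega}}a$ one studies the compact, positive operator $u\mapsto d_I\int_{\Omega}J(x-y)u(y)dy/(a(x)-\lambda)$ on $C(\bar{\Omega})$, tracks its spectral radius as a function of $\lambda$, and detects the principal eigenvalue as the solution of $r(\lambda)=1$; necessity comes from evaluating the eigenvalue equation at a minimum point of $a$, as you also do. You instead exploit the symmetry $J(x)=J(-x)$ to view $\mathcal{M}=K-M_a$ as a bounded self-adjoint operator, identify $\lambda_p(d_I)=-\sup\sigma(\mathcal{M})$ via the symmetrization identity, and use compactness of $K$ plus Weyl's theorem to turn the stated inequality into the condition that $\sup\sigma(\mathcal{M})$ lies strictly above the essential spectrum; attainment of the Rayleigh quotient, the $|\varphi|$ trick, and the representation $u=Ku/(a-\lambda_p)$ then give the positive continuous eigenfunction. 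This is a clean and self-contained alternative, and it has the added benefit of explaining \emph{why} $\lambda_p(d_I)\le\min_{\bar{\Omega}}a$ always holds (the top of the essential spectrum is an obstruction); its cost is that it uses the symmetry of $J$ essentially, whereas the Krein--Rutman route survives for non-symmetric kernels. Two small points to make explicit: the positivity propagation of $\{u>0\}$ uses connectedness of $\Omega$ (implicit in ``bounded domain''), and your converse uses that the principal eigenfunction is continuous (hence finite) at the minimum point of $a$ --- this matches the paper's convention in Lemma \ref{lemma207}, but for a merely a.e.\ positive $L^2$ eigenfunction the evaluation argument would need the extra step that $u=Ku/(a-\lambda_p)$ blows up near that point, contradicting local boundedness.
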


The proof of Lemma \ref{newlem} is the same as Proposition 3.2 in \cite{Coville-2013}.

\begin{remark}\label{re208}
Note that $\lambda_p(d_I)$ is continuous on $J$, $\beta(x)$ and $\gamma(x)$, see the proof in \cite{Coville2010}.
\end{remark}

\begin{theorem}\label{theorem2203}
Assume $\lambda_{p }(d_I)$ is the principal eigenvalue of \eqref{201}. Then the following alternatives hold:
\begin{description}
\item[(i)] $\lambda_{p }(d_I)$ is a strictly monotone increasing function of $d_I$;
\item[(ii)] $\lambda_{p }(d_I)\to\min\limits_{\bar{\Omega}}\{\gamma(x)-\beta(x)\}$ as $d_I\to0$;
\item[(iii)] $\lambda_{p }(d_I)\to\frac{1}{|\Omega|}\int_{\Omega}(\gamma(x)-\beta(x))dx$ as $d_I\to+\infty$;
\item[(iv)] If $\int_{\Omega}\beta(x)dx\ge\int_{\Omega}\gamma(x)dx$, then $\lambda_{p }(d_I)<0$ for all $d_I>0$;
\item[(v)] If $\beta(x_*)>\gamma(x_*)$ for some $x_*\in\Omega$ and $\int_{\Omega}\beta(x)dx<\int_{\Omega}\gamma(x)dx$, then the equation $\lambda_{p }(d_I)=0$ has a unique positive root denoted by $d_{I}^{*}$. Furthermore, if $d_I<d_I^*$, then $\lambda_{p }(d_I)<0$ and if $d_I>d_I^*$, then $\lambda_{p }(d_I)>0$.
\end{description}
\end{theorem}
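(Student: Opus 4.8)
The plan is to base everything on the Rayleigh quotient underlying the variational formula for $\lambda_p(d_I)$. Writing $[\varphi]^2:=\frac12\int_\Omega\int_\Omega J(x-y)(\varphi(y)-\varphi(x))^2\,dy\,dx\ge0$ and $R_{d_I}[\varphi]:=\big(d_I[\varphi]^2+\int_\Omega(\gamma-\beta)\varphi^2\big)/\int_\Omega\varphi^2$, so that $\lambda_p(d_I)=\inf_{\varphi\neq0}R_{d_I}[\varphi]$, the crucial structural observation is that for each fixed $\varphi$ the map $d_I\mapsto R_{d_I}[\varphi]$ is affine with nonnegative slope $[\varphi]^2/\int_\Omega\varphi^2$. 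Consequently $\lambda_p$ is an infimum of nondecreasing affine functions of $d_I$, hence nondecreasing and concave on $(0,\infty)$; concavity yields continuity on $(0,\infty)$ for free, which I will need in (v).

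For (i), monotonicity is immediate from this remark. For the strict inequality at $d_1<d_2$, I would test the $d_1$-quotient against the principal eigenfunction $\varphi_{d_2}>0$ supplied by Lemma \ref{lemma207}, giving $\lambda_p(d_1)\le R_{d_1}[\varphi_{d_2}]=\lambda_p(d_2)-(d_2-d_1)[\varphi_{d_2}]^2/\int_\Omega\varphi_{d_2}^2$; it then suffices to check $[\varphi_{d_2}]^2>0$. This holds because $\varphi_{d_2}$ cannot be constant — a constant eigenfunction would force $\beta-\gamma$ to be constant, the degenerate homogeneous case — while $J(0)>0$ and continuity make $J(x-y)>0$ near the diagonal, where a nonconstant continuous $\varphi_{d_2}$ genuinely varies.

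For (ii) I would squeeze: discarding the nonnegative term $d_I[\varphi]^2$ gives $\lambda_p(d_I)\ge\min_{\bar\Omega}(\gamma-\beta)$ for all $d_I$, while for each fixed $\varphi$ one has $\lambda_p(d_I)\le R_{d_I}[\varphi]\to\int_\Omega(\gamma-\beta)\varphi^2/\int_\Omega\varphi^2$ as $d_I\to0$; infimizing the limit over $\varphi$ (concentrating mass near a minimum point of $\gamma-\beta$) returns $\min_{\bar\Omega}(\gamma-\beta)$, closing the squeeze. The same constant test function $\varphi\equiv1$ gives the easy half of (iii), $\lambda_p(d_I)\le\frac1{|\Omega|}\int_\Omega(\gamma-\beta)$ for every $d_I$, and, in (iv), $\lambda_p(d_I)\le\frac1{|\Omega|}\int_\Omega(\gamma-\beta)\le0$ when $\int_\Omega\beta\ge\int_\Omega\gamma$; strict negativity in the borderline case $\int_\Omega\beta=\int_\Omega\gamma$ I would recover by perturbing the constant along $\psi=\gamma-\beta$, since the first-order term $2\varepsilon\int_\Omega(\gamma-\beta)^2$ can be made negative for $\varepsilon<0$ small.

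The hard part is the lower bound in (iii), namely $\liminf_{d_I\to\infty}\lambda_p(d_I)\ge\frac1{|\Omega|}\int_\Omega(\gamma-\beta)$, and this is exactly where the lack of a regularizing effect bites. Taking the normalized principal eigenfunction $\varphi_{d_I}$ (which exists for large $d_I$ by Lemma \ref{newlem}, since the right-hand threshold there tends to $+\infty$), the bound $\lambda_p(d_I)\le\frac1{|\Omega|}\int_\Omega(\gamma-\beta)$ forces $[\varphi_{d_I}]^2\le C/d_I\to0$. One would like to conclude that $\varphi_{d_I}$ converges strongly in $L^2$ to a constant, but there is no Rellich-type compactness for the nonlocal form, so weak compactness alone cannot pass to the limit in the quadratic $\int_\Omega(\gamma-\beta)\varphi_{d_I}^2$. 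My plan is to install a nonlocal Poincaré inequality $[\varphi]^2\ge c\int_\Omega(\varphi-\langle\varphi\rangle)^2$, with $\langle\varphi\rangle$ the mean, valid because $J(0)>0$ makes $J$ bounded below near the diagonal and $\Omega$ is connected, so a covering/chaining argument controls the variance. This upgrades $[\varphi_{d_I}]^2\to0$ to $\varphi_{d_I}\to|\Omega|^{-1/2}$ strongly in $L^2$, whence $\int_\Omega(\gamma-\beta)\varphi_{d_I}^2\to\frac1{|\Omega|}\int_\Omega(\gamma-\beta)$ and, since $\lambda_p(d_I)\ge\int_\Omega(\gamma-\beta)\varphi_{d_I}^2$, the desired lower bound. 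Finally (v) follows by assembling the pieces: by (ii) and $\beta(x_*)>\gamma(x_*)$ we have $\lambda_p(d_I)\to\min_{\bar\Omega}(\gamma-\beta)<0$ as $d_I\to0$, by (iii) and $\int_\Omega\beta<\int_\Omega\gamma$ we have $\lambda_p(d_I)\to\frac1{|\Omega|}\int_\Omega(\gamma-\beta)>0$ as $d_I\to\infty$, and since $\lambda_p$ is continuous and strictly increasing by (i), the intermediate value theorem yields a unique root $d_I^*$ with the stated sign pattern.
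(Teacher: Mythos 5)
Your proof is correct, and in parts (ii) and (iii) it takes a genuinely different route from the paper's. For (ii), the paper proves the upper bound $\limsup_{d_I\to0}\lambda_p(d_I)\le\min_{\bar\Omega}(\gamma-\beta)$ by contradiction, building a sub-/super-solution pair on a small ball around a minimizer of $\gamma-\beta$ and deriving an impossible identity for the Dirichlet principal eigenvalue of $\int_{\mathbb{R}^N}J(x-y)(u(y)-u(x))dy$ on that ball; your squeeze — fixing $\varphi$, letting $d_I\to0$ in the Rayleigh quotient, and then infimizing the limit by concentrating mass — is considerably shorter, avoids the eigenfunction entirely, and is correct. For (iii), the paper never invokes a Poincar\'e inequality: it exploits the eigenvalue equation itself, rewriting it as the fixed-point relation $\psi_{d_I}(x)=\int_\Omega J(x-y)\psi_{d_I}(y)dy\,/\,\bigl(\int_\Omega J(x-y)dy+(\gamma-\beta-\lambda_p)/d_I\bigr)$, which upgrades weak $L^2$ convergence of a subsequence to uniform convergence, identifies the limit as a constant via Proposition 3.3 of the Andreu-Vaillo et al. monograph, and then passes to the limit in the integrated eigenvalue equation. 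Your route instead uses the spectral gap $\alpha=\inf\{\frac12\int\int J(x-y)(u(y)-u(x))^2/\int u^2:\int_\Omega u=0\}>0$ to force strong $L^2$ convergence of the normalized eigenfunctions to $|\Omega|^{-1/2}$; this inequality is exactly the quantity \eqref{3003} that the paper itself borrows from Proposition 3.4 and Lemma 3.5 of the same monograph in the proof of Theorem \ref{theorem302}, so it is an available tool, and your argument buys strong convergence directly where the paper has to bootstrap through the equation. Part (i) is essentially the paper's argument (test the smaller-parameter quotient with the larger-parameter eigenfunction; both of you implicitly exclude the degenerate case $\gamma-\beta\equiv\mathrm{const}$, where strict monotonicity genuinely fails). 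Your perturbation $\varphi=1+\varepsilon(\gamma-\beta)$ for the borderline case $\int_\Omega\beta=\int_\Omega\gamma$ of (iv), and your observation that concavity (as an infimum of affine functions of $d_I$) supplies the continuity needed for the intermediate value argument in (v), both fill in steps the paper dismisses as obvious.
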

\begin{proof}
Let $\varphi(x)$ be the corresponding eigenfunction to $\lambda_p(d_I)$ and normalize it as $\|\varphi\|_{L^2(\Omega)}=1$. Then
\begin{equation*}
\lambda_p(d_I)=\frac{d_I}{2}\int_{\Omega}\int_{\Omega}
J(x-y)(\varphi(y)-\varphi(x))^2dydx+\int_{\Omega}
(\gamma(x)-\beta(x))\varphi^2(x)dx.
\end{equation*}
Obviously, $\varphi(x)$ is not a constant. Otherwise, $\lambda_p(d_I)=\gamma(x)-\beta(x)$ according to \eqref{201}, which is a contradiction. Assume $d_I>d_{I_1}$. Then, following the variational characterization of $\lambda_p(d_I)$, we have
\begin{equation*}
\lambda_p(d_I)>\frac{d_{I_1}}{2}\int_{\Omega}\int_{\Omega}
J(x-y)(\varphi(y)-\varphi(x))^2dydx+\int_{\Omega}
(\gamma(x)-\beta(x))\varphi^2(x)dx\ge\lambda_p(d_{I_1}).
\end{equation*}
This ends the proof of (i).

Now, let $\theta(x)=\gamma(x)-\beta(x)$ and $\theta_{min}=\min\limits_{\bar{\Omega}}\theta(x)$. Consider the eigenvalue problem
\begin{equation}\label{205}
d_I\int_{\Omega}
J(x-y)(u(y)-u(x))dy-\theta_{min}u(x)=-\lambda u(x)~\text{in}~\Omega.
\end{equation}
Thus, the principal eigenvalue of \eqref{205} is $\lambda_p^*=\theta_{min}$, see \cite{MR2009na}. Hence, we have $\lambda_p(d_I)\ge\theta_{min}$. Now, if we can prove that $\limsup\limits_{d_I\to0}\lambda_p(d_I)\le\theta_{min}$, then the result is obtained. On the contrary, assume there exists some $\varepsilon>0$ such that
$$\limsup_{d_I\to0}\lambda_p(d_I)\ge\theta_{min}+\varepsilon.$$
By the definition of $\limsup$, there exists some $\hat{d}_I>0$ such that if $d_I\le\hat{d}_I$, then
$$\lambda_p(d_I)\ge\theta_{min}+\frac\varepsilon 2.$$
Additionally, the continuity of $\theta(x)$ gives that there are some $x_0\in\Omega$ and $r>0$ such that
\begin{equation*}
\theta_{min}\ge \theta(x)-\frac\varepsilon 4~~\text{for}~x\in B_r(x_0)\subset\Omega.
\end{equation*}
Hence, $\lambda_p(d_I)\ge\theta(x)+\frac\varepsilon 4$ for $d_I\le\hat{d}_I$ and $x\in B_r(x_0)$. Let $\varphi(x)$ be the eigenfunction to $\lambda_p(d_I)$. Then, it follows from \eqref{201} that
\begin{equation*}
\int_{\Omega}J(x-y)(\varphi(x)-\varphi(y))dy
=\frac{\lambda_p(d_I)-\theta(x)}{d_I}\varphi(x)
\ge\frac{\varepsilon}{4d_I}\varphi(x)~~\text{in}~~B_r(x_0).
\end{equation*}
Let $\lambda_1$ be the principal eigenvalue of the linear problem
\begin{equation*}
\begin{cases}
\int_{\mathbb{R}^N}J(x-y)(u(y)-u(x))dy=-\lambda u(x)~&\text{in}~B_r(x_0),\\
u(x)=0~&\text{in}~\mathbb{R}^N\backslash B_r(x_0).
\end{cases}
\end{equation*}
It is well-known that $0<\lambda_1<1$, see \cite{MR2009}. Let $\psi(x)$ be the corresponding eigenfunction to $\lambda_1$ normalized by $\|\psi\|_{L^\infty(B_r(x_0))}=1$. Set
\begin{equation*}
\overline{\Phi}(x)=\frac{\varphi(x)}{\inf_{B_r(x_0)}\varphi(x)},~~~
\underline{\Phi}(x)=\psi(x)\le1.
\end{equation*}
Consider the following linear problem
\begin{equation}\label{206}
\begin{cases}
\int_{\mathbb{R}^N}J(x-y)(u(y)-u(x))dy=-\frac{\varepsilon}{4d_I} u(x)~&\text{in}~B_r(x_0),\\
u(x)=0~&\text{in}~\mathbb{R}^N\backslash B_r(x_0).
\end{cases}
\end{equation}
By the direct computation, we have
\begin{equation*}
\begin{aligned}
&\int_{B_r(x_0)}J(x-y)\overline{\Phi}(y)dy-\overline{\Phi}(x)
+\frac{\varepsilon}{4d_I}\overline{\Phi}(x)\\
\le& \int_{\Omega}J(x-y)(\overline{\Phi}(y)-\overline{\Phi}(x))dy
+\frac{\varepsilon}{4d_I}\overline{\Phi}(x)\le0
\end{aligned}
\end{equation*}
and
\begin{equation*}
\begin{aligned}
&\int_{B_r(x_0)}J(x-y)\underline{\Phi}(y)dy-\underline{\Phi}(x)
+\frac{\varepsilon}{4d_I}\underline{\Phi}(x)\\
=& \int_{B_r(x_0)}J(x-y)\psi(y)dy-\psi(x)
+\frac{\varepsilon}{4d_I}\psi(x)\ge0
\end{aligned}
\end{equation*}
provided $d_I\le\min\{\hat{d}_I, \frac{\varepsilon}{4\lambda_1}\}$. Then, by the super-sub solution method, \eqref{206} admits a positive solution between $\underline{\Phi}(x)$ and $\overline{\Phi}(x)$, which implies that $\lambda_1=\frac{\varepsilon}{4d_I}$. This contradicts to the independence of $d_I$ about $\lambda_1$. Thus, $\lim\limits_{d_I\to0}\lambda_p(d_I)=\theta_{min}$. This completes the proof of (ii).

Now, taking $\varphi^2=\frac{1}{|\Omega|}$, the definition of $\lambda_p(d_I)$ yields that
\begin{equation*}
\begin{aligned}
\lambda_p(d_I)&\le\frac{\frac{d_I}{2}\int_{\Omega}\int_{\Omega}
J(x-y)(\varphi(y)-\varphi(x))^2dydx+\int_{\Omega}(\gamma(x)-\beta(x))
\varphi^2(x)dx}{\int_{\Omega}\varphi^2(x)dx}\\
&=\frac{1}{|\Omega|}\int_{\Omega}(\gamma(x)-\beta(x))dx\le
\max_{\bar{\Omega}}(\gamma(x)-\beta(x)).
\end{aligned}
\end{equation*}
Since $\lambda_p(d_I)$ is strictly increasing on $d_I$, the limit of $\lambda_p(d_I)$ exists as $d_I\to+\infty$. Assume $\lim\limits_{d_I\to+\infty}\lambda_p(d_I)=\lambda_\infty$. Then, $\lambda_\infty\le\max_{\bar{\Omega}}(\gamma(x)-\beta(x))$. Letting $\psi_{d_I}(x)$ be the corresponding eigenfunction to $\lambda_p(d_I)$ and normalizing it by $\|\psi_{d_I}\|_{L^\infty(\Omega)}=1$, then we have
\begin{equation}\label{207}
d_I\int_{\Omega}J(x-y)(\psi_{d_I}(y)-\psi_{d_I}(x))dy
+(\beta(x)-\gamma(x))\psi_{d_I}(x)=-\lambda_p(d_I)\psi_{d_I}(x)
~~\text{in}~\Omega.
\end{equation}
Note that there exists some $d_0>0$ such that
\begin{equation*}
\int_{\Omega}J(x-y)dy+\frac{\gamma(x)-\beta(x)-\lambda_p(d_I)}{d_I}>0
\end{equation*}
for any $d_I\ge d_0$. Then, for each $d_I\ge d_0$, \eqref{207} implies that
\begin{equation}\label{222}
\psi_{d_I}(x)=\frac{\int_{\Omega}J(x-y)\psi_{d_I}(y)dy}
{\int_{\Omega}J(x-y)dy+\frac{\gamma(x)-\beta(x)-\lambda_p(d_I)}{d_I}}\in C(\bar{\Omega}).
\end{equation}
Choose a sequence $\{d_{I,n}\}_{n=1}^{\infty}$ satisfying $d_{I,n}\to+\infty$ as $n\to+\infty$. Thus, the eigenfunction sequence $\{\psi_{d_{I,n}}\}$ weakly converges to some $\psi(x)$ in $L^2(\Omega)$. Hence, we have
\begin{equation*}
\int_{\Omega}J(x-y)\psi_{d_{I,n}}(y)dy\to
\int_{\Omega}J(x-y)\psi(y)dy~~~\text{as}~n\to+\infty.
\end{equation*}
Following \eqref{222}, there is
\begin{equation*}
\psi_{d_{I,n}}(x)\to\psi(x)~\text{uniformly on}~\bar{\Omega}~\text{as}~n\to+\infty.
\end{equation*}
Since
\begin{equation*}
\int_{\Omega}J(x-y)(\psi_{d_{I,n}}(y)-\psi_{d_{I,n}}(x))dy
=\frac{\gamma(x)-\beta(x)-\lambda_p(d_{I,n})}{d_{I,n}}\psi_{d_{I,n}}(x)~~\text{in}~\Omega,
\end{equation*}
we have
\begin{equation*}
\int_{\Omega}J(x-y)(\psi_{d_{I,n}}(y)-\psi_{d_{I,n}}(x))dy\to0~~\text{uniformly on}~\bar{\Omega}~\text{as}~n\to+\infty.
\end{equation*}
According to Proposition 3.3 in \cite{AMRT2010}, we know $\psi(x)$ is a constant. Integrating both sides of \eqref{207} over $\Omega$, we have
\begin{equation*}
\int_{\Omega}(\beta(x)-\gamma(x))\psi_{d_{I,n}}(x)dx
=-\lambda_p(d_{I,n})\int_{\Omega}\psi_{d_{I,n}}(x)dx.
\end{equation*}
Thus, there is
\begin{equation*}
\lim_{n\to+\infty}\lambda_p(d_{I,n})=\frac{1}{|\Omega|}
\int_{\Omega}(\gamma(x)-\beta(x))dx.
\end{equation*}
Additionally, by the definition of $\lambda_p(d_I)$, (iv) is obvious. Meanwhile, (v) is the direct conclusion of (i)-(iii). The proof is complete.
\end{proof}

Define an operator as follows
\begin{equation}\label{202}
A[u](x):=d_I\int_{\Omega}J(x-y)(u(y)-u(x))dy-\gamma(x)u(x).
\end{equation}
Then, we have the result as below.
\begin{proposition}\label{pro205}
If the operator $A$ is defined by \eqref{202}, then $A$ is a resolvent-positive operator on $X$ and $S(A)<0$.
\end{proposition}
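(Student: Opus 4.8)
The plan is to exploit the fact that $A$, as defined in \eqref{202}, is a \emph{bounded} linear operator on $X=C(\bar\Omega)$: indeed $A=d_I K-\mathcal{N}$, where $K[u](x)=\int_\Omega J(x-y)u(y)\,dy$ is bounded because $J\in C(\bar\Omega)$, and $\mathcal{N}$ is multiplication by the continuous function $m_0(x):=d_I\int_\Omega J(x-y)\,dy+\gamma(x)$. Consequently $A$ generates the uniformly continuous $C_0$-semigroup $T(t)=e^{tA}$, and by Theorem \ref{theorem203} it suffices to verify that $T$ is a positive semigroup in order to conclude that $A$ is resolvent-positive. The negativity of the spectral bound I would then obtain by comparing $T$ against the constant function, using that the cone $X_+$ has nonempty interior.

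To prove positivity I would first shift $A$ by a large constant. Choose $\omega>\max_{\bar\Omega}m_0(x)$, which is finite since $m_0$ is continuous on the compact set $\bar\Omega$ (recall $\int_\Omega J(x-y)\,dy\le\int_{\mathbb{R}^N}J(x-y)\,dy=1$). For $u\in X_+$ one has $(A+\omega I)[u](x)=d_I\int_\Omega J(x-y)u(y)\,dy+(\omega-m_0(x))u(x)\ge0$, so $B:=A+\omega I$ maps $X_+$ into $X_+$. Writing $e^{tB}=\sum_{n\ge0}t^nB^n/n!$ exhibits $e^{tB}$ as a norm-convergent sum of positive operators, hence positive; therefore $T(t)=e^{-\omega t}e^{tB}$ is positive for every $t\ge0$. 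Theorem \ref{theorem203} then yields that $A$ is resolvent-positive.

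For the spectral bound I would test the semigroup on the constant function $\mathbf{1}$. Since $A[\mathbf{1}](x)=-\gamma(x)\le-\gamma_{\min}\mathbf{1}$ with $\gamma_{\min}:=\min_{\bar\Omega}\gamma>0$, and since $T(t)$ is positive, the curve $v(t):=T(t)\mathbf{1}$ satisfies $v'(t)=T(t)A[\mathbf{1}]=-T(t)\gamma\le-\gamma_{\min}T(t)\mathbf{1}=-\gamma_{\min}v(t)$ pointwise on $\bar\Omega$ (using $\gamma-\gamma_{\min}\mathbf{1}\ge0$ and positivity of $T(t)$). Integrating the scalar inequality $\frac{d}{dt}\bigl(e^{\gamma_{\min}t}v(t)\bigr)\le0$ for each fixed $x$ gives $T(t)\mathbf{1}\le e^{-\gamma_{\min}t}\mathbf{1}$. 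Because $-\|f\|_\infty\mathbf{1}\le f\le\|f\|_\infty\mathbf{1}$ for every $f\in X$, positivity of $T(t)$ forces $|T(t)f|\le\|f\|_\infty\,T(t)\mathbf{1}\le e^{-\gamma_{\min}t}\|f\|_\infty\mathbf{1}$, so $\|T(t)\|\le e^{-\gamma_{\min}t}$. Hence the growth bound satisfies $\omega_0(A)\le-\gamma_{\min}<0$, and since $S(A)\le\omega_0(A)$ always holds for a $C_0$-semigroup, I conclude $S(A)<0$.

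The point requiring care is that $S(A)$ is a spectral quantity on $C(\bar\Omega)$, so one cannot simply invoke $L^2$ self-adjointness of $A$; the device that makes everything work is that $\mathbf{1}$ lies in the interior of $X_+$, which is exactly what converts the pointwise supersolution inequality $A[\mathbf{1}]\le-\gamma_{\min}\mathbf{1}$ into a genuine operator-norm decay estimate for $T(t)$ and thus into a bound on $S(A)$. I expect this comparison step — turning the differential inequality for $v(t)=T(t)\mathbf{1}$ into the uniform estimate $\|T(t)\|\le e^{-\gamma_{\min}t}$ — to be the only nonroutine part, the resolvent-positivity being immediate from Theorem \ref{theorem203} once positivity of $T$ is established.
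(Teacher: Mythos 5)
Your proof is correct, and for the spectral bound it takes a genuinely different route from the paper. For resolvent-positivity the paper simply cites the known fact that $A$ generates a positive $C_0$-semigroup and invokes Theorem \ref{theorem203}; your shift $B=A+\omega I$ with $\omega>\max_{\bar\Omega}m_0$ and the exponential series supply the missing verification explicitly, which is a harmless strengthening. The real divergence is in proving $S(A)<0$: the paper introduces the variational quantity $\lambda_p=\sup_\varphi\bigl(-\tfrac{d_I}{2}\iint J(x-y)(\varphi(y)-\varphi(x))^2-\int\gamma\varphi^2\bigr)/\int\varphi^2<0$, approximates the zeroth-order coefficient $h$ by functions $h_n$ for which the perturbed operators $A_n$ admit principal eigenpairs, identifies $\lambda_p^n=S(A_n)$ via Bates--Zhao, and passes to the limit using the continuity of the spectral bound in the coefficient (Shen--Zhang). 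Your argument instead tests the semigroup on the constant function $\mathbf{1}$, which lies in the interior of $X_+$, and converts the supersolution inequality $A[\mathbf{1}]=-\gamma\le-\gamma_{\min}\mathbf{1}$ into the operator bound $\|T(t)\|\le e^{-\gamma_{\min}t}$, whence $S(A)\le\omega_0(A)\le-\gamma_{\min}<0$. Your route is more elementary and self-contained, avoiding the external spectral-approximation lemmas entirely; what the paper's heavier machinery buys is the identification of the spectral bound with the variational quantity ($-\lambda_p(d_I)=S(A+\mathscr{F})$), which the authors explicitly reuse to justify Corollary \ref{remark209}. So your proof fully establishes Proposition \ref{pro205}, but if one adopted it in place of the paper's, Corollary \ref{remark209} would need an independent argument.
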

\begin{proof}
By the definition of the operator $A$, we know $A$ is a bounded linear operator on $X$. It is known that the operator $A$ can generate a positive $C_0$-semigroup, see \cite{KLSH2010}. Then, following from Theorem \ref{theorem203}, we have $A$ is a resolvent-positive operator on $X$.

Let
\begin{equation*}
\lambda_p:=\sup_{\scriptstyle \varphi\in L^2(\Omega) \atop\scriptstyle
\varphi\neq0}\frac{-\frac{d_I}{2}\int_{\Omega}\int_{\Omega}
J(x-y)(\varphi(y)-\varphi(x))^2dydx-\int_{\Omega}\gamma(x)
\varphi^2(x)dx}{\int_{\Omega}\varphi^2(x)dx}.
\end{equation*}
Obviously, $\lambda_p<0$. Let $h(x)=-d_I\int_{\Omega}
J(x-y)dy-\gamma(x)$. We may choose some function sequence $\{h_n(x)\}_{n=1}^{\infty}$ with $\|h_n-h\|_{L^\infty(\Omega)}\to0$ as $n\to+\infty$ such that the eigenvalue problem
\begin{equation*}
A_n[\varphi](x):=d_I\int_{\Omega}
J(x-y)\varphi(y)dy+h_n(x)\varphi(x)=\lambda\varphi(x)~\text{in}~\Omega
\end{equation*}
admits a principal eigenpair denoted by $(\lambda_p^n, \varphi_n(x))$, where $\lambda_p^n\to\lambda_p$ as $n\to+\infty$. Note that $\lambda_p^n=S(A_n)$ for each given $n$ (see Bates and Zhao \cite{BZH2007}). Since $\lambda_p<0$, there exists some $\delta>0$ such that $\lambda_p^n<-\delta$ provided $n\ge n_0$ for some $n_0>0$. Thus, we have $S(A_n)<-\delta$ for $n\ge n_0$. Due to $h_n\to h$ as $n\to+\infty$, we can obtain that $S(A_n)\to S(A)$ as $n\to+\infty$, see Lemma 3.1 in \cite{SZ2012}. This implies that $S(A)<0$. The proof is complete.

\end{proof}

Now, consider the nonlocal dispersal problem
\begin{equation}\label{2011}
\frac{\partial u_I(x,t)}{\partial t}=d_I\int_{\Omega}J(x-y)(u_I(y,t)-u_I(x,t))dy-\gamma(x)u_I(x,t),
\end{equation}
where $x\in\Omega$ and $t>0$. If $u_I(x,t)$ is thought of as a density of the infected individuals at a point $x$ at time $t$, $J(x-y)$ is thought of as the probability distribution of jumping from location $y$ to location $x$, then $\int_{\Omega}J(y-x)u(y,t)dy$ is the rate at which the infected individuals are arriving at position $x$ from all other places, and $-\int_{\Omega}J(y-x)u(x,t)dy$ is the rate at which they are leaving location $x$ to travel to all other sites.
By the theory of semigroups of linear operators, we know that the operator $A$ can generate a uniformly continuous semigroup, denoted by $T(t)$. Suppose that $\phi(x)$ is the distribution of initial infection at location $x$. Then the distribution of those infective members is at time t (as time evolution) is $\left(T(t)\phi\right)(x)$.
Set $\mathscr{F}[\varphi](x):=\beta(x)\varphi(x)$ for $\varphi\in X$. Hence, the distribution of new infection at time $t$ is $\mathscr{F}[T(t)\phi](x)$ and the total new infections are
\begin{equation*}
\int_{0}^{\infty}\mathscr{F}[T(t)\phi](x)dt.
\end{equation*}
Define
\begin{equation*}
L[\phi](x):=\int_{0}^{\infty}\mathscr{F}[T(t)\phi](x)dt=
\beta(x)\int_{0}^{\infty}T(t)\phi dt.
\end{equation*}
Then, inspired by the ideas of next generation operators (see \cite{WZ2012,Diekmann1990,Driessche2002,WZ2011}), we may define the spectral radius of $L$ as the basic reproduction number of system \eqref{101}, that is $R_0=r(L)$.
After then, we have the following result.
\begin{theorem}\label{theorem206}
$R_0-1$ has the same sign as $\lambda_*:=S(A+\mathscr{F})$.
\end{theorem}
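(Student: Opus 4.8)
The plan is to read the statement as a direct application of Theorem~\ref{theorem204} with the splitting $\mathscr{B}=A$ and $\mathscr{C}=\mathscr{F}$, so that $\mathscr{A}=A+\mathscr{F}$ and $S(\mathscr{A})=\lambda_*$. First I would check the three hypotheses of that theorem. Proposition~\ref{pro205} already gives that $A$ is resolvent-positive with $S(A)<0$. Since $\mathscr{F}$ is multiplication by the nonnegative continuous function $\beta(x)$, it maps $X_+$ into $X_+$, so $\mathscr{A}=A+\mathscr{F}$ is a positive perturbation of $A$. The remaining hypothesis is that $A+\mathscr{F}$ itself be resolvent-positive, which by Theorem~\ref{theorem203} is equivalent to the semigroup it generates being positive.

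To establish the resolvent-positivity of $A+\mathscr{F}$, I would observe that $A+\mathscr{F}$ is a bounded linear operator on $X$ and hence generates the uniformly continuous semigroup $e^{t(A+\mathscr{F})}$. Writing $A+\mathscr{F}=d_I\mathcal{J}+m(x)\cdot$, where $\mathcal{J}[u](x)=\int_{\Omega}J(x-y)u(y)dy$ is positive and $m(x)=-d_I\int_{\Omega}J(x-y)dy+\beta(x)-\gamma(x)$, I would pick a constant $c>0$ with $m(x)+c\ge0$ on $\bar{\Omega}$. Then $A+\mathscr{F}+cI=d_I\mathcal{J}+(m+c)\cdot$ is a sum of positive operators and is therefore positive, so $e^{t(A+\mathscr{F}+cI)}$ is positive for every $t\ge0$, and consequently $e^{t(A+\mathscr{F})}=e^{-ct}e^{t(A+\mathscr{F}+cI)}$ is positive as well. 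This is the same device used for $A$ in \cite{KLSH2010}, so it can be kept brief; it shows $A+\mathscr{F}$ generates a positive $C_0$-semigroup and hence is resolvent-positive.

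The second half of the argument identifies $r(-\mathscr{F}A^{-1})$ with $R_0$. Because $A$ is resolvent-positive with $S(A)<0$, we have $0\in\rho(A)$ and, applying Theorem~\ref{theorem203} with $\lambda=0>S(A)$, $(-A)^{-1}x=\int_{0}^{\infty}T(t)x\,dt$ for all $x\in X$, where $T(t)=e^{tA}$. Composing with $\mathscr{F}$ then yields
$$-\mathscr{F}A^{-1}\phi=\mathscr{F}(-A)^{-1}\phi=\beta(x)\int_{0}^{\infty}T(t)\phi\,dt=L[\phi],$$
so that $L=-\mathscr{F}A^{-1}$ and therefore $R_0=r(L)=r(-\mathscr{F}A^{-1})$. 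Theorem~\ref{theorem204} now gives that $\lambda_*=S(A+\mathscr{F})$ has the same sign as $r(-\mathscr{F}A^{-1})-1=R_0-1$, which is exactly the assertion.

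I expect the only genuine obstacle to be the verification that $A+\mathscr{F}$ generates a positive semigroup, i.e. that it is resolvent-positive; the identification $L=-\mathscr{F}A^{-1}$ and the final invocation of Theorem~\ref{theorem204} are then bookkeeping resting on the cited results of Thieme and on Proposition~\ref{pro205}. Since $A+\mathscr{F}$ is bounded and of the nonlocal-plus-multiplication form, that obstacle is handled by the constant-shift argument above, so the proof should be short.
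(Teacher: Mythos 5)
Your proposal follows the paper's proof essentially verbatim: both identify $L=-\mathscr{F}A^{-1}$ via Theorem~\ref{theorem203} at $\lambda=0$ and then invoke Theorem~\ref{theorem204} with $\mathscr{B}=A$, $\mathscr{C}=\mathscr{F}$, using Proposition~\ref{pro205} for $S(A)<0$. The only difference is that you spell out the constant-shift argument for why $A+\mathscr{F}$ generates a positive semigroup, a step the paper simply asserts; this is a welcome elaboration, not a different route.
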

\begin{proof}
Since $A$ is the generator of the semigroup $T(t)$ on $X$ and $A$ is resolvent-positive, it then follows from Theorem \ref{theorem203} that
\begin{equation}\label{203}
(\lambda I-A)^{-1}\phi=\int_{0}^{\infty}e^{-\lambda t}T(t)\phi dt~~\text{for any}~\lambda>S(A),~\phi\in X.
\end{equation}
Choosing $\lambda=0$ in \eqref{203}, we obtain
\begin{equation}\label{2008}
-A^{-1}\phi=\int_{0}^{\infty}T(t)\phi dt~\text{for all}~\phi\in X.
\end{equation}
Then, the definition of the operator $L$ implies that $L=-\mathscr{F}A^{-1}$. Let $\mathcal{M}:=A+\mathscr{F}$.
We know that $\mathcal{M}$ can generate a uniformly continuous positive semigroup, then $\mathcal{M}$ is resolvent-positive. Meanwhile, $S(A)<0$. Thus, following from Theorem \ref{theorem204}, we have $S(\mathcal{M})$ has the same sign as $r(-\mathscr{F}A^{-1})-1=R_0-1$. The proof is complete.
\end{proof}

Note that if $\lambda_p(d_I)$ is the principal eigenvalue of \eqref{201}, then $-\lambda_p(d_I)=S(A+\mathscr{F})$. In this case, $-\lambda_p(d_I)$ has the same sign as $R_0-1$ according to Theorem \ref{theorem206}. However, we still have the following result whether $\lambda_p(d_I)$ is the principal eigenvalue of \eqref{201} or not.
\begin{corollary}\label{remark209}
$\lambda_p(d_I)$ has the same sign as $1-R_0$.
\end{corollary}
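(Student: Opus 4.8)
The plan is to reduce everything to the single identity $S(A+\mathscr{F}) = -\lambda_p(d_I)$, valid whether or not $\lambda_p(d_I)$ is a genuine eigenvalue, and then to invoke Theorem \ref{theorem206}. First I would observe that $A+\mathscr{F}$ is exactly the operator $\mathcal{M}$ of the eigenvalue problem \eqref{201}, since $(A+\mathscr{F})[u](x) = d_I\int_\Omega J(x-y)(u(y)-u(x))dy + (\beta(x)-\gamma(x))u(x)$. Thus $\lambda_* = S(\mathcal{M})$, and by Theorem \ref{theorem206} it suffices to prove $S(\mathcal{M}) = -\lambda_p(d_I)$; the claim that $\lambda_p(d_I)$ has the same sign as $1-R_0$ then follows at once, because $-\lambda_p(d_I)$ has the same sign as $R_0-1$.

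I would split into the two cases governed by Lemma \ref{newlem}. If $\lambda_p(d_I)$ is the principal eigenvalue of \eqref{201}, the observation preceding the corollary already records $S(\mathcal{M}) = -\lambda_p(d_I)$, so nothing more is needed. The substance is the complementary case, where $\lambda_p(d_I)$ fails to be principal. By Lemma \ref{newlem} this forces $\lambda_p(d_I) \ge \min_{\bar\Omega}\{d_I\int_\Omega J(x-y)dy + \gamma(x)-\beta(x)\}$; combined with the universal bound $\lambda_p(d_I) \le \min_{\bar\Omega}\{d_I\int_\Omega J(x-y)dy + \gamma(x)-\beta(x)\}$ — which I would verify by feeding into the Rayleigh quotient a sequence of normalized bumps concentrating at a minimizer of $d_I\int_\Omega J(x-y)dy + \gamma(x)-\beta(x)$, for which the nonlocal Dirichlet form contributes $d_I\int_\Omega J(x-y)dy$ in the limit — one obtains the equality $\lambda_p(d_I) = \min_{\bar\Omega}\{d_I\int_\Omega J(x-y)dy + \gamma(x)-\beta(x)\}$.

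With this equality in hand I would identify $S(\mathcal{M})$ with the top of its essential spectrum. Writing $\mathcal{M} = \mathcal{K} + a$, where $\mathcal{K}[u](x) = d_I\int_\Omega J(x-y)u(y)dy$ is compact (its kernel is continuous on $\bar\Omega\times\bar\Omega$) and $a(x) = -d_I\int_\Omega J(x-y)dy + \beta(x)-\gamma(x)$ acts by multiplication, invariance of the essential spectrum under compact perturbation gives $\sigma_{\mathrm{ess}}(\mathcal{M}) = a(\bar\Omega)$, whose maximum is $\max_{\bar\Omega} a = -\min_{\bar\Omega}\{d_I\int_\Omega J(x-y)dy + \gamma(x)-\beta(x)\} = -\lambda_p(d_I)$. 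Since $\mathcal{M}$ is resolvent-positive (it generates a positive semigroup), its spectral bound $S(\mathcal{M})$ lies in $\sigma(\mathcal{M})$; were $S(\mathcal{M}) > \max_{\bar\Omega} a$, it would be an isolated eigenvalue above the essential spectrum carrying a positive eigenfunction, i.e. $\lambda_p(d_I)$ would be principal, contrary to the present case. Hence $S(\mathcal{M}) = \max_{\bar\Omega} a = -\lambda_p(d_I)$, matching the first case.

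I expect the crux to be this second case: showing $S(\mathcal{M})$ sits exactly at the top of the essential spectrum once a principal eigenvalue is absent. The two ingredients I would lean on are the compact-plus-multiplication splitting, which locates $\sigma_{\mathrm{ess}}(\mathcal{M})$ as $a(\bar\Omega)$, and the resolvent-positivity of $\mathcal{M}$, which drives its spectral bound into the spectrum and, if that bound exceeded $\max_{\bar\Omega} a$, would furnish a positive eigenfunction and hence a principal eigenvalue. Once $S(\mathcal{M}) = -\lambda_p(d_I)$ is secured in both cases, the corollary is immediate from Theorem \ref{theorem206}.
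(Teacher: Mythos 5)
Your argument is correct in outline but follows a genuinely different route from the paper's. The paper proves the corollary by pointing back to the proof of Proposition \ref{pro205}: one approximates the multiplication part of $\mathcal{M}=A+\mathscr{F}$ in $L^\infty$ by potentials for which a principal eigenpair exists, uses $\lambda_p^n=S(\mathcal{M}_n)$ for each approximant, and passes to the limit using the continuity of $\lambda_p$ (Remark \ref{re208}) and of the spectral bound under $L^\infty$ perturbations (Lemma 3.1 of [SZ2012]); this yields $-\lambda_p(d_I)=S(A+\mathscr{F})$ in one stroke, with no case distinction. You instead split according to Lemma \ref{newlem} and, in the non-principal case, locate $S(\mathcal{M})$ at the top of the essential spectrum via the compact-plus-multiplication decomposition and Weyl invariance, then rule out an isolated eigenvalue above it by positivity. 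Both routes work; the paper's buys uniformity (no dichotomy) at the price of citing an external continuity result, while yours is more self-contained spectrally and makes visible \emph{why} the identity survives the loss of a principal eigenvalue, namely that $\lambda_p(d_I)$ then pins itself to $\min_{\bar{\Omega}}\{d_I\int_{\Omega}J(x-y)dy+\gamma(x)-\beta(x)\}$, which is exactly $-\max_{\bar{\Omega}}a$.

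Two steps in your second case are asserted rather than proved and deserve attention. First, the claim that if $S(\mathcal{M})>\max_{\bar{\Omega}}a$ then $S(\mathcal{M})$ is an isolated eigenvalue carrying a \emph{positive} eigenfunction is a nontrivial fact from positive-semigroup theory (the spectral bound, when a pole of the resolvent, has a positive eigenvector); you should cite or prove it, and then still argue that such an eigenvalue must coincide with the variational quantity $\lambda_p(d_I)$ (uniqueness of the eigenvalue admitting a positive eigenfunction, as in Lemma \ref{lemma211}) in order to reach the contradiction with Lemma \ref{newlem}. Second, the operators $A$, $\mathscr{F}$, $T(t)$ are defined on $X=C(\bar{\Omega})$ while the variational formula lives on $L^2(\Omega)$, so you need the spectral bounds on the two spaces to agree; for $\mathcal{M}=\mathcal{K}+a$ with $\mathcal{K}$ compact this is true (isolated eigenvalues have automatically continuous eigenfunctions via $u=(\lambda-a)^{-1}\mathcal{K}u$), but it should be said. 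Note also that on $L^2(\Omega)$ the operator $\mathcal{M}$ is bounded and self-adjoint because $J$ is symmetric, so $S_{L^2}(\mathcal{M})=\sup_{\|\varphi\|_{L^2}=1}\langle\mathcal{M}\varphi,\varphi\rangle=-\lambda_p(d_I)$ is immediate from the definition; this collapses your entire case analysis and is the shortest correct path once the $C(\bar{\Omega})$ versus $L^2$ identification is in hand.
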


In fact, this is easily seen from the proof in Proposition \ref{pro205} that $-\lambda_p(d_I)=S(A+\mathscr{F})$. Thus, Corollary \ref{remark209} is obvious.

\begin{corollary}\label{newcor1}
If $\beta(x_0)>\gamma(x_0)$ for some $x_0\in\Omega$ and $\int_{\Omega}\beta(x)dx<\int_{\Omega}\gamma(x)dx$. Then there exists some $d_*>0$ such that $R_0>1$ for all $0<d_I<d_*$ and $R_0<1$ for $d_I>d_*$.
\end{corollary}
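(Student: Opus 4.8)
The plan is to transport the sign analysis of the map $d_I\mapsto\lambda_p(d_I)$ onto $R_0$ through Corollary \ref{remark209}, which asserts that $\lambda_p(d_I)$ and $1-R_0$ carry the same sign for every $d_I>0$ and, crucially, does so without requiring $\lambda_p(d_I)$ to be attained as a principal eigenvalue. Under this dictionary the statement reduces to locating the unique sign change of $\lambda_p$: once I know $\lambda_p(d_I)<0$ on $(0,d_*)$ and $\lambda_p(d_I)>0$ on $(d_*,\infty)$ for a single threshold $d_*$, the translations $\lambda_p(d_I)<0\Leftrightarrow R_0>1$ and $\lambda_p(d_I)>0\Leftrightarrow R_0<1$ deliver the corollary verbatim.

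First I would read the endpoints off the hypotheses. The assumption $\beta(x_0)>\gamma(x_0)$ forces $\min_{\bar\Omega}(\gamma-\beta)<0$, and $\int_\Omega\beta\,dx<\int_\Omega\gamma\,dx$ forces $\frac{1}{|\Omega|}\int_\Omega(\gamma-\beta)\,dx>0$; by Theorem \ref{theorem2203} (ii) and (iii) these two quantities are exactly $\lim_{d_I\to0}\lambda_p(d_I)$ and $\lim_{d_I\to+\infty}\lambda_p(d_I)$. Hence $\lambda_p$ is negative for small $d_I$ and positive for large $d_I$. Combining this with the strict monotonicity in Theorem \ref{theorem2203}(i) and the continuity recorded in Remark \ref{re208}, the intermediate value theorem produces a unique root $d_*>0$ of $\lambda_p(d_I)=0$, negative below and positive above it; this is precisely Theorem \ref{theorem2203}(v) with $d_*=d_I^*$. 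Pushing these signs through Corollary \ref{remark209} then gives $R_0>1$ on $(0,d_*)$ and $R_0<1$ on $(d_*,\infty)$.

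The delicate point, and the one I expect to be the main obstacle, is that Theorem \ref{theorem2203} carries the standing hypothesis that $\lambda_p(d_I)$ is the principal eigenvalue of \eqref{201} — a property that may genuinely fail for the nonlocal operator, as the paper emphasizes. To keep the argument airtight I would avoid leaning on that hypothesis and instead establish directly, from the variational definition of $\lambda_p(d_I)$ (which is meaningful for every $d_I$ regardless of attainment), only the three ingredients actually used: monotonicity and the two limits. Monotonicity is immediate, since for each fixed nonconstant test function the Rayleigh quotient is strictly increasing in $d_I$; the limits follow by using constant test functions for the upper bounds and test functions concentrating near a minimizer of $\gamma-\beta$ for the lower bound as $d_I\to0$, exactly as in the proof of Theorem \ref{theorem2203}. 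This route invokes only Corollary \ref{remark209}, itself proved without the eigenvalue assumption, and so closes the logical gap cleanly.
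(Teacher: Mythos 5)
Your overall strategy --- reading the sign of $R_0-1$ off the sign of $-\lambda_p(d_I)$ via Corollary \ref{remark209}, pinning down the endpoint behaviour of $\lambda_p$, and using monotonicity to produce the threshold $d_*$ --- is exactly the paper's. You also correctly flag the delicate point: Theorem \ref{theorem2203} carries the standing hypothesis that $\lambda_p(d_I)$ is attained as a principal eigenvalue of \eqref{201}, which can fail for the nonlocal operator. The problem is that your proposed repair does not close the gap on the side where it matters. For small $d_I$ everything is fine: a test function supported where $\beta>\gamma$ gives an upper bound forcing $\lambda_p(d_I)<0$, and an upper bound is all that is needed there (this is precisely what the paper does with its $\tilde\varphi$). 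But for $d_I>d_*$ the corollary asserts $R_0<1$, i.e.\ $\lambda_p(d_I)>0$, and $\lambda_p(d_I)$ is defined as an \emph{infimum}. No choice of test function --- constant or otherwise --- can bound an infimum from below; plugging in $\varphi\equiv\mathrm{const}$ only yields $\lambda_p(d_I)\le\frac{1}{|\Omega|}\int_\Omega(\gamma(x)-\beta(x))\,dx$, which is the wrong inequality. So the step ``the limits follow by using constant test functions for the upper bounds'' leaves the claim that $\lambda_p(d_I)$ is eventually positive (let alone that it converges to $\frac{1}{|\Omega|}\int_\Omega(\gamma-\beta)\,dx$) unproved once you have discarded the eigenfunction.

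The paper closes this precisely by \emph{not} discarding the principal-eigenvalue property at infinity: since $\lambda_p(d_I)\le\max_{\bar\Omega}(\gamma-\beta)$ stays bounded while $\min_{\bar\Omega}\{d_I\int_\Omega J(x-y)dy+\gamma(x)-\beta(x)\}\to+\infty$, Lemma \ref{newlem} guarantees that $\lambda_p(d_I)$ \emph{is} the principal eigenvalue for all large $d_I$, and only then is Theorem \ref{theorem2203}(iii) invoked (its proof uses the eigenfunction $\psi_{d_I}$ in an essential way). If you insist on a purely variational argument, the fix is not test functions but a Poincar\'e-type estimate: decompose $\varphi=\bar\varphi+\psi$ with $\int_\Omega\psi\,dx=0$ and use the spectral gap $\alpha(J,\Omega)>0$ of \eqref{3003} to absorb the mean-zero part, which shows the Rayleigh quotient is bounded below by a positive constant uniformly in $\varphi$ once $d_I$ is large and $\int_\Omega(\gamma-\beta)\,dx>0$. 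Either route works; as written, yours has a hole exactly at the $R_0<1$ half of the conclusion. (A smaller point: without attainment the monotonicity of the infimum is only non-strict, so one should also rule out $\lambda_p$ vanishing on a whole interval; the paper is terse here too, but with the principal eigenvalue available for large $d_I$ strict monotonicity holds where it is needed.)
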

\begin{proof}
\,Since $\beta(x_0)>\gamma(x_0)$ for some $x_0\in\Omega$, the continuity of $\beta(x)$ and $\gamma(x)$ gives that $\beta(x)>\gamma(x)$ for any $x\in B_r(x_0)$, which $B_r(x_0)$ is a ball with the center $x_0$ and the radius $r>0$. Let $\Omega_*=B_r(x_0)\cap\Omega$ and denote
\begin{equation*}
\tilde{\varphi}:=
\begin{cases}
C,&~~x\in \Omega_*,\\
0,&~~x\in \Omega\backslash \Omega_*,
\end{cases}
\end{equation*}
for some nonzero constant $C$. Then, by the definition of $\lambda_p(d_I)$ and the continuity of $\lambda_p(d_I)$ on $d_I$, we have
$$\lambda_p(0)<\int_{\Omega_*}(\gamma(x)-\beta(x))dx<0.$$

Moreover, it follows from the definition of $\lambda_p(d_I)$ that
\begin{equation*}
\lambda_p(d_I)\le\max_{\bar{\Omega}}\{\gamma(x)-\beta(x)\}.
\end{equation*}
Then, there exists some $\hat{d}>0$ such that
\begin{equation*}
\lambda_p(d_I)<\max_{\bar{\Omega}}\left\{
d_I\int_{\Omega}J(x-y)dy+\gamma(x)-\beta(x)\right\}
\end{equation*}
for any $d_I>\hat{d}$. According to Lemma \ref{newlem}, $\lambda_p(d_I)$ is the principal eigenvalue of \eqref{201} for $d_I>\hat{d}$. Thus, using Theorem \ref{theorem2203}, we have
\begin{equation*}
\lim_{d_I\to+\infty}\lambda_p(d_I)=\frac{1}{|\Omega|}
\int_{\Omega}(\gamma(x)-\beta(x))dx.
\end{equation*}
Since $\lambda_p(d_I)$ is nondecreasing on $d_I$, there is some $d_*>0$ such that
\begin{equation*}
\lambda_p(d_I)
\begin{cases}
<0~~\text{if}~~0<d_I<d_*,\\
>0~~\text{if}~~d_I>d_*.
\end{cases}
\end{equation*}
Thus, using Corollary \ref{remark209}, we can finish our proof.
\end{proof}

\begin{corollary}\label{newcor}
If $\int_{\Omega}\beta(x)dx>\int_{\Omega}\gamma(x)dx$, then $R_0>1$ for any $d_I>0$. Further, if $\beta(x)<\gamma(x)$ for $x\in\Omega$, then $R_0<1$ for all $d_I>0$.
\end{corollary}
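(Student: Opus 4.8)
The plan is to reduce both claims to statements about the sign of $\lambda_p(d_I)$ via Corollary \ref{remark209}, which tells us that $\lambda_p(d_I)$ and $1-R_0$ share the same sign for every $d_I>0$, irrespective of whether $\lambda_p(d_I)$ is actually attained as the principal eigenvalue of \eqref{201}. Thus it suffices to show $\lambda_p(d_I)<0$ in the first case and $\lambda_p(d_I)>0$ in the second. For this I would work directly from the variational (Rayleigh-quotient) definition of $\lambda_p(d_I)$ rather than from Theorem \ref{theorem2203}, precisely because the latter presupposes the principal eigenvalue property, whereas the corollary must hold for all $d_I>0$.

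For the first assertion, assuming $\int_{\Omega}\beta(x)\,dx>\int_{\Omega}\gamma(x)\,dx$, I would insert the constant test function $\varphi\equiv1$ into the infimum defining $\lambda_p(d_I)$. The double-integral term vanishes because $\varphi(y)-\varphi(x)\equiv0$, so the quotient collapses to $\frac{1}{|\Omega|}\int_{\Omega}(\gamma(x)-\beta(x))\,dx$, which is strictly negative under the hypothesis. Since $\lambda_p(d_I)$ is an infimum over all admissible $\varphi$, this yields $\lambda_p(d_I)<0$ for every $d_I>0$, and hence $R_0>1$ by Corollary \ref{remark209}.

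For the second assertion, assuming $\beta(x)<\gamma(x)$ throughout $\Omega$, I would exploit compactness of $\bar{\Omega}$ together with continuity of $\gamma-\beta$ to obtain $c:=\min_{\bar{\Omega}}(\gamma(x)-\beta(x))>0$. For an arbitrary nonzero $\varphi\in L^2(\Omega)$ the kernel term $\frac{d_I}{2}\int_{\Omega}\int_{\Omega}J(x-y)(\varphi(y)-\varphi(x))^2\,dy\,dx$ is nonnegative, while the potential term is bounded below by $c\int_{\Omega}\varphi^2\,dx$; dividing by $\int_{\Omega}\varphi^2\,dx$ shows that every Rayleigh quotient is at least $c$. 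Passing to the infimum gives $\lambda_p(d_I)\ge c>0$ for all $d_I>0$, whence $R_0<1$ again by Corollary \ref{remark209}.

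The argument is essentially immediate once the right tool is identified; the only genuine subtlety, and the one I would flag, is the decision to bypass Theorem \ref{theorem2203}(iv)—whose conclusion overlaps with the first claim—because that theorem is conditioned on $\lambda_p(d_I)$ being the principal eigenvalue, a hypothesis that need not hold for all $d_I>0$ in the nonlocal setting. Routing through the variational characterization and Corollary \ref{remark209} sidesteps this regularity gap entirely, which is exactly the kind of difficulty the introduction warns about.
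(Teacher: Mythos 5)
Your proof is correct and is exactly the argument the paper intends: the paper's entire proof is the remark that the claim ``is easy seen from the definition of $\lambda_p(d_I)$ and Corollary \ref{remark209}'', and your two test-function computations (the constant function $\varphi\equiv1$ for the first claim, the uniform lower bound $c=\min_{\bar{\Omega}}(\gamma-\beta)>0$ together with nonnegativity of the kernel term for the second) are precisely how that remark is meant to be unpacked, with Corollary \ref{remark209} transferring the sign of $\lambda_p(d_I)$ to $1-R_0$. The only hairline point --- the hypothesis $\beta<\gamma$ is stated on $\Omega$ while your minimum is taken over $\bar{\Omega}$ --- is shared with the paper and harmless under its standing assumption that $\beta,\gamma$ are continuous on $\bar{\Omega}$.
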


This is easy seen from the definition of $\lambda_p(d_I)$ and Corollary \ref{remark209}.

\begin{lemma}\label{lemma211}
Assume $(\mu_p, \phi(x))$ with $\phi(x)>0$ is a principal eigenpair of the weighted eigenvalue problem
\begin{equation}\label{2009}
-d_I\int_{\Omega}J(x-y)(\phi(y)-\phi(x))dy+\gamma(x)\phi(x)=
\mu\beta(x)\phi(x),~~x\in\Omega.
\end{equation}
Then, $\mu_p$ is a unique positive principal eigenvalue and can be characterized by
\begin{equation*}
\mu_p=\inf_{\scriptstyle \varphi\in L^2(\Omega) \atop\scriptstyle
\varphi\neq0}\frac{\frac{d_I}{2}\int_{\Omega}\int_{\Omega}
J(x-y)(\varphi(y)-\varphi(x))^2dydx+\int_{\Omega}\gamma(x)
\varphi^2(x)dx}{\int_{\Omega}\beta(x)\varphi^2(x)dx}.
\end{equation*}
\end{lemma}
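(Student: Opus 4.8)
The plan is to show that the infimum of the weighted Rayleigh quotient on the right-hand side, which I will call $\mu_*$, coincides with any principal eigenvalue $\mu_p$ that possesses a positive eigenfunction; positivity and uniqueness then come almost for free. First I would record the symmetrization identity that converts the quadratic form into a bilinear pairing against the operator. Writing $\mathcal{L}[\varphi](x):=-d_I\int_{\Omega}J(x-y)(\varphi(y)-\varphi(x))dy+\gamma(x)\varphi(x)$, the symmetry $J(x)=J(-x)$ from hypothesis (J) yields, after swapping $x\leftrightarrow y$ in the expansion of the square, that for every $\varphi\in L^2(\Omega)$
$$\frac{d_I}{2}\int_{\Omega}\int_{\Omega}J(x-y)(\varphi(y)-\varphi(x))^2\,dy\,dx+\int_{\Omega}\gamma(x)\varphi^2\,dx=\int_{\Omega}\varphi(x)\,\mathcal{L}[\varphi](x)\,dx.$$
Thus the numerator of the quotient is exactly $\int_{\Omega}\varphi\,\mathcal{L}[\varphi]$, the operator $\mathcal{L}$ is formally self-adjoint, and $\mu_*$ is the infimum of $\int_\Omega\varphi\,\mathcal{L}[\varphi]\big/\int_\Omega\beta\varphi^2$ over $\varphi\neq0$.

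Two of the three assertions are then immediate. For positivity, the nonlocal quadratic term is nonnegative and $\gamma(x)\ge\min_{\bar{\Omega}}\gamma>0$, so the numerator is at least $(\min_{\bar{\Omega}}\gamma)\|\varphi\|_{L^2(\Omega)}^2>0$, while the denominator is positive since $\beta>0$; hence $\mu_*\ge\min_{\bar{\Omega}}\gamma\big/\max_{\bar{\Omega}}\beta>0$. For the bound $\mu_*\le\mu_p$, I would simply insert $\varphi=\phi$ into the quotient: by the symmetrization identity together with $\mathcal{L}[\phi]=\mu_p\beta\phi$, the numerator equals $\int_{\Omega}\phi\,\mathcal{L}[\phi]=\mu_p\int_{\Omega}\beta\phi^2$, so the quotient attains the value $\mu_p$.

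The substance of the argument is the reverse inequality $\mu_*\ge\mu_p$, i.e. $\int_{\Omega}\varphi\,\mathcal{L}[\varphi]\ge\mu_p\int_{\Omega}\beta\varphi^2$ for all $\varphi$. Here I would exploit the positivity of $\phi$ through a nonlocal ground-state substitution (a discrete Picone identity). Since $\phi$ is positive and continuous on the compact set $\bar{\Omega}$, we have $\inf_{\Omega}\phi>0$, so any $\varphi\in L^2(\Omega)$ can be written $\varphi=\phi\psi$ with $\psi=\varphi/\phi\in L^2(\Omega)$. Using the eigenvalue equation to substitute $\gamma(x)-\mu_p\beta(x)=\frac{d_I}{\phi(x)}\int_{\Omega}J(x-y)(\phi(y)-\phi(x))dy$ into the difference of the numerator and $\mu_p\int_\Omega\beta\varphi^2$, then expanding $(\varphi(y)-\varphi(x))^2$ in terms of $\phi$ and $\psi$ and symmetrizing in $x\leftrightarrow y$ (again using the symmetry of $J$ and of the kernel $J(x-y)\phi(x)\phi(y)$), I expect the pure $\phi^2\psi^2$ contributions to cancel and to arrive at the clean identity
$$\int_{\Omega}\varphi\,\mathcal{L}[\varphi]-\mu_p\int_{\Omega}\beta\varphi^2=\frac{d_I}{2}\int_{\Omega}\int_{\Omega}J(x-y)\phi(x)\phi(y)\bigl(\psi(y)-\psi(x)\bigr)^2\,dy\,dx\ge0.$$
This bookkeeping is the main obstacle: the cancellations must be carried out carefully, and the substitution $\varphi=\phi\psi$ must be justified, which is precisely where $\inf_{\Omega}\phi>0$ enters. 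Granting it, $\mu_*\ge\mu_p$, and combined with the previous paragraph this gives $\mu_p=\mu_*$.

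Finally, uniqueness of the positive principal eigenvalue drops out of the characterization: the reasoning above applies verbatim to \emph{any} eigenpair $(\mu,\phi)$ with $\phi>0$ and forces $\mu=\mu_*$, a quantity depending only on $d_I$, $J$, $\beta$, $\gamma$ and not on the chosen eigenpair. Hence two principal eigenvalues admitting positive eigenfunctions must coincide, which simultaneously establishes uniqueness, the variational formula $\mu_p=\mu_*$, and $\mu_p>0$.
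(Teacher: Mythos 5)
Your argument is correct, but it takes a genuinely different route from the paper at the one step that carries all the weight. Both proofs get $\mu_*\le\mu_p$ by inserting $\varphi=\phi$ into the quotient, and positivity is immediate either way. For the reverse inequality the paper argues by contradiction: it introduces the auxiliary function $\sigma(\mu)=\sup_{\varphi\neq0}\bigl(\mu\int_\Omega\beta\varphi^2-\mathbb{H}(\varphi)\bigr)/\int_\Omega\varphi^2$ with $\mathbb{H}$ the numerator of the quotient, uses that $\sigma$ is nondecreasing, asserts $\sigma(\mu_p)=0$, and derives $0<\sigma(\mu_*)\le\sigma(\mu_p)=0$ for an intermediate value $\mu_*$. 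You instead prove the inequality $\int_\Omega\varphi\,\mathcal{L}[\varphi]\ge\mu_p\int_\Omega\beta\varphi^2$ directly for every $\varphi$ via the ground-state substitution $\varphi=\phi\psi$ and the nonlocal Picone identity; the algebra checks out, since
\begin{equation*}
\bigl(\phi(y)\psi(y)-\phi(x)\psi(x)\bigr)^2-\phi(x)\phi(y)\bigl(\psi(y)-\psi(x)\bigr)^2=\bigl(\phi(y)-\phi(x)\bigr)\bigl(\phi(y)\psi(y)^2-\phi(x)\psi(x)^2\bigr),
\end{equation*}
and symmetrizing in $x\leftrightarrow y$ turns this into exactly minus the term contributed by substituting $\gamma-\mu_p\beta$ from the eigenvalue equation. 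Your version is arguably more self-contained: the paper's assertion $\sigma(\mu_p)=0$ only yields $\sigma(\mu_p)\ge0$ for free from the eigenpair, and the missing half $\sigma(\mu_p)\le0$ is precisely the inequality your Picone identity establishes. For uniqueness the paper pairs two positive eigenpairs against each other to obtain $(\mu_1-\mu_2)\int_\Omega\beta\phi_1\phi_2\,dx=0$, whereas you deduce it from the variational formula; both are valid. The one point to tighten is the claim $\psi=\varphi/\phi\in L^2(\Omega)$: rather than assuming continuity of $\phi$ outright, note that the eigenvalue equation gives $\phi(x)=d_I\int_\Omega J(x-y)\phi(y)dy\big/\bigl(d_I\int_\Omega J(x-y)dy+\gamma(x)-\mu_p\beta(x)\bigr)$, whose numerator is a positive continuous function of $x$ and whose denominator is bounded above, so $\inf_{\bar{\Omega}}\phi>0$ and the substitution is legitimate.
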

\begin{proof}
Let $(\mu_i, \phi_i(x))$ $(i=1,2)$ with $\phi_i(x)>0$ satisfying
\begin{equation*}
-d_I\int_{\Omega}J(x-y)(\phi_i(y)-\phi_i(x))dy+\gamma(x)\phi_i(x)=
\mu_i\beta(x)\phi_i(x).
\end{equation*}
Following these equations, it is easy to obtain that
\begin{equation*}
(\mu_1-\mu_2)\int_{\Omega}\beta(x)\phi_1(x)\phi_2(x)dx=0.
\end{equation*}
The positivity of $\phi_1(x)$ and $\phi_2(x)$ gives that $\mu_1=\mu_2$. Further, according to \eqref{2009}, there is
\begin{equation}\label{2010}
\mu_p=\frac{\frac{d_I}{2}\int_{\Omega}\int_{\Omega}
J(x-y)(\phi(y)-\phi(x))^2dydx+\int_{\Omega}\gamma(x)
\phi^2(x)dx}{\int_{\Omega}\beta(x)\phi^2(x)dx}.
\end{equation}
Obviously, $\mu_p>0$.

Below, we prove that
\begin{equation*}
\mu_p=\mu_p^\prime:=\inf_{\scriptstyle \varphi\in L^2(\Omega) \atop\scriptstyle
\varphi\neq0}\frac{\frac{d_I}{2}\int_{\Omega}\int_{\Omega}
J(x-y)(\varphi(y)-\varphi(x))^2dydx+\int_{\Omega}\gamma(x)
\varphi^2(x)dx}{\int_{\Omega}\beta(x)\varphi^2(x)dx}.
\end{equation*}
In view of \eqref{2010}, we have $\mu_p\ge \mu_p^\prime$. Assume that $\mu_p>\mu_p^\prime$. Then, there exists some $\mu_*$ such that $\mu_p^\prime<\mu_*<\mu_p$. Set
$$\mathbb{H}(\varphi)=\frac{d_I}{2}\int_{\Omega}\int_{\Omega}
J(x-y)(\varphi(y)-\varphi(x))^2dydx+\int_{\Omega}\gamma(x)
\varphi^2(x)dx$$
and define
\begin{equation*}
\sigma(\mu)=\sup_{\scriptstyle \varphi\in L^2(\Omega) \atop\scriptstyle
\varphi\neq0}\frac{\mu\int_{\Omega}\beta(x)\varphi^2(x)dx
-\mathbb{H}(\varphi)}{\int_{\Omega}\varphi^2(x)dx}.
\end{equation*}
Then, it follows from \eqref{2010} that $\sigma(\mu_p)=0$. Since $\mu_*>\mu_p^\prime$, there is some $v\in L^2(\Omega)$ and $v\neq0$ satisfying
\begin{equation*}
\mu_*>\frac{\frac{d_I}{2}\int_{\Omega}\int_{\Omega}
J(x-y)(v(y)-v(x))^2dydx+\int_{\Omega}\gamma(x)
v^2(x)dx}{\int_{\Omega}\beta(x)v^2(x)dx}>0.
\end{equation*}
This implies that $\sigma(\mu_*)>0$. On the other hand, by the definition of $\sigma(\mu)$, it is easy to see that $\sigma(\mu)$ is nondecreasing on $\mu$. Due to $\mu_*<\mu_p$, we have $\sigma(\mu_*)\le\sigma(\mu_p)$. That is $\sigma(\mu_*)\le0$, which is a contradiction. We end the proof.
\end{proof}

\begin{corollary}\label{cor2112}
If $(\mu^*, \phi^*(x))$ with $\phi^*(x)>0$ satisfies the following linear problem
\begin{equation*}
\begin{cases}
\int_{\mathbb{R}^N}J(x-y)(\phi^*(y)-\phi^*(x))dy=-\mu\gamma(x) \phi^*(x)&~~~\text{in}~\Omega,\\
\phi^*(x)=0&~~~\text{on}~\mathbb{R}^N\backslash\Omega,
\end{cases}
\end{equation*}
then $\mu^*$ is unique and positive.
\end{corollary}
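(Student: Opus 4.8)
The plan is to follow the strategy of Lemma \ref{lemma211}, exploiting the symmetry of the kernel $J$ together with the exterior Dirichlet condition $\phi^*\equiv0$ on $\mathbb{R}^N\backslash\Omega$. The first observation I would record is that this condition simplifies the nonlocal term: since $\phi^*(y)=0$ for $y\notin\Omega$ and $\int_{\mathbb{R}^N}J(x-y)dy=1$ by assumption (J), for $x\in\Omega$ we have
\begin{equation*}
\int_{\mathbb{R}^N}J(x-y)(\phi^*(y)-\phi^*(x))dy
=\int_{\Omega}J(x-y)\phi^*(y)dy-\phi^*(x).
\end{equation*}
All subsequent manipulations can therefore be carried out with one variable ranging over $\Omega$.

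For uniqueness, I would suppose that $(\mu_i,\phi_i)$ with $\phi_i>0$ $(i=1,2)$ are two such eigenpairs, multiply the equation for $\phi_1$ by $\phi_2$ and the equation for $\phi_2$ by $\phi_1$, and integrate both over $\Omega$. Because $J(x-y)=J(y-x)$, the two resulting bilinear terms $\int_{\Omega}\int_{\Omega}J(x-y)\phi_1(y)\phi_2(x)dy\,dx$ coincide after interchanging $x$ and $y$, so subtracting the two identities eliminates every nonlocal contribution and leaves
\begin{equation*}
(\mu_1-\mu_2)\int_{\Omega}\gamma(x)\phi_1(x)\phi_2(x)dx=0.
\end{equation*}
Since $\gamma(x)>0$ on $\bar{\Omega}$ and $\phi_1,\phi_2>0$ in $\Omega$, the integral is strictly positive, forcing $\mu_1=\mu_2$. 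This is exactly the argument used at the start of Lemma \ref{lemma211}.

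For positivity, I would multiply the equation by $\phi^*$ and integrate over $\Omega$; because $\phi^*$ vanishes outside $\Omega$, the outer integral may be extended to $\mathbb{R}^N$, and the standard symmetrization identity for a symmetric kernel gives
\begin{equation*}
\mu^*=\frac{\frac{1}{2}\int_{\mathbb{R}^N}\int_{\mathbb{R}^N}
J(x-y)(\phi^*(y)-\phi^*(x))^2dy\,dx}
{\int_{\Omega}\gamma(x)(\phi^*(x))^2dx}.
\end{equation*}
The denominator is strictly positive since $\gamma>0$ and $\phi^*>0$. The numerator is manifestly nonnegative, and it is in fact strictly positive: $\phi^*$ takes positive values inside $\Omega$ but vanishes on $\mathbb{R}^N\backslash\Omega$, so there is a set of pairs $(x,y)$ of positive measure with $x\in\Omega$, $y\notin\Omega$, $\phi^*(x)>0=\phi^*(y)$ and, using $J(0)>0$ together with the continuity of $J$, $J(x-y)>0$. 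Hence $\mu^*>0$.

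The main obstacle, mild in this case, is precisely the last step: verifying that the Dirichlet jump of $\phi^*$ across $\partial\Omega$ is genuinely detected by the quadratic form, i.e. that the numerator cannot vanish. This is where the exterior condition and the local positivity of the kernel ($J(0)>0$) are essential; without them one could only conclude $\mu^*\ge0$. Everything else reduces to the symmetry of $J$ and the positivity of $\gamma$ and $\phi^*$.
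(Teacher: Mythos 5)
Your proof is correct and follows essentially the route the paper intends: Corollary \ref{cor2112} is stated as a direct consequence of Lemma \ref{lemma211}, and you reproduce that lemma's uniqueness argument (symmetrize in $x,y$, subtract, use positivity of $\gamma\phi_1\phi_2$) together with the Rayleigh-quotient identity for positivity. The one point that genuinely needs extra care here — that the quadratic form cannot vanish because $\phi^*$ drops to zero outside $\Omega$ — you identify and settle correctly; note that hypothesis (J), specifically $\int_{\Omega}J(x-y)\,dy\not\equiv1$, hands you the required positive-measure set of pairs $(x,y)$ with $x\in\Omega$, $y\notin\Omega$, $J(x-y)>0$ even more directly than the argument via $J(0)>0$ and continuity.
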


\begin{lemma}\label{theorem212}
If the nonlocal weighted eigenvalue problem
\begin{equation*}
-d_I\int_{\Omega}J(x-y)(\phi(y)-\phi(x))dy+\gamma(x)\phi(x)=
\mu\beta(x)\phi(x),~~x\in\Omega
\end{equation*}
admits a unique positive principal eigenvalue $\mu_p$ with positive eigenfunction and there exists some positive function $\psi_{d_I}(x)\in L^2(\Omega)$ satisfying
$$L[\psi_{d_I}](x)=R_0\psi_{d_I}(x),$$
then $R_0=r(-\mathscr{F}A^{-1})=\frac{1}{\mu_p}$ and the following two conclusions hold:
\begin{description}
\item[(i)] $R_0\to\max\limits_{\bar{\Omega}}\{\frac{\beta(x)}{\gamma(x)}\}$ as $d_I\to0$;
\item[(ii)] $R_0\to\frac{\int_{\Omega}\beta(x)dx}{\int_{\Omega}\gamma(x)dx}$ as $d_I\to+\infty$.
\end{description}
\end{lemma}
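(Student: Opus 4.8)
The plan is to first establish the identity $R_0=\tfrac{1}{\mu_p}$ and then read off the two asymptotic limits from the weighted variational formula of Lemma \ref{lemma211}. The equality $R_0=r(-\mathscr{F}A^{-1})$ is immediate, since $R_0:=r(L)$ and the proof of Theorem \ref{theorem206} gives $L=-\mathscr{F}A^{-1}$, so the real content is $R_0=\tfrac{1}{\mu_p}$. To get this, I would start from the hypothesis $L[\psi_{d_I}]=R_0\psi_{d_I}$ with $\psi_{d_I}>0$, i.e.\ $-\mathscr{F}A^{-1}\psi_{d_I}=R_0\psi_{d_I}$, and set $w:=-A^{-1}\psi_{d_I}$. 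Since $S(A)<0$ (Proposition \ref{pro205}), $A^{-1}$ exists and $-A^{-1}=\int_0^\infty T(t)\,dt$ is positive, so $w\ge0$ and $\psi_{d_I}=-Aw$; in fact $w>0$ by the strong positivity of the semigroup $T(t)$ (recall $J(0)>0$ and $\Omega$ is connected). Substituting $\psi_{d_I}=-Aw$ gives $\mathscr{F}w=R_0(-Aw)$, that is,
$$-d_I\int_{\Omega}J(x-y)(w(y)-w(x))dy+\gamma(x)w(x)=\tfrac{1}{R_0}\beta(x)w(x),$$
which is precisely the weighted eigenvalue problem \eqref{2009} with eigenvalue $\tfrac{1}{R_0}$ and positive eigenfunction $w$. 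By the uniqueness established inside the proof of Lemma \ref{lemma211} (any two eigenvalues with positive eigenfunctions coincide), $\tfrac{1}{R_0}=\mu_p$, i.e.\ $R_0=\tfrac{1}{\mu_p}$.

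With $R_0=1/\mu_p$ in hand, (i) and (ii) become the statements $\mu_p(d_I)\to\min_{\bar\Omega}\{\gamma/\beta\}$ as $d_I\to0$ and $\mu_p(d_I)\to\frac{\int_\Omega\gamma}{\int_\Omega\beta}$ as $d_I\to+\infty$, which I would prove by adapting the argument of Theorem \ref{theorem2203}(ii)--(iii) to the weighted Rayleigh quotient of Lemma \ref{lemma211}. For the large-diffusion limit (ii), testing the quotient with $\varphi\equiv1$ gives $\mu_p(d_I)\le\frac{\int_\Omega\gamma}{\int_\Omega\beta}$, while dropping the nonnegative nonlocal term gives $\mu_p(d_I)\ge\min_{\bar\Omega}\{\gamma/\beta\}>0$, so $\mu_p(d_I)$ stays bounded. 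Letting $\phi_{d_I}$ be the positive eigenfunction normalized by $\|\phi_{d_I}\|_{L^\infty(\Omega)}=1$, the eigenvalue equation yields $\int_\Omega J(x-y)(\phi_{d_I}(y)-\phi_{d_I}(x))dy=\frac{\gamma-\mu_p\beta}{d_I}\phi_{d_I}\to0$ uniformly along any sequence $d_I\to\infty$, so Proposition 3.3 in \cite{AMRT2010} forces $\phi_{d_I}$ to converge uniformly to a nonzero constant. Integrating the equation over $\Omega$ (the nonlocal term vanishes by symmetry of $J$) gives $\int_\Omega\gamma\,\phi_{d_I}=\mu_p\int_\Omega\beta\,\phi_{d_I}$, and passing to the limit shows every subsequential limit of $\mu_p(d_I)$ equals $\frac{\int_\Omega\gamma}{\int_\Omega\beta}$, whence the claim.

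For the small-diffusion limit (i) the lower bound $\mu_p(d_I)\ge\inf_{\varphi}\frac{\int_\Omega\gamma\varphi^2}{\int_\Omega\beta\varphi^2}=\min_{\bar\Omega}\{\gamma/\beta\}$ holds for all $d_I>0$ by dropping the nonlocal term, so the reverse inequality $\limsup_{d_I\to0}\mu_p(d_I)\le\min_{\bar\Omega}\{\gamma/\beta\}$ is the main obstacle. The hard part will be that, as in Theorem \ref{theorem2203}(ii), one cannot simply pass to the limit in the eigenfunctions because the nonlocal operator has no regularizing effect; I would instead argue by contradiction. If $\mu_p(d_I)\ge\min_{\bar\Omega}\{\gamma/\beta\}+\varepsilon/2$ along a sequence $d_I\to0$, then on a small ball $B_r(x_0)$ around a minimizer $x_0$ of $\gamma/\beta$ the rewritten equation gives $\int_\Omega J(x-y)(\phi_{d_I}(x)-\phi_{d_I}(y))dy\ge\frac{\beta_{\min}\varepsilon}{4d_I}\phi_{d_I}$, and a sub--super solution comparison against the Dirichlet-type nonlocal eigenvalue problem on $B_r(x_0)$ forces its $d_I$-independent principal eigenvalue $\lambda_1$ to equal $\frac{\beta_{\min}\varepsilon}{4d_I}$, a contradiction as $d_I\to0$. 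Combining the two bounds yields $\mu_p(d_I)\to\min_{\bar\Omega}\{\gamma/\beta\}$, and hence $R_0\to\max_{\bar\Omega}\{\beta/\gamma\}$; the remaining estimates are routine.
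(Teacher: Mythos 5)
Your proposal is correct and follows essentially the same route as the paper: the identity $R_0=1/\mu_p$ via $\varphi=-A^{-1}\psi_{d_I}$ combined with the uniqueness statement of Lemma \ref{lemma211}, the variational bounds from the weighted Rayleigh quotient, a sub--super solution contradiction against a $d_I$-independent Dirichlet-type nonlocal eigenvalue on a small ball for the limit $d_I\to0$, and the compactness-to-constant argument (via Proposition 3.3 of \cite{AMRT2010}) plus integration of the eigenvalue equation for $d_I\to+\infty$. The only cosmetic difference is that you phrase the limits in terms of $\mu_p$ rather than $R_0$ and dispense with monotonicity in $d_I$ by showing all subsequential limits agree, which changes nothing of substance.
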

\begin{proof}
Note that
\begin{equation*}
\beta(x)\int_{0}^{\infty}T(t)\psi_{d_I} dt=R_0\psi_{d_I}(x).
\end{equation*}
In view of \eqref{2008}, we have $-A^{-1}\psi_{d_I}=\int_{0}^{\infty}T(t)\psi_{d_I} dt$. Accordingly,
\begin{equation}\label{2012}
-\beta(x)A^{-1}[\psi_{d_I}](x)=R_0\psi_{d_I}(x).
\end{equation}
Let $\varphi=-A^{-1}\psi_{d_I}$. Obviously, $\varphi$ is positive. It follows from \eqref{2012} that $-A\varphi=\frac{1}{R_0}\beta(x)\varphi$. That is $(\frac{1}{R_0}, \varphi)$ satisfies
\begin{equation*}
-d_I\int_{\Omega}J(x-y)(\varphi(y)-\varphi(x))dy+\gamma(x)\varphi(x)=
\frac{1}{R_0}\beta(x)\varphi(x).
\end{equation*}
Following Lemma \ref{lemma211}, it is clear that $(\frac{1}{R_0}, \varphi)$ is the principal eigenpair of \eqref{2009}. Hence, $R_0=r(-\mathscr{F}A^{-1})=\frac{1}{\mu_p}$.
Meanwhile, $R_0$ can be characterized by
\begin{equation}\label{star}
R_0=\sup_{\scriptstyle \varphi\in L^2(\Omega) \atop\scriptstyle
\varphi\neq0}\frac{\int_{\Omega}\beta(x)\varphi^2(x)dx}{\frac{d_I}{2}\int_{\Omega}\int_{\Omega}
J(x-y)(\varphi(y)-\varphi(x))^2dydx+\int_{\Omega}\gamma(x)
\varphi^2(x)dx}.
\end{equation}

Now, we prove (i) and (ii).
Denote $R_0=R_0(d_I)$ and $\eta(x)=\frac{\beta(x)}{\gamma(x)}$. For any $v\in L^2(\Omega)$ and $v\neq0$, we have
\begin{eqnarray*}
&& \frac{\int_{\Omega}\beta(x)v^2(x)dx}{\frac{d_I}{2}
\int_{\Omega}\int_{\Omega}
J(x-y)(v(y)-v(x))^2dydx+\int_{\Omega}\gamma(x)
v^2(x)dx}\\
&\le&
\frac{\max_{\bar{\Omega}}
\eta(x)\int_{\Omega}\gamma(x)v^2(x)dx}{\frac{d_I}{2}
\int_{\Omega}\int_{\Omega}
J(x-y)(v(y)-v(x))^2dydx+\int_{\Omega}\gamma(x)
v^2(x)dx}\\
&\le& \max_{\bar{\Omega}}
\eta(x).
\end{eqnarray*}
Hence, $R_0(d_I)\le\max_{\bar{\Omega}}
\eta(x):=\eta_*$. To our goal, we only need to prove that $\liminf\limits_{d_I\to0}R_0(d_I)\ge\eta_*$. On the contrary, assume there exists some $\varepsilon>0$ such that
$$\liminf_{d_I\to0}R_0(d_I)\le\eta_*-\varepsilon.$$
By the definition of liminf, there is some $d_0>0$ such that
$$R_0(d_I)\le\eta_*-\frac\varepsilon2$$
for any $d_I\le d_0$. Additionally,
the continuity of $\eta(x)$ gives that there exists some $x_*\in\bar{\Omega}$ so that
\begin{equation*}
\eta_*\le\eta(x)+\frac\varepsilon4~\text{for any}~x\in B_\rho(x_*),
\end{equation*}
in which $B_\rho(x_*)$ is a ball with the center $x_*$ and the radius $\rho$. Hence,
\begin{equation*}
R_0(d_I)\le\eta(x)-\frac\varepsilon4~~~\text{for all}~x\in B_\rho(x_*).
\end{equation*}
It is noticed that
\begin{eqnarray*}
d_I\int_{\Omega}J(x-y)(\psi_{d_I}(x)-\psi_{d_I}(y))dy &=& \left(\frac{\beta(x)}{R_0(d_I)}-\gamma(x)\right)\psi_{d_I}(x)\\
&\ge& \left(\frac{\beta(x)}{\eta(x)-\frac\varepsilon4}
-\gamma(x)\right)\psi_{d_I}(x)\\
&=& \frac{\varepsilon\gamma(x)}
{4(\eta(x)-\frac\varepsilon4)}\psi_{d_I}(x)\\
&\ge& \frac{\varepsilon\gamma(x)}{4\eta_*}\psi_{d_I}(x).
\end{eqnarray*}
On the other hand, it follows from \cite{MR2009} that the problem
\begin{equation*}
\begin{cases}
\int_{\mathbb{R}^N}J(x-y)(v(y)-v(x))dy
=-\mu\max_{\bar{\Omega}}\{\gamma(x)\}v(x)~~&\text{in}~B_\rho(x_*),\\
v(x)=0~~&\text{on}~\mathbb{R}^N\backslash B_\rho(x_*)
\end{cases}
\end{equation*}
admits a principal eigenpair $(\tilde{\mu}, \varphi^*(x))$ and $0<\tilde{\mu}<\frac{1}{\max_{\bar{\Omega}}\gamma(x)}$. Now, let
\begin{equation*}
\underline{\Psi}(x)=\frac{\varphi^*(x)}
{\inf_{B_\rho(x_*)}\varphi^*(x)},~~\overline{\Psi}(x)=K\psi_{d_I}(x)
~~\text{for constant}~K>1.
\end{equation*}
For the simple calculation, $\underline{\Psi}(x)$ and $\overline{\Psi}(x)$ are a pair of sub-super solution of the following linear problem
\begin{equation}\label{2201}
\begin{cases}
\int_{\mathbb{R}^N}J(x-y)(u(y)-u(x))dy
=-\frac{\varepsilon\gamma(x)}{4d_I\eta_*}u(x)~~&\text{in}~B_\rho(x_*),\\
u(x)=0~~&\text{on}~\mathbb{R}^N\backslash B_\rho(x_*)
\end{cases}
\end{equation}
when $d_I\le\min\{d_0,\frac{\varepsilon}{4\tilde{\mu}\eta_*}\}$. Then, there is a positive solution of \eqref{2201}. Following Corollary \ref{cor2112}, it is obtained that $\mu_p^\prime:=\frac{\varepsilon}{4d_I\eta_*}$ is a principal eigenvalue of \eqref{2201} which depends on the parameter $d_I$ and this is a contradiction.

Next, we prove (ii). By the variational characterization of $R_0(d_I)$, it is easily seen that
\begin{equation*}
R_0(d_I)\ge \frac{\int_{\Omega}\beta(x)dx}{\int_{\Omega}\gamma(x)dx}.
\end{equation*}
Since $R_0(d_I)$ is nondecreasing on $d_I$, the limit of $R_0(d_I)$ exists as $d_I\to+\infty$. Noticed that $(R_0(d_I), \psi_{d_I}(x))$ satisfies
\begin{equation}\label{2014}
-d_I\int_{\Omega}J(x-y)(\psi_{d_I}(y)-\psi_{d_I}(x))dy
+\gamma(x)\psi_{d_I}(x)=
\frac{1}{R_0(d_I)}\beta(x)\psi_{d_I}(x).
\end{equation}
Choose some sequence $\{d_{I,n}\}_n^\infty$ satisfying $d_{I,n}\to+\infty$ as $n\to+\infty$ and normalized $\psi_{d_{I,n}}(x)$ as $\|\psi_{d_{I,n}}\|_{L^\infty(\Omega)}=1$. Since there is some $n_0>0$ such that
\begin{equation*}
\Delta(x):=\int_{\Omega}J(x-y)dy+\frac{\gamma(x)
-\frac{\beta(x)}{R_0(d_{I,n})}}{d_{I,n}}>0
\end{equation*}
for all $n\ge n_0$, we have
\begin{equation*}
\psi_{d_{I,n}}(x)=\frac{\int_{\Omega}J(x-y)\psi_{d_{I,n}}(y)dy}
{\Delta(x)}
\end{equation*}
for all $n\ge n_0$. Thus, $\psi_{d_{I,n}}(x)\to\psi^*$ strongly in $L^2(\Omega)$ as $n\to+\infty$. This implies that $\psi^*$ satisfies
\begin{equation*}
\int_{\Omega}J(x-y)(\psi^*(y)-\psi^*(x))dy=0.
\end{equation*}
Hence, $\psi^*$ is a positive constant. Integrating both sides of \eqref{2014} with $d_{I,n}$ and $\psi_{d_{I,n}}(x)$ on $\Omega$ obtains that
\begin{equation*}
\int_{\Omega}\gamma(x)\psi_{d_{I,n}}(x)dx
=\frac{1}{R_0(d_{I,n})}\int_{\Omega}\beta(x)\psi_{d_{I,n}}(x)dx.
\end{equation*}
Letting $n\to+\infty$, there holds
\begin{equation*}
\lim_{n\to+\infty}R_0(d_{I,n})=\frac{\int_{\Omega}\beta(x)dx}
{\int_{\Omega}\gamma(x)dx}.
\end{equation*}
This ends the proof.
\end{proof}

\begin{remark}{\rm
Comparing to the corresponding elliptic problem, the operator
\begin{equation*}
L[\varphi](x)=\beta(x)\int_{0}^{\infty}T(t)\varphi dt
\end{equation*}
is not a compact operator. Thus, $r(L)$ may not be a principal eigenvalue of $L$ and the basic reproduction number $R_0$ can not be characterized as \eqref{star} in general. However, \eqref{star} is still able to discuss the dynamic behavior of system \eqref{101} as a threshold value.
}
\end{remark}

\section{The disease-free equilibrium}\label{sec3}
\noindent

In this section, we major in discussing the existence and stability of the disease-free equilibrium of \eqref{101}. That is, we consider the stationary problem of system \eqref{101}:
\begin{equation}\label{301}
\begin{cases}
d_S\int_{\Omega}J(x-y)(S(y)-S(x))dy
=\frac{\beta(x)SI}{S+I}-\gamma(x)I,~& x\in\Omega,\\
d_I\int_{\Omega}J(x-y)(I(y)-I(x))dy
=-\frac{\beta(x)SI}{S+I}+\gamma(x)I,~& x\in\Omega.
\end{cases}
\end{equation}

\begin{definition}\label{def301}
We say that a steady state $(\tilde{S}(x),\tilde{I}(x))$ of system \eqref{101} is globally stable if the solutions $(S_*(x,t),I_*(x,t))$ of \eqref{101} satisfy
\begin{equation*}
\lim_{t\to+\infty}(S_*(x,t),I_*(x,t))=(\tilde{S}(x),\tilde{I}(x))
\end{equation*}
for any initial data $(S_0(x),I_0(x))$ that satisfies $S_0(x),I_0(x)>0$ in $\Omega$ and $S_0(x),I_0(x)\in C(\bar{\Omega})$.
\end{definition}

\begin{lemma}\label{lemma301}
System \eqref{301} admits a disease-free equilibrium $(\hat{S},0)$, it is unique and given by $\hat{S}=\frac{N}{|\Omega|}$ on $\bar{\Omega}$.
\end{lemma}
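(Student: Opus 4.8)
The plan is to substitute the defining feature of a disease-free equilibrium, namely $I\equiv0$, directly into the stationary system \eqref{301} and reduce it to a single homogeneous nonlocal equation for $S$. With $I\equiv0$, the convention that $\frac{\beta(x)SI}{S+I}$ vanishes whenever $I=0$ makes the reaction terms in both equations equal to zero; the second equation then reads $d_I\int_{\Omega}J(x-y)(0-0)\,dy=0$, which holds automatically, while the first collapses to
\begin{equation*}
d_S\int_{\Omega}J(x-y)(S(y)-S(x))\,dy=0,\qquad x\in\Omega.
\end{equation*}
Thus the entire problem reduces to characterizing the solutions of this scalar equation.

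Next I would show that any solution $S$ of the displayed equation must be constant on $\bar{\Omega}$. The cleanest route is the energy identity: multiply by $S(x)$, integrate over $\Omega$, and symmetrize using $J(x-y)=J(y-x)$ to obtain
\begin{equation*}
\frac{d_S}{2}\int_{\Omega}\int_{\Omega}J(x-y)(S(y)-S(x))^2\,dy\,dx=0.
\end{equation*}
Since the integrand is nonnegative, $S(y)=S(x)$ for all pairs with $J(x-y)>0$. Because $J$ is continuous with $J(0)>0$, there is $\delta>0$ such that $J(z)>0$ for $|z|<\delta$, so $S$ is constant on every $\delta$-ball; chaining these together along paths in the connected domain $\Omega$ forces $S$ to equal a single constant $c$ on $\bar{\Omega}$. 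Alternatively, this constancy follows directly from Proposition 3.3 in \cite{AMRT2010}, the same result already invoked in the proof of Theorem \ref{theorem2203}.

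Finally I would pin down the value of $c$ using the conservation law established in the introduction. Since $I\equiv0$, the total-population identity $\int_{\Omega}(S+I)\,dx=N$ becomes $\int_{\Omega}c\,dx=c|\Omega|=N$, giving $c=N/|\Omega|$. This simultaneously yields existence, since the pair $(N/|\Omega|,0)$ clearly solves \eqref{301}, and uniqueness, since every disease-free steady state has $S$ equal to this same constant. I expect the only step requiring genuine care to be the passage from ``$S$ constant on overlapping balls'' to ``$S$ globally constant'', that is, the combined use of the connectedness of $\Omega$ and the positivity of $J$ near the origin; the remaining steps are direct computations.
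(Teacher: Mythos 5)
Your proposal is correct and follows essentially the same route as the paper: reduce to $\int_{\Omega}J(x-y)(S(y)-S(x))\,dy=0$, conclude that $S$ is constant (the paper cites Proposition 3.3 of \cite{AMRT2010}, which is exactly the energy-identity-plus-connectedness argument you spell out), and then determine the constant from $\int_{\Omega}S\,dx=N$. The only difference is that you unpack the cited proposition's proof; no gap.
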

\begin{proof}
Let $(\tilde{S},0)$ be any disease-free equilibrium. Then, following \eqref{301}, we obtain that
\begin{equation*}
\int_{\Omega}J(x-y)(\tilde{S}(y)-\tilde{S}(x))dy=0~~\text{in}~\Omega.
\end{equation*}
It is well-known from \cite[Proposition 3.3]{AMRT2010} that $\tilde{S}(x)$ is a constant. And since $\int_{\Omega}\tilde{S}(x)dx=N$, we have $\tilde{S}(x)=\frac{N}{|\Omega|}$ on $\bar{\Omega}$. The proof is complete.
\end{proof}

Then, we have the following globally stability result.
\begin{theorem}\label{theorem302}
If $R_0<1$, then all the positive solutions of \eqref{101} converge to the disease-free equilibrium $\left(\frac{N}{|\Omega|}, 0\right)$ as $t\to+\infty$.
\end{theorem}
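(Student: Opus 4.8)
The plan is to first show that the infected density $I$ goes extinct and then use mass conservation to force the susceptible density $S$ to relax to the constant $N/|\Omega|$. I would begin by recasting $R_0<1$ as a usable spectral inequality. By Corollary~\ref{remark209}, $R_0<1$ is equivalent to $\lambda_p(d_I)>0$, and since $-\lambda_p(d_I)=S(A+\mathscr F)$ (as recorded after Theorem~\ref{theorem206}), the bounded operator $\mathcal M:=A+\mathscr F$, namely $\mathcal M[u](x)=d_I\int_\Omega J(x-y)(u(y)-u(x))\,dy+(\beta(x)-\gamma(x))u(x)$, has spectral bound $S(\mathcal M)=-\lambda_p(d_I)<0$. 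Because $\mathcal M$ is bounded, the semigroup $e^{t\mathcal M}$ is uniformly continuous and its growth bound coincides with $S(\mathcal M)$; hence $\|e^{t\mathcal M}\|_{\mathcal L(X)}\le Me^{-\delta t}$ for some $M\ge1$ and some $0<\delta<\lambda_p(d_I)$.

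Next I would prove extinction of $I$. Since $0\le\tfrac{S}{S+I}\le1$, the nonlinearity obeys $0\le\frac{\beta(x)SI}{S+I}\le\beta(x)I$, so the second equation of \eqref{101} gives $\partial_tI\le\mathcal M[I]$; that is, $I$ is a subsolution of $\partial_tw=\mathcal M[w]$ with $w(\cdot,0)=I_0$. The comparison principle for nonlocal dispersal equations then yields $0\le I(\cdot,t)\le e^{t\mathcal M}I_0$, so $\|I(\cdot,t)\|_{C(\bar\Omega)}\le Me^{-\delta t}\|I_0\|_{C(\bar\Omega)}\to0$ uniformly, indeed exponentially, as $t\to+\infty$.

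Finally I would treat $S$. First, a comparison argument against the spatially constant supersolution $\|S_0\|_{\infty}+\gamma_{\max}\int_0^t\|I(\cdot,s)\|_\infty\,ds$, whose forcing is integrable in time by the exponential decay of $I$, shows $S$ is bounded uniformly on $\bar\Omega\times[0,\infty)$. By mass conservation $\int_\Omega(S+I)=N$ and $\int_\Omega I\to0$, the average $c(t):=\tfrac1{|\Omega|}\int_\Omega S(\cdot,t)\to N/|\Omega|$. Writing $S=c(t)+v$ with $\int_\Omega v=0$ and using $\mathcal L_S[\mathrm{const}]=0$, where $\mathcal L_S[u]=d_S\int_\Omega J(x-y)(u(y)-u(x))\,dy$, one finds $\partial_tv=\mathcal L_S[v]+(g-\bar g)$ with $g:=-\frac{\beta SI}{S+I}+\gamma I\to0$ uniformly and $\bar g$ its spatial average. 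Since $J$ is symmetric, $\mathcal L_S$ is self-adjoint on $L^2(\Omega)$, its top eigenvalue $0$ is simple with constant eigenfunction (Proposition~3.3 of \cite{AMRT2010}), and its essential spectrum $\{-d_S\int_\Omega J(x-y)\,dy:x\in\bar\Omega\}$ is bounded away from $0$; hence there is a spectral gap and $e^{t\mathcal L_S}$ decays like $e^{-\kappa t}$ on the mean-zero subspace. The variation-of-constants formula together with $\|g-\bar g\|\to0$ then forces $\|v(\cdot,t)\|_{L^2}\to0$, and this upgrades to uniform decay via the representation $\partial_tv+d_S\big(\int_\Omega J(x-y)\,dy\big)v=d_S\int_\Omega J(x-y)v(y)\,dy+(g-\bar g)$, since the convolution maps $L^2(\Omega)$ into $C(\bar\Omega)$ and the multiplier has a positive lower bound. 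Thus $S(\cdot,t)\to N/|\Omega|$.

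The hard part is the convergence of $S$. The stated lack of a regularizing effect means orbits need not be precompact, so no LaSalle-type argument is available; instead one must exploit the self-adjoint spectral structure of $\mathcal L_S$, and in particular that its essential spectrum, produced by the multiplication part $-d_S\big(\int_\Omega J(x-y)\,dy\big)u$, stays strictly below the simple top eigenvalue $0$ — precisely where assumption (J) and the boundedness of $\Omega$ enter. Producing the uniform-in-time bound on $S$ and, above all, converting the $L^2$-decay of the mean-zero part into the pointwise convergence demanded by Definition~\ref{def301} are the remaining delicate steps.
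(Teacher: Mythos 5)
Your proposal is correct, and the treatment of $S$ is essentially the paper's: the same decomposition of $S_*$ into its spatial average plus a mean-zero part $\hat S_1$, the same spectral-gap constant $\alpha$ from \eqref{3003} (your ``essential spectrum of $\mathcal L_S$ bounded away from the simple eigenvalue $0$'' is exactly how $\alpha>0$ is obtained from \cite{AMRT2010}), an $L^2$ decay estimate for the mean-zero part, and the same upgrade to uniform convergence through the representation \eqref{305}--\eqref{306} using the positive lower bound of $a(x)=d_S\int_\Omega J(x-y)\,dy$. Where you genuinely diverge is the extinction of $I$. The paper does not invoke the growth bound of $e^{t\mathcal M}$ directly; because $\lambda_p(d_I)$ need not be a principal eigenvalue of $\mathcal M$ (the lack of regularizing effect), it approximates $m(x)=-d_I\int_\Omega J(x-y)\,dy+\beta(x)-\gamma(x)$ by functions $m_n$ flattened near the maximum so that Lemma \ref{lemma207} yields an honest principal eigenpair $(\lambda_p^n,\phi_n)$, uses Remark \ref{re208} to keep $\lambda_p^n$ close to $\lambda_p(d_I)$, and builds the explicit supersolution $Me^{-\frac12\lambda_p(d_I)t}\phi_n(x)$. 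You instead observe that $\mathcal M$ is a \emph{bounded} generator, so the semigroup is uniformly continuous, the spectral mapping theorem gives $\omega_0(e^{t\mathcal M})=S(\mathcal M)=-\lambda_p(d_I)<0$ (via Corollary \ref{remark209}), and comparison with $e^{t\mathcal M}I_0$ finishes. This is a legitimate and cleaner route: it sidesteps the entire approximate-eigenfunction construction, at the cost of relying on the abstract identity $S(A+\mathscr F)=-\lambda_p(d_I)$ and yielding a nonexplicit rate $\delta<\lambda_p(d_I)$ rather than the paper's explicit supersolution with rate $\tfrac12\lambda_p(d_I)$. Both arguments rest on the same input, Corollary \ref{remark209}, so no logical gap is introduced.
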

\begin{proof}
Since $R_0<1$, we have $-\lambda_p(d_I)=\lambda_*<0$ according to Corollary \ref{remark209}. That is $\lambda_p(d_I)>0$. Recall that $m(x)=-d_I\int_{\Omega}J(x-y)dy+\beta(x)-\gamma(x)$. Moreover, since $m(x)$ is continuous on $\bar{\Omega}$, there exists some $x_0\in\bar{\Omega}$ such that $m(x_0)=\max\limits_{x\in\bar{\Omega}}m(x)$. Define a function sequence as follows:
\begin{equation*}
m_n(x)=
\begin{cases}
m(x_0),~& x\in B_{x_0}(\frac1n),\\
m_{n,1}(x),~& x\in(B_{x_0}(\frac2n)\backslash B_{x_0}(\frac1n)),\\
m(x),~& x\in\Omega\backslash B_{x_0}(\frac2n),
\end{cases}
\end{equation*}
where $B_{x_0}(\frac1n)=\{x\in\Omega|~|x-x_0|<\frac1n\}$, $m_{n,1}(x)$ satisfies $m_{n,1}\le m(x_0)$, and $m_{n,1}(x)$ is continuous in $\Omega$. Indeed, $m_{n,1}(x)$ exists if only we take $n$ is large enough, denoted by $n\ge n_0>0$. Thus, Lemma \ref{lemma207} implies that the eigenvalue problem
\begin{equation*}
d_I\int_{\Omega}J(x-y)\phi(y)dy+m_n(x)\phi(x)=-\lambda\phi(x)
\end{equation*}
admits a principal eigenpair, denoted by $(\lambda_p^n(d_I),\phi_n)$. According to Remark \ref{re208}, there exists some $n_1\ge n_0$ such that for any $n\ge n_1$
\[
\lambda_p^n(d_I)\ge\frac12\lambda_p(d_I)
-\|m_n-m\|_{L^\infty}.
\]
Normalizing $\phi_n(x)$ as $\|\phi_n\|_{L^\infty(\Omega)}=1$ and letting $\overline{u}(x,t)=Me^{-\frac12\lambda_p(d_I)t}\phi_n(x)$, the direct calculation yields that
\begin{eqnarray*}
&&\frac{\partial\overline{u}(x,t)}{\partial t}-d_I\int_{\Omega}J(x-y)(\overline{u}(y,t)-\overline{u}(x,t))dy
-\frac{\beta(x)\overline{u}S_*}{\overline{u}+S_*}+\gamma(x)\overline{u}\\
&\ge& -\frac12\lambda_p(d_I)Me^{-\frac12\lambda_p(d_I)t}\phi_n(x)
-Me^{-\frac12\lambda_p(d_I)t}
\left[d_I\int_{\Omega}J(x-y)\phi_n(y)dy+m_n(x)\phi_n(x)\right]\\
&&+(m_n(x)-m(x))Me^{-\frac12\lambda_p(d_I)t}\phi_n(x)\\
&\ge& \left[ \lambda_p^n(d_I)-\frac12\lambda_p(d_I)
+(m_n(x)-m(x)) \right ]Me^{-\frac12\lambda_p(d_I)t}\phi_n(x)\ge0,
\end{eqnarray*}
provided $n\ge n_1$. Take $M$ large enough such that $\overline{u}(x,0)\ge I_0(x)$. Then, the comparison principle yields that $I_*(x,t)\le \overline{u}(x,t)$ for $x\in\Omega$ and $t>0$. Consequently, we get that $I_*(x,t)\to0$ uniformly on $\bar{\Omega}$ as $t\to+\infty$.

Now, it is left to prove that
$S_*(x,t)\to\frac{N}{|\Omega|}$ uniformly on $\bar{\Omega}$ as $t\to+\infty$.
By the above discussion and the continuity of $\beta(x)$ and $\gamma(x)$, there exists some $C_0>0$ such that
\begin{equation}\label{302}
\left\|\gamma I_*-
\frac{\beta S_*I_*}{S_*+I_*}\right\|_{L^\infty(\Omega)}\le C_0e^{-\frac12\lambda_p(d_I)t}.
\end{equation}
Define
\begin{equation}\label{3003}
\alpha=\alpha(J,\Omega)=\inf_{u\in L^2(\Omega),\int_{\Omega}u=0,u\not\equiv0}
\frac{\frac{d_S}{2}\int_{\Omega}\int_{\Omega}J(x-y)(u(y)-u(x))^2dydx}
{\int_{\Omega}u^2(x)dx}.
\end{equation}
Following Proposition 3.4 and Lemma 3.5 in \cite{AMRT2010}, we get
\begin{equation*}
0<\alpha\le d_S\min_{x\in\bar{\Omega}}\int_{\Omega}J(x-y)dy.
\end{equation*}
Meanwhile, by the same method of the proof of Lemma 3.5 in \cite{AMRT2010}, we can obtain that
\begin{equation*}
\lambda_p(d_I)\le\min_{\bar{\Omega}}\left(
d_I\int_{\Omega}J(x-y)dy+\gamma(x)-\beta(x)\right).
\end{equation*}
Set
$$S_*(x,t)=\hat{S}_1(x,t)+\frac{1}{|\Omega|}\int_{\Omega}S_*(x,t)dx.$$
Due to $I_*(x,t)\to0$ uniformly on $\bar{\Omega}$ as $t\to+\infty$, we know $\int_{\Omega}S_*(x,t)dx\to N$ as $t\to+\infty$. Thus, we get
\begin{equation*}
\frac{1}{|\Omega|}\int_{\Omega}S_*(x,t)dx\to \frac{N}{|\Omega|}
~~\text{as}~t\to+\infty.
\end{equation*}
Note that $\int_{\Omega}\hat{S}_1(x,t)dx=0$ and $\hat{S}_1(x,t)$ satisfies
\begin{equation}\label{303}
\frac{\partial \hat{S}_1(x,t)}{\partial t}=d_S\int_{\Omega}J(x-y)
(\hat{S}_1(y,t)-\hat{S}_1(x,t))dy+f(x,t),~~x\in\Omega,~t>0,
\end{equation}
where
\begin{equation*}
f(x,t)=\gamma(x)I_*-\frac{\beta(x)S_*I_*}{S_*+I_*}
-\frac{1}{|\Omega|}\int_{\Omega}
\left(\gamma(x)I_*-\frac{\beta(x)S_*I_*}{S_*+I_*}\right)dx.
\end{equation*}
According to \eqref{302}, there exists some positive constant $c_*>0$ such that
$$|f(x,t)|\le c_*e^{-\frac12\lambda_p(d_I)t}.$$
Now, let $W(t)=\int_{\Omega}\hat{S}_1^2(x,t)dx$. Hence, the direct calculation yields that
\begin{eqnarray*}
\frac{dW(t)}{dt} &=& 2\int_{\Omega}\hat{S}_1(x,t)
\frac{\partial\hat{S}_1(x,t)}{\partial t}dx\\
&=& 2\int_{\Omega}\hat{S}_1(x,t)\left[d_S
\int_{\Omega}J(x-y)(\hat{S}_1(y,t)-\hat{S}_1(x,t))dy+f(x,t)\right]dx\\
&=& -d_S\int_{\Omega}\int_{\Omega}J(x-y)
(\hat{S}_1(y,t)-\hat{S}_1(x,t))^2dydx+
2\int_{\Omega}\hat{S}_1(x,t)f(x,t)dx\\
&\le& -2\alpha W(t)+4c_*Ne^{-\frac12\lambda_p(d_I)t}.
\end{eqnarray*}
This implies that
\begin{equation}\label{304}
\begin{aligned}
W(t)\leq& W(0)e^{-2\alpha t}+ce^{-2\alpha t}\int_{0}^{t}
e^{(2\alpha-\frac12\lambda_p(d_I))s}ds\\
=&
\begin{cases}
(W(0)+ct)e^{-2\alpha t}~& \text{if}~\lambda_p(d_I)=4\alpha,\\
c_1e^{-2\alpha t}+c_2e^{-\frac12\lambda_p(d_I) t}~& \text{if}~\lambda_p(d_I)\neq4\alpha
\end{cases}
\end{aligned}
\end{equation}
for some positive constants $c, c_1$ and $c_2$. On the other hand, it follows from \eqref{303} that
\begin{equation}\label{305}
\hat{S}_1(x,t)=\hat{S}_1(x,0)e^{-a(x)t}+e^{-a(x)t}\int_{0}^{t}
e^{a(x)s}\left[d_S\int_{\Omega}J(x-y)\hat{S}_1(y,s)dy+f(x,s)\right]ds,
\end{equation}
where $a(x)=d_S\int_{\Omega}J(x-y)dy$. By H\"{o}lder inequality, we have
\begin{equation}\label{306}
\int_{\Omega}J(x-y)\hat{S}_1(y,s)dy\le c_3W^{\frac12}(t)
\end{equation}
for some positive constant $c_3$. Then, combining \eqref{304}-\eqref{306}, it can be obtained that
\begin{equation*}
|\hat{S}_1(x,t)|\to0~~\text{as}~~t\to+\infty.
\end{equation*}
Consequently, we have
\begin{equation*}
S_*(x,t)\to\frac{N}{|\Omega|}~~\text{uniformly on}~\bar{\Omega}~\text{as}~t\to+\infty.
\end{equation*}
This completes the proof.
\end{proof}

\begin{remark}{\rm
Theorem \ref{theorem302} implies that when $R_0<1$, the epidemic disease will be extinct.
}
\end{remark}

\section{The endemic equilibrium}\label{sec4}
\noindent

In this section, we consider the existence, uniqueness of the positive solutions of \eqref{301} which is the so-called endemic equilibrium of \eqref{101}. Also, the longtime behavior of positive solutions of \eqref{101} is discussed.

\begin{lemma}\label{lemma401}
The pair of $(\tilde{S}(x),\tilde{I}(x))$ is a solution of \eqref{301} if and only if $(\tilde{S}(x),\tilde{I}(x))$ is a solution of
\begin{equation*}
\begin{aligned}
&k=d_S\tilde{S}+d_I\tilde{I},~x\in\Omega,\\
&d_I\int_{\Omega}J(x-y)(\tilde{I}(y)-\tilde{I}(x))dy
+\frac{\beta(x)\tilde{S}\tilde{I}}{\tilde{S}+\tilde{I}}
-\gamma(x)\tilde{I}=0,~x\in\Omega,\\
&N=\int_{\Omega}(\tilde{S}(x)+\tilde{I}(x))dx,
\end{aligned}
\end{equation*}
where $k$ is some positive constant.
\end{lemma}
\begin{proof}
Suppose $(\tilde{S}(x),\tilde{I}(x))$ is a solution of \eqref{301}. Then, adding the two equations of \eqref{301} yields that
\begin{equation*}
\int_{\Omega}J(x-y)[(d_S\tilde{S}(y)+d_I\tilde{I}(y))
-(d_S\tilde{S}(x)+d_I\tilde{I}(x))]dy=0,~~x\in\Omega.
\end{equation*}
Thus, there is some constant $k$ according to Proposition 3.3 in \cite{AMRT2010} such that
$$d_S\tilde{S}(x)+d_I\tilde{I}(x)=k,~x\in\Omega.$$
Meanwhile, the other cases are obvious.

In turn, if $d_S\tilde{S}(x)+d_I\tilde{I}(x)=k$, we have
\begin{equation*}
d_S\int_{\Omega}J(x-y)(\tilde{S}(y)-\tilde{S}(x))dy+
d_I\int_{\Omega}J(x-y)(\tilde{I}(y)-\tilde{I}(x))dy=0,~x\in\Omega.
\end{equation*}
Then,
\begin{equation*}
\begin{aligned}
&d_S\int_{\Omega}J(x-y)(\tilde{S}(y)-\tilde{S}(x))dy\\
=&-d_I\int_{\Omega}J(x-y)(\tilde{I}(y)-\tilde{I}(x))dy
=\frac{\beta(x)\tilde{S}\tilde{I}}{\tilde{S}+\tilde{I}}
-\gamma(x)\tilde{I},
\end{aligned}
\end{equation*}
which implies that $(\tilde{S},\tilde{I})$ satisfies \eqref{301}. This ends the proof.
\end{proof}

Let $S(x):=\frac{\tilde{S}(x)}{k}$ and $I(x):=\frac{d_I\tilde{I}(x)}{k}$, where $k$ is defined as in Lemma \ref{lemma401}. Then, we have the following result.
\begin{lemma}\label{lemma402}
The pair $(\tilde{S}(x), \tilde{I}(x))$ is a solution of \eqref{301} if and only if $(S(x),I(x))$ is a solution of
\begin{equation}\label{401}
\left\{
\begin{aligned}
&1=d_SS(x)+I(x),~x\in\Omega,\\
&0=d_I\int_{\Omega}J(x-y)(I(y)-I(x))dy+
(\beta(x)-\gamma(x))I-\frac{d_S\beta(x)I^2}{d_SI+d_I(1-I)},~x\in\Omega,\\
&k=\frac{d_IN}{\int_{\Omega}(d_IS(x)+I(x))dx}.
\end{aligned}
\right.
\end{equation}
\end{lemma}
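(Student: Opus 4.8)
The plan is to combine Lemma \ref{lemma401} with the explicit change of variables and to check that, under the substitution $\tilde S = kS$ and $\tilde I = kI/d_I$ (equivalently $S=\tilde S/k$, $I=d_I\tilde I/k$), each of the three equations characterizing a solution of \eqref{301} in Lemma \ref{lemma401} is turned into the corresponding equation of \eqref{401}, and conversely. Since $k$, $d_S$ and $d_I$ are all positive, every manipulation below is an equivalence, so it suffices to transform the equations one at a time.

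First I would treat the two linear relations, which are immediate. Substituting $\tilde S=kS$ and $\tilde I=kI/d_I$ into $k=d_S\tilde S+d_I\tilde I$ gives $k=k(d_SS+I)$, and dividing by $k>0$ yields the first equation $1=d_SS+I$ of \eqref{401}. The same substitution in the conservation identity $N=\int_\Omega(\tilde S+\tilde I)\,dx$ produces $N=\frac{k}{d_I}\int_\Omega(d_IS+I)\,dx$, which rearranges to the third equation of \eqref{401}.

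The main computation is the second (nonlinear) equation. Inserting $\tilde I=kI/d_I$ and $\tilde S=kS$ into the nonlocal equation of Lemma \ref{lemma401}, the factor $k$ pulls out of the dispersal term, the incidence term becomes $\frac{\beta k SI}{d_IS+I}$, and the recovery term becomes $\gamma kI/d_I$. After dividing by $k$ and multiplying by $d_I$, I would use the linear relation $d_SS+I=1$, i.e. $S=(1-I)/d_S$, to eliminate $S$ from the denominator $d_IS+I$; this rewrites it as $\frac{1}{d_S}\bigl(d_SI+d_I(1-I)\bigr)$, so that the incidence term becomes $\frac{d_I\beta(1-I)I}{d_SI+d_I(1-I)}$. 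The final step is the algebraic identity
\begin{equation*}
\frac{d_I\beta(1-I)I}{d_SI+d_I(1-I)}-\gamma I
=(\beta-\gamma)I-\frac{d_S\beta I^2}{d_SI+d_I(1-I)},
\end{equation*}
which one checks by clearing the common denominator: the identity reduces to $d_I\beta(1-I)I=\beta I\bigl(d_SI+d_I(1-I)\bigr)-d_S\beta I^2$, and the right-hand side collapses to $\beta I\,d_I(1-I)$ after cancelling the $d_S\beta I^2$ terms. This is exactly the nonlinear equation in \eqref{401}.

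I expect no serious obstacle: the statement is a reversible change of variables, and the only point requiring care is bookkeeping of the factors $k$ and $d_I$ together with confirming that the two algebraically distinct-looking forms of the nonlinear term coincide, which is the displayed identity above. I would also remark in passing that the convention extending $\tfrac{SI}{S+I}$ by zero on the coordinate axes is compatible with the substitution, so that the equivalence persists up to the boundary of the positive quadrant.
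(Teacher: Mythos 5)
Your proof is correct: the substitution $\tilde S=kS$, $\tilde I=kI/d_I$ applied to the three conditions of Lemma \ref{lemma401}, together with the elimination $S=(1-I)/d_S$ and the displayed algebraic identity for the incidence term, is exactly the (omitted) argument the paper intends, since the paper simply declares this change of variables and states the lemma without proof. Your bookkeeping of the factors $k$ and $d_I$ and the verification of the identity by clearing the denominator $d_SI+d_I(1-I)$ are accurate, so nothing further is needed.
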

\begin{theorem}
Suppose $R_0>1$. Then, \eqref{301} has a nonnegative solution $(\tilde{S}(x), \tilde{I}(x))$ which satisfies $\tilde{S}(x), \tilde{I}(x)\in C(\bar{\Omega})$ and $\tilde{I}(x)\not\equiv0$ on $\bar{\Omega}$. Moreover, $(\tilde{S}(x), \tilde{I}(x))$ is a unique solution of \eqref{301}, $0<\tilde{S}(x)<\frac{k}{d_S}$ and $0<\tilde{I}(x)<\frac{k}{d_I}$ for some positive constant $k$ dependent on $d_S, d_I$.
\end{theorem}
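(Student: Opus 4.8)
The plan is to use Lemma \ref{lemma402} to collapse the whole system onto the single scalar nonlocal equation for $I$ appearing in the second line of \eqref{401}, namely
\[
\mathcal{L}[I](x):=d_I\int_{\Omega}J(x-y)(I(y)-I(x))dy+(\beta(x)-\gamma(x))I-\frac{d_S\beta(x)I^2}{d_I+(d_S-d_I)I}=0,
\]
with $S=(1-I)/d_S$ recovered from the first line and the constant $k$ read off afterwards from the third line of \eqref{401}. The biologically relevant range is $0<I<1$, where the denominator $d_I+(d_S-d_I)I$ stays positive. The structural fact I would isolate first is that the reaction term can be written as $I f(x,I)$ with $f(x,I)=(\beta(x)-\gamma(x))-\tfrac{d_S\beta(x)I}{d_I+(d_S-d_I)I}$, and that $f(x,\cdot)$ is \emph{strictly decreasing} on $[0,1]$ for each fixed $x$ (the derivative of the subtracted fraction is $d_S\beta(x)d_I/(d_I+(d_S-d_I)I)^2>0$). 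This sublinear/logistic structure is what will drive both existence and uniqueness.

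For existence I would run a sub/super-solution and monotone iteration argument. The constant $\overline{I}\equiv1$ is a supersolution since $\mathcal{L}[1]=-\gamma(x)\le0$, and in fact a \emph{strict} one. For a subsolution I take $\underline{I}=\varepsilon\phi$ with $\phi>0$ a positive principal eigenfunction of \eqref{201}: since $R_0>1$, Corollary \ref{remark209} gives $\lambda_p(d_I)<0$, so the linear part contributes $-\lambda_p(d_I)\varepsilon\phi>0$ while the nonlinear term is $O(\varepsilon^2)$, whence $\mathcal{L}[\varepsilon\phi]\ge0$ for $\varepsilon$ small and $\varepsilon\phi\le1$. Because $\lambda_p(d_I)$ need not be a genuine eigenvalue, I would build $\phi$ exactly as in the proof of Theorem \ref{theorem302}: approximate the coefficient so that Lemma \ref{lemma207} yields a continuous positive eigenfunction $\phi_n$ with a negative approximate eigenvalue (using Remark \ref{re208} to keep it negative). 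Adding a large constant $P$ so that $s\mapsto sf(x,s)+Ps$ is nondecreasing on $[0,1]$ turns the problem into a fixed point of the monotone map $\mathcal{T}[I]=\big(d_I\int_\Omega J(x-y)I(y)dy+If(x,I)+PI\big)/\big(d_I\int_\Omega J(x-y)dy+P\big)$, and iterating from $\overline{I}$ and $\underline{I}$ produces a solution squeezed between them; continuity of the limit follows from this representation (the kernel integral of an $L^\infty$ function is continuous, and the equation then determines $I(x)$ pointwise as a continuous function).

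I would then upgrade the bounds to $0<I<1$. Strict positivity rules out interior zeros by a nonlocal strong-maximum-principle step: if $I(x_0)=0$ with $I\ge0$, the equation forces $\int_\Omega J(x_0-y)I(y)dy=0$, and $J(0)>0$ with connectedness of $\Omega$ propagates this to $I\equiv0$, contradicting $I\not\equiv0$. Evaluating the equation at a maximum point $x_1$ of $I$ gives $I(x_1)f(x_1,I(x_1))\ge0$, hence $f(x_1,I(x_1))\ge0$; since $f(x_1,1)=-\gamma(x_1)<0$ and $f$ is decreasing, this forces $I(x_1)<1$. Translating back through Lemma \ref{lemma402} yields $(\tilde S,\tilde I)=(kS,kI/d_I)$ with $S=(1-I)/d_S\in(0,1/d_S)$, the stated bounds $0<\tilde S<k/d_S$ and $0<\tilde I<k/d_I$, and $k=d_IN/\int_\Omega(d_IS+I)dx>0$ determined by $I$.

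For uniqueness (in the spirit of \cite{Be2005}) the preceding maximum-point arguments already show every nonnegative nontrivial solution lies in $(0,1)$, so the ratio argument applies. Given two solutions $I_1,I_2$, set $\rho=\sup_{\bar\Omega}(I_1/I_2)$, attained at some $x_0$ by continuity on the compact $\bar\Omega$. Subtracting the equation for $I_1$ and $\rho$ times the equation for $I_2$ at $x_0$, and using $I_1(x_0)=\rho I_2(x_0)$, gives
\[
d_I\int_{\Omega}J(x_0-y)\big(I_1(y)-\rho I_2(y)\big)dy+\rho I_2(x_0)\big[f(x_0,I_1(x_0))-f(x_0,I_2(x_0))\big]=0,
\]
where the integral is $\le0$ because $I_1-\rho I_2\le0$. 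If $\rho>1$ then $I_1(x_0)>I_2(x_0)$, so strict monotonicity of $f$ makes the bracket negative and the whole left side strictly negative, a contradiction; hence $\rho\le1$, i.e. $I_1\le I_2$, and by symmetry $I_1\equiv I_2$. Uniqueness of $k$ then follows from the integral constraint. The main obstacle throughout is the absence of a regularizing effect: it forces the approximation scheme to produce the subsolution when $\lambda_p(d_I)$ is not an eigenvalue, it makes continuity of the constructed solution something to extract from the bare integral representation, and it deprives the uniqueness proof of the usual gradient/Laplacian information at the extremum of $I_1/I_2$, which I replace by attaining the supremum on $\bar\Omega$ together with the strict monotonicity of $f$.
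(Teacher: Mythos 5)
Your proposal is correct and follows essentially the same route as the paper: reduction to the scalar equation via Lemma \ref{lemma402}, a sub/supersolution pair built from an approximate principal eigenfunction (using $\lambda_p(d_I)<0$ from Corollary \ref{remark209}) together with $\overline{I}\equiv 1$, pointwise extraction of continuity of $I$ from the algebraic relation it satisfies, and the ratio/sliding uniqueness argument of \cite{Be2005}. The only differences are cosmetic: you work with $\rho=\sup_{\bar\Omega}(I_1/I_2)$ where the paper uses the reciprocal quantity $\tau^*$, you reach the contradiction directly from the strict monotonicity of $f(x,\cdot)$ where the paper first deduces $\omega\equiv 0$ a.e., and you get continuity from the monotone fixed-point representation rather than the implicit function theorem.
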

\begin{proof}
Since $R_0>1$, we can obtain that $\lambda_p(d_I)<0$ according to
Corollary \ref{remark209}. Without loss of generality, letting $m(x)=-d_I\int_{\Omega}J(x-y)dy+\beta(x)-\gamma(x)$, we can find a function sequence $\{m_n\}_{n=1}^{\infty}$ such that $\|m_n-m\|_{L^\infty(\Omega)}\to0$ as $n\to+\infty$ and the eigenvalue problem
\begin{equation*}
d_I\int_{\Omega}J(x-y)\varphi_n(y)dy+m_n(x)\varphi_n(x)
=-\lambda\varphi_n(x)~~~\text{in}~\Omega
\end{equation*}
admits a principal eigenpair $(\lambda_p^n(d_I),\varphi_n(x))$. Furthermore, taking $n$ large enough, provided $n\ge n_0$, we have
\begin{equation*}
\lambda_p^n(d_I)\le\frac12\lambda_p(d_I)-\|m_n-m\|_{L^\infty(\Omega)}.
\end{equation*}
Now, constructing $\underline{I}(x)=\delta\varphi_n(x)$ for some $\delta>0$ and the direct computation yields that
\begin{eqnarray*}
&& d_I\int_{\Omega}J(x-y)(\underline{I}(y)-\underline{I}(x))dx+
(\beta(x)-\gamma(x))\underline{I}
-\frac{d_S\beta(x)\underline{I}^2}{d_S\underline{I}+d_I(1-\underline{I})}\\
&=& -\delta\lambda_p^n(d_I)\varphi_n(x)+\delta\varphi_n(x)(m(x)-m_n(x))
-\frac{d_S\beta(x)\delta^2\varphi_n^2(x)}{d_S\delta\varphi_n(x)
+d_I(1-\delta\varphi_n(x))}\\
&\ge& -\frac12\lambda_p(d_I)\delta\varphi_n(x)
-\frac{d_S\beta(x)\delta^2\varphi_n^2(x)}
{d_S\delta\varphi_n(x)+d_I(1-\delta\varphi_n(x))}\\
&\ge& 0,
\end{eqnarray*}
provided $\delta$ small enough. Denote $\overline{I}(x)=1$. Then, it is easy to verify that
\begin{equation*}
d_I\int_{\Omega}J(x-y)(\overline{I}(y)-\overline{I}(x))dx+
(\beta(x)-\gamma(x))\overline{I}
-\frac{d_S\beta(x)\overline{I}^2}{d_S\overline{I}+d_I(1-\overline{I})}
=-\gamma(x)<0,
\end{equation*}
which implies that $\overline{I}$ is a super solution. We can take $\delta>0$ sufficiently small such that $\underline{I}\le\overline{I}$ on $\bar{\Omega}$. By the standard iteration method \cite{Coville2010,MR20091,ZLS2010}, there exists some $I(x)\in L^2(\Omega)$ satisfying \eqref{401} and $0<I(x)\le1$.

Define a functional as follows
\begin{equation*}
F(x,v)=d_I\int_{\Omega}J(x-y)I(y)dy+\left(\beta(x)-\gamma(x)
-d_I\int_{\Omega}J(x-y)dy\right)v
-\frac{d_S\beta(x)v^2}{d_Sv+d_I(1-v)}.
\end{equation*}
Obviously, $F(x,I)=0$. Note that
\begin{eqnarray*}
\frac{\partial F(x,v)}{\partial v}
&=&\beta(x)-\gamma(x)
-d_I\int_{\Omega}J(x-y)dy\\
&&-d_S\beta(x)\left\{\frac{d_Iv}
{[d_Sv+d_I(1-v)]^2}
+\frac{v}{d_Sv+d_I(1-v)}\right\}.
\end{eqnarray*}
Thus, we get
\begin{equation*}
\frac{\partial F(x,I)}{\partial v}
=-d_I\int_{\Omega}J(x-y)\frac{I(y)}{I(x)}dy
-\frac{d_Sd_I\beta(x)I(x)}{[d_SI(x)+d_I(1-I(x))]^2}<0.
\end{equation*}
And now the Implicit Function Theorem implies that $I(x)$ is continuous on $\bar{\Omega}$.

We claim that $I(x)\neq1$ for all $x\in\bar{\Omega}$. On the contrary, assume that there is $x_0\in Int(\Omega)$ such that $I(x_0)=1$. Thus, \eqref{401} yields that $\gamma(x_0)=d_I\int_{\Omega}J(x_0-y)(I(y)-I(x_0))dy\le0$, which is a contradiction. On the other hand, if $x_0\in\partial\Omega$, we can find a point sequence $\{x_n\}\subset\Omega$ such that $x_n\to x_0$ and $I(x_n)=1$, $I(x_n)\to I(x_0)$ as $n\to+\infty$. The same arguments can lead to a contradiction.

Now, we prove the uniqueness of positive solutions of \eqref{301}. For the super-sub solution pair $(I(x),1)$, we can obtain another solution of \eqref{301} by the basic iterative scheme, denoted by $I_1(x)$. Then, $I(x)\le I_1(x)\le 1$. Define
\begin{equation*}
\tau^*=\inf\{\tau>0|~I(x)\ge \tau I_1(x), ~x\in\bar{\Omega}\}.
\end{equation*}
By the boundedness of $I(x)$ and $I_1(x)$, $\tau^*$ is well defined. We claim that $\tau^*\ge1$. On the contrary, assume $\tau^*<1$. The direct calculation yields that
\begin{equation}\label{402}
\begin{aligned}
&d_I\int_{\Omega}J(x-y)(\tau^*I_1(y)-\tau^*I_1(x))dy
+(\beta(x)-\gamma(x))\tau^*I_1
-\frac{d_S\beta(x){\tau^*}^2I_1^2}{d_S\tau^*I_1+d_I(1-\tau^*I_1)}\\
=& \tau^*\beta(x)\left(\frac{d_SI_1}{d_SI_1+d_I(1-I_1)}
-\frac{d_S\tau^*I_1}
{d_S\tau^*I_1+d_I(1-\tau^*I_1)}\right)I_1>0.
\end{aligned}
\end{equation}
By the definition of $\tau^*$, there is some $x_0\in\Omega$ such that $I(x_0)=\tau^*I_1(x_0)$. Thus, we have
\begin{equation}\label{403}
\begin{aligned}
&d_I\int_{\Omega}J(x_0-y)\tau^*I_1(y)dy
-d_I\int_{\Omega}J(x_0-y)dy\tau^*
I_1(x_0)+(\beta(x_0)-\gamma(x_0))\tau^*I_1(x_0)\\
&-\beta(x_0)\frac{d_S{\tau^*}^2I_1^2(x_0)}
{d_S\tau^*I_1(x_0)+d_I(1-\tau^*I_1(x_0))}\\
=& d_I\int_{\Omega}J(x_0-y)(\tau^*I_1(y)-I(y))dy\le0.
\end{aligned}
\end{equation}
Let $\omega(y)=\tau^*I_1(y)-I(y)$ for $y\in\Omega$. Combining \eqref{402} and \eqref{403}, we have $d_I\int_{\Omega}J(x_0-y)\omega(y)dy=0$. Thus, this implies that $\omega(y)=0$ almost everywhere in $\Omega$. That is $I(x)=\tau^*I_1(x)$ almost everywhere in $\Omega$. Hence,
\begin{eqnarray*}
0&=& d_I\int_{\Omega}J(x-y)(I(y)-I(x))dy+\beta(x)
\left(1-\frac{d_SI(x)}
{d_SI(x)+d_I(1-I(x))}\right)I(x)-\gamma(x)I(x)\\
&=& \tau^*\Bigg[d_I\int_{\Omega}J(x-y)(I_1(y)-I_1(x))dy
+(\beta(x)-\gamma(x))I_1(x)
-\frac{d_S\beta(x)I_1^2(x)}
{d_SI_1(x)+d_I(1-I_1(x))}\Bigg]\\
&&+\tau^*\beta(x)\Bigg(\frac{d_SI_1(x)}
{d_SI_1(x)+d_I(1-I_1(x))}
-\frac{d_S\tau^*I_1(x)}
{d_S\tau^*I_1(x)+d_I(1-\tau^*I_1(x))}\Bigg)I_1(x)\\
&=& \tau^*\beta(x)\left(\frac{d_SI_1(x)}{d_SI_1(x)+d_I(1-I_1(x))}
-\frac{d_S\tau^*I_1(x)}
{d_S\tau^*I_1(x)+d_I(1-\tau^*I_1(x))}\right)I_1(x)>0,
\end{eqnarray*}
which is a contradiction. Thus, $\tau^*\ge1$ and this implies that $I(x)=I_1(x)$. The uniqueness of positive solutions of \eqref{301} is obtained.

Note that $S(x)=\frac{1-I(x)}{d_S}\in C(\bar{\Omega})$. Meanwhile, we get \eqref{301} admits a unique solution pair $(\tilde{S},\tilde{I})$ and $\tilde{S}(x)\in C(\bar{\Omega})$, $\tilde{I}(x)\in C(\bar{\Omega})$. Additionally, there are $0<\tilde{I}(x)<\frac{k}{d_I}$, $0<\tilde{S}(x)<\frac{k}{d_S}$ for some positive constant $k$ dependent on $d_S$ and $d_I$. The proof is complete.
\end{proof}

Below, we discuss the stability of endemic equilibrium in the sense of Definition \ref{def301}. First, we introduce a nonlocal dispersal problem as
\begin{equation}\label{404}
\begin{cases}
\frac{\partial u(x,t)}{\partial t}=d\int_{\Omega}J(x-y)(u(y,t)-u(x,t))dy+(r(x)-c(x)u)u,&~x\in\Omega,~t>0,\\
u(x,0)=u_0(x),&~x\in\Omega,
\end{cases}
\end{equation}
where $d>0$ is a positive constant and $u_0(x)$ is a bounded continuous function.
\begin{lemma}\label{lemma404}
Suppose (J) holds. Assume $r(x),c(x)\in C(\bar{\Omega})$ and $c(x)>0$ on $\bar{\Omega}$. Then the positive stationary solution $u_*$ of \eqref{404} is unique if and only if $\lambda_p(d)<0$, in which
\begin{equation*}
\lambda_p(d)=\inf_{\varphi\in L^2(\Omega),\varphi\neq0}
\frac{\frac d2\int_{\Omega}\int_{\Omega}J(x-y)(\varphi(y)-\varphi(x))^2dydx-
\int_{\Omega}r(x)\varphi^2(x)dx}{\int_{\Omega}\varphi^2(x)dx}.
\end{equation*}
Moreover, $u_*$ is globally asymptotically stable.
\end{lemma}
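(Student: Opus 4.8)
The plan is to treat \eqref{404} as a nonlocal logistic equation and to exploit the single structural feature that the per-capita growth rate $r(x)-c(x)u$ is \emph{strictly decreasing} in $u$, since $c>0$. I would split the argument into three blocks: necessity of $\lambda_p(d)<0$, existence together with uniqueness of the positive steady state when $\lambda_p(d)<0$, and finally global attractivity.

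For necessity, suppose a positive continuous steady state $u_*$ exists. Multiplying its stationary equation by $u_*$, integrating over $\Omega$, and using the symmetry $J(x)=J(-x)$ to rewrite the dispersal term as $-\tfrac12\int_\Omega\int_\Omega J(x-y)(u_*(y)-u_*(x))^2\,dy\,dx$, I obtain
\[
\frac d2\int_\Omega\int_\Omega J(x-y)(u_*(y)-u_*(x))^2\,dy\,dx-\int_\Omega r(x)u_*^2\,dx=-\int_\Omega c(x)u_*^3\,dx<0 .
\]
Dividing by $\int_\Omega u_*^2\,dx>0$ and using $u_*$ as a test function in the variational characterization of $\lambda_p(d)$ forces $\lambda_p(d)<0$. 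This yields the ``only if'' direction, equivalently the fact that no positive steady state can exist once $\lambda_p(d)\ge0$.

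For the converse, assume $\lambda_p(d)<0$. Since $\lambda_p(d)$ need not be attained, I would first regularize exactly as in the proofs of Proposition \ref{pro205} and Theorem \ref{theorem302}: approximate $r$ by $r_n$ with $\|r_n-r\|_{L^\infty(\Omega)}\to0$ so that the associated eigenvalue problem admits a genuine principal pair $(\lambda_p^n,\phi_n)$ with $\phi_n>0$ and $\lambda_p^n\to\lambda_p(d)<0$, and fix $n$ with $\lambda_p^n<0$. Then $\underline u=\delta\phi_n$ is a subsolution of the stationary problem for $\delta>0$ small (the quadratic loss $-c\underline u^2$ being of higher order, precisely as in the preceding endemic-equilibrium construction), while any constant $M$ with $r(x)<c(x)M$ on $\bar\Omega$ is a supersolution; shrinking $\delta$ gives $\underline u\le M$. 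The monotone iteration scheme of \cite{Coville2010,MR20091,ZLS2010} then produces a steady state $u_*\in[\delta\phi_n,M]$, and the Implicit Function Theorem argument already used for the endemic equilibrium upgrades $u_*$ to $C(\bar\Omega)$. Uniqueness follows the sliding/scaling argument of the previous theorem verbatim: for two positive steady states $u_1,u_2$ set $\tau^*=\max_{\bar\Omega}(u_1/u_2)$, which is finite and positive by continuity and positivity, and assume for contradiction that $\tau^*>1$. Subtracting the equation for $u_1$ from $\tau^*$ times that for $u_2$ at a contact point $x_0$ with $u_1(x_0)=\tau^*u_2(x_0)$ and $u_1\le\tau^*u_2$ elsewhere, the dispersal difference $d\int_\Omega J(x_0-y)(u_1(y)-\tau^*u_2(y))\,dy$ is $\le0$, whereas strict monotonicity of $r-cu$ makes the reaction difference equal to $\tau^*c(x_0)u_2^2(x_0)(\tau^*-1)>0$, a contradiction. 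Hence $u_1\le u_2$, and by symmetry $u_1=u_2$.

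Finally, for global asymptotic stability I would squeeze. Given any continuous $u_0>0$, compactness of $\bar\Omega$ yields $0<m_0\le u_0$, so for $\delta$ small and $M$ large one has $\delta\phi_n\le u_0\le M$ and also $\delta\phi_n\le u_*\le M$. Because the constant $M$ is a stationary supersolution, the solution $\bar u$ of \eqref{404} from $\bar u(\cdot,0)=M$ is nonincreasing in $t$ and remains $\ge u_*$; because $\delta\phi_n$ is a stationary subsolution, the solution $\underline u$ from $\underline u(\cdot,0)=\delta\phi_n$ is nondecreasing and remains $\le u_*$. By the comparison principle $\underline u(x,t)\le u(x,t)\le\bar u(x,t)$. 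Monotone (Dini-type) convergence lets me pass to the limit inside the nonlocal integral, so both $\bar u$ and $\underline u$ converge to positive steady states, which by the uniqueness just proved equal $u_*$; the squeeze gives $u(\cdot,t)\to u_*$. The main obstacle, flagged throughout the paper, is the complete absence of a regularizing or compactness effect: $\lambda_p(d)$ may fail to be an eigenvalue (which is what forces the regularization step), and every limit passage in both the existence and the attractivity arguments must be driven by monotonicity rather than by compact embeddings, so the delicate point is justifying that the monotone time-limits are genuinely stationary solutions of the nonlocal problem.
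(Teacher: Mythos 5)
The paper does not actually prove this lemma---it defers to \cite{SLY2014,SLW2012}---and your argument reproduces exactly the strategy used there and mirrored elsewhere in this paper (testing the stationary equation against $u_*$ for necessity, regularized principal eigenfunctions as subsolutions plus monotone iteration and the Implicit Function Theorem for existence and continuity, a scaling/contact-point argument for uniqueness, and monotone squeezing from ordered initial data for global attractivity). The proposal is correct; the only blemish is a harmless bookkeeping slip in the uniqueness step, where the dispersal difference is written for $u_1-\tau^*u_2$ while the reaction difference is written for $\tau^*u_2$-minus-$u_1$, but either consistent choice gives the stated contradiction.
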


We can see the proof of Lemma \ref{lemma404} in \cite{SLY2014,SLW2012}. Here, we omit it.

\begin{theorem}
Suppose $d_S=d_I=d$. The following alternatives hold.
\begin{description}
\item[(i)] If $R_0<1$, then all the positive solutions of \eqref{101} converge to the disease-free equilibrium $(\frac{N}{|\Omega|}, 0)$ as $t\to+\infty$.
\item[(ii)] If $R_0>1$, then all the positive solutions of \eqref{101} converge to $(\tilde{S}(x), \tilde{I}(x))$ as $t\to+\infty$.
\end{description}
\end{theorem}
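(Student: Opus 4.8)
The plan is to exploit the special structure when $d_S=d_I=d$: adding the two equations of \eqref{101} cancels the reaction terms, so the total density $U(x,t):=S_*(x,t)+I_*(x,t)$ solves the linear nonlocal diffusion equation
\begin{equation*}
\frac{\partial U}{\partial t}=d\int_\Omega J(x-y)(U(y,t)-U(x,t))\,dy,\qquad x\in\Omega,~t>0,
\end{equation*}
with $\int_\Omega U(x,t)\,dx\equiv N$. First I would show $U(x,t)\to N/|\Omega|$ uniformly on $\bar\Omega$. Setting $V=U-N/|\Omega|$ (so $\int_\Omega V=0$), the energy identity $\frac{d}{dt}\int_\Omega V^2=-d\int_\Omega\int_\Omega J(x-y)(V(y)-V(x))^2\,dy\,dx\le-2\alpha\int_\Omega V^2$, with $\alpha>0$ from \eqref{3003}, gives exponential $L^2$-decay of $V$; feeding this into the mild (Duhamel) representation of $V$ and using $a(x):=d\int_\Omega J(x-y)\,dy\ge a_0>0$ upgrades the decay to uniform convergence, exactly as in the treatment of $\hat S_1$ in the proof of Theorem \ref{theorem302}.

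Next I would eliminate $S_*=U-I_*$ and rewrite the $I_*$-equation as a single scalar nonlocal logistic equation with a time-dependent coefficient,
\begin{equation*}
\frac{\partial I_*}{\partial t}=d\int_\Omega J(x-y)(I_*(y,t)-I_*(x,t))\,dy+(\beta(x)-\gamma(x))I_*-\frac{\beta(x)I_*^2}{U(x,t)}.
\end{equation*}
Since $U\to N/|\Omega|$ uniformly, for every small $\varepsilon>0$ there is $T_\varepsilon$ with $N/|\Omega|-\varepsilon\le U(x,t)\le N/|\Omega|+\varepsilon$ for $t\ge T_\varepsilon$. On this time range the nonlinearity is squeezed, so that $I_*$ is simultaneously a sub-solution of the autonomous equation \eqref{404} with growth rate $r(x)=\beta(x)-\gamma(x)$ and self-limitation $c_+^\varepsilon(x)=\beta(x)/(N/|\Omega|+\varepsilon)$, and a super-solution of the same equation with $c_-^\varepsilon(x)=\beta(x)/(N/|\Omega|-\varepsilon)$. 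Both comparison equations carry the same principal quantity $\lambda_p(d)$ of Lemma \ref{lemma404} (it depends only on $r=\beta-\gamma$), which by Corollary \ref{remark209} has the opposite sign to $R_0-1$.

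Applying Lemma \ref{lemma404} to the two comparison equations then closes both cases by a comparison-principle squeeze. If $R_0<1$, then $\lambda_p(d)>0$, so neither comparison equation has a positive steady state and their solutions tend to $0$; hence $I_*(x,t)\to0$ uniformly and $S_*=U-I_*\to N/|\Omega|$, which is part (i). If $R_0>1$, then $\lambda_p(d)<0$, so for each $\varepsilon$ the equations with $c_\pm^\varepsilon$ have unique, globally asymptotically stable positive steady states $u_*^{\pm,\varepsilon}$, and the comparison principle yields $u_*^{-,\varepsilon}\le\liminf_{t\to\infty}I_*\le\limsup_{t\to\infty}I_*\le u_*^{+,\varepsilon}$. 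Letting $\varepsilon\to0$ and using $c_\pm^\varepsilon\to\beta|\Omega|/N$ uniformly, both bounds converge to the unique positive steady state $\tilde I$ of the limiting logistic equation, whence $I_*\to\tilde I$ and $S_*\to N/|\Omega|-\tilde I=\tilde S$, giving part (ii); note the identity $\tilde S+\tilde I=N/|\Omega|$ matches the normalization in Lemma \ref{lemma402}.

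The main obstacle I anticipate is the final $\varepsilon\to0$ passage: it requires continuous dependence of the unique positive steady state of \eqref{404} on the coefficient $c$, which is not automatic because the nonlocal problem lacks the compactness and regularizing effect that the corresponding elliptic problem enjoys. I would establish this by monotonicity of the steady state in $c$ together with the uniqueness in Lemma \ref{lemma404}, extracting a monotone $L^2$-limit as $\varepsilon\to0$ and identifying it, via the integral (fixed-point) form of the steady-state equation, with $\tilde I$. The uniform convergence $U\to N/|\Omega|$ and the validity of the comparison principle for the time-dependent equation are the remaining points demanding care, again owing to the absence of a regularizing effect.
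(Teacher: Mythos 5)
Your proposal is correct and follows essentially the same route as the paper: decompose via $v=S_*+I_*$ solving the linear nonlocal equation, show $v\to N/|\Omega|$ uniformly, squeeze $I_*$ between the two autonomous logistic comparison problems with carrying capacities $N/|\Omega|\pm\varepsilon$, apply Lemma \ref{lemma404}, and pass to the limit $\varepsilon\to0$ using monotonicity in $\varepsilon$ and uniqueness of the limiting steady state. The difficulty you flag at the end (continuous dependence of the steady state on the coefficient, handled by monotonicity rather than compactness) is exactly the point the paper addresses the same way.
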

\begin{proof}
Note that (i) is contained in Theorem \ref{theorem302}. Thus, we only need to prove (ii). Let $v(x,t)=S_*(x,t)+I_*(x,t)$. Then, it follows from \eqref{101} that
\begin{equation}\label{405}
\begin{cases}
\frac{\partial v(x,t)}{\partial t}=d\int_{\Omega}J(x-y)(v(y,t)-v(x,t))dy,&~x\in\Omega, t>0,\\
\int_{\Omega}v(x,t)dx=N, &~t>0,\\
v(x,0)\ge0,&~x\in\Omega.
\end{cases}
\end{equation}
Obviously, $\frac{N}{|\Omega|}$ is the constant stationary solution of \eqref{405}. Define
\begin{equation*}
\lambda_0=\inf_{\psi\in L^2(\Omega), \int_{\Omega}\psi(x)dx=0,\psi\not\equiv0}\frac{\frac d2\int_{\Omega}\int_{\Omega}J(x-y)(\psi(y)-\psi(x))^2dydx}
{\int_{\Omega}\psi^2(x)dx}.
\end{equation*}
By the same discussion as Theorem 3.6 in \cite{AMRT2010}, we get
\begin{equation*}
\left\|v(\cdot,t)-\frac{N}{|\Omega|}\right\|_{L^\infty(\Omega)}
\le\tilde{C}e^{-\lambda_0t}
\end{equation*}
for some positive constant $\tilde{C}$. Thus, $v(x,t)\to\frac{N}{|\Omega|}$ uniformly on $\bar{\Omega}$ as $t\to+\infty$ due to $v(\cdot,t)\in C(\bar{\Omega})$. Note that $I_*(x,t)$ satisfies
\begin{equation}\label{406}
\begin{cases}
\frac{\partial I_*}{\partial t}=d\int_{\Omega}J(x-y)(I_*(y,t)-I_*(x,t))dy+(\beta(x)-\gamma(x))I_*
-\frac{\beta(x)}{v}I_*^2,& x\in\Omega, t>0,\\
\int_{\Omega}I_*(x,0)dx>0.
\end{cases}
\end{equation}
Since $v(x,t)\to\frac{N}{|\Omega|}$ uniformly on $\bar{\Omega}$ as $t\to+\infty$, for any small $\varepsilon>0$, we can find a large $T>0$ such that
\begin{equation*}
\frac{N}{|\Omega|}-\varepsilon\le v(x,t)\le\frac{N}{|\Omega|}+\varepsilon~~\text{for all}~x\in\bar{\Omega}~\text{and}~t\ge T.
\end{equation*}
Inspired by the idea in \cite{Peng2009}, we consider the following two auxiliary problems:
\begin{equation}\label{407}
\begin{cases}
\frac{\partial \overline{I}}{\partial t}=d\int_{\Omega}J(x-y)(\overline{I}(y,t)-\overline{I}(x,t))dy
+(\beta(x)-\gamma(x))\overline{I}
-\frac{\beta(x)}{\frac{N}{|\Omega|}+\varepsilon}\overline{I}^2,& x\in\Omega, t>0,\\
\overline{I}(x,T)=I_*(x,T)>0, & x\in\Omega
\end{cases}
\end{equation}
and
\begin{equation}\label{408}
\begin{cases}
\frac{\partial \underline{I}}{\partial t}=d\int_{\Omega}J(x-y)(\underline{I}(y,t)-\underline{I}(x,t))dy
+(\beta(x)-\gamma(x))\underline{I}
-\frac{\beta(x)}{\frac{N}{|\Omega|}-\varepsilon}\underline{I}^2,& x\in\Omega, t>0,\\
\underline{I}(x,T)=I_*(x,T)>0, & x\in\Omega.
\end{cases}
\end{equation}
The comparison principle implies that $\overline{I}(x,t)$ and $\underline{I}(x,t)$ are respectively the upper and lower solutions of \eqref{406}. Thus, we get
\begin{equation*}
\underline{I}(x,t)\le I_*(x,t)\le \overline{I}(x,t)~~\text{for all}~x\in\bar{\Omega}~\text{and}~t\ge T.
\end{equation*}
Since $R_0>1$, we have $\lambda_p(d_I)<0$. According to Lemma \ref{lemma404}, there are two positive functions $\overline{I}_\varepsilon(x)$ and $\underline{I}_\varepsilon(x)\in C(\bar{\Omega})$ such that
\begin{equation*}
\overline{I}(x,t)\to\overline{I}_\varepsilon(x)~\text{and}
~\underline{I}(x,t)\to\underline{I}_\varepsilon(x)~\text{uniformly on}~\bar{\Omega}~\text{as}~t\to+\infty,
\end{equation*}
and $\overline{I}_\varepsilon(x)$, $\underline{I}_\varepsilon(x)$ are respectively the unique steady states of \eqref{407} and \eqref{408}. That is, $\overline{I}_\varepsilon(x)$ and $\underline{I}_\varepsilon(x)$ satisfy
\begin{equation*}
d\int_{\Omega}J(x-y)(\overline{I}_\varepsilon(y)-\overline{I}_\varepsilon(x))dy
+(\beta(x)-\gamma(x))\overline{I}_\varepsilon
-\frac{\beta(x)}{\frac{N}{|\Omega|}+\varepsilon}
\overline{I}_\varepsilon^2=0,~x\in\Omega
\end{equation*}
and
\begin{equation}\label{409}
d\int_{\Omega}J(x-y)(\underline{I}_\varepsilon(y)
-\underline{I}_\varepsilon(x))dy
+(\beta(x)-\gamma(x))\underline{I}_\varepsilon
-\frac{\beta(x)}{\frac{N}{|\Omega|}-\varepsilon}
\underline{I}_\varepsilon^2=0,~x\in\Omega,
\end{equation}
respectively. By the same arguments in \cite{SLW2012}, we know there exists some constant $M$ independent of $\varepsilon$ such that $\underline{I}_\varepsilon(x)\le M$ and $\overline{I}_\varepsilon(x)\le M$ for all $x\in\Omega$. Additionally, $\overline{I}_\varepsilon(x)$ and $\underline{I}_\varepsilon(x)$ are monotone with respect to $\varepsilon$. In fact, assume $\varepsilon_1<\varepsilon_2$, $\underline{I}_{\varepsilon_1}(x)$ and $\underline{I}_{\varepsilon_2}(x)$ are respectively the solutions of \eqref{409} as $\varepsilon=\varepsilon_1$ and $\varepsilon=\varepsilon_2$. The direct computation yields that
\begin{equation*}
\begin{aligned}
& d\int_{\Omega}J(x-y)(\underline{I}_{\varepsilon_1}(y)
-\underline{I}_{\varepsilon_1}(x))dy
+(\beta(x)-\gamma(x))\underline{I}_{\varepsilon_1}
-\frac{\beta(x)}{\frac{N}{|\Omega|}-\varepsilon_2}
\underline{I}_{\varepsilon_1}^{2}\\
=&\frac{\beta(x)}{\frac{N}{|\Omega|}-\varepsilon_1}
\underline{I}_{\varepsilon_1}^{2}
-\frac{\beta(x)}{\frac{N}{|\Omega|}-\varepsilon_2}
\underline{I}_{\varepsilon_1}^{2}<0.
\end{aligned}
\end{equation*}
By the uniqueness of positive solution of \eqref{409}, we get $\underline{I}_{\varepsilon_2}(x)<\underline{I}_{\varepsilon_1}(x)$ for $x\in\Omega$. Meanwhile, the same arguments lead us to obtain that $\overline{I}_\varepsilon(x)$ is strictly increasing on $\varepsilon$. Now, there exists a sequence $\{\varepsilon_n\}$ with $\varepsilon_n\to0$ as $n\to+\infty$ such that
\begin{equation*}
\underline{I}_{\varepsilon_n}(x)\to I_1(x)~~\text{as}~n\to+\infty~\text{uniformly on}~\bar{\Omega}
\end{equation*}
and
\begin{equation*}
\overline{I}_{\varepsilon_n}(x)\to I_2(x)~~\text{as}~n\to+\infty~\text{uniformly on}~\bar{\Omega}
\end{equation*}
for some positive continuous functions $I_1(x)$ and $I_2(x)$. Note that $I_1(x)$ and $I_2(x)$ satisfy the following equation
\begin{equation}\label{410}
d\int_{\Omega}J(x-y)(u(y)-u(x))dy+(\beta(x)-\gamma(x))u(x)
-\frac{\beta(x)}{\frac{N}{|\Omega|}}u^2(x)=0~~\text{in}~\Omega.
\end{equation}
Then, following Lemma \ref{lemma404}, we know $I_1(x)=I_2(x)$ due to the uniqueness of positive solutions of \eqref{410}. Thus, we get that $I_*(x,t)\to I_1(x)$ uniformly on $\bar{\Omega}$ as $t\to+\infty$ and $S_*(x,t)\to \frac{N}{|\Omega|}-I_1(x)$ as $t\to+\infty$. By the uniqueness of positive solutions of \eqref{301}, we have $\tilde{S}(x)=\frac{N}{|\Omega|}-I_1(x)$ and $\tilde{I}(x)=I_1(x)$. This completes the proof.
\end{proof}

\begin{theorem}
Assume $\beta(x)=r\gamma(x)$ on $\bar{\Omega}$ for some positive constant $r\in(0,+\infty)$.
\begin{description}
\item[(i)] If $r\le1$, then the disease-free equilibrium is globally asymptotically stable;
\item[(ii)] If $r>1$, then the endemic equilibrium is globally attractive.
\end{description}
\end{theorem}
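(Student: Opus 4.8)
The plan is to treat the two regimes separately, producing in each a Lyapunov functional whose time derivative is sign-definite. The decisive preliminary observation for $r>1$ is that the proportionality $\beta=r\gamma$ forces the endemic equilibrium to be \emph{spatially homogeneous}, and it is exactly this fact that rescues the classical logarithmic Lyapunov argument from the heterogeneity that normally obstructs it in the nonlocal setting.

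For part (i) I would take the simplest candidate $V(t)=\int_\Omega I(x,t)\,dx$. Since $J$ is even, the nonlocal diffusion term integrates to zero (Fubini plus $J(x-y)=J(y-x)$), and with $\beta=r\gamma$ the infected reaction is $R_I=\gamma I\,\frac{(r-1)S-I}{S+I}$, which is $\le 0$ whenever $r\le 1$; hence $\dot V=\int_\Omega R_I\,dx\le 0$, so $V$ is nonincreasing and converges to some $V_\infty\ge 0$ with $\int_0^\infty(-\dot V)\,dt<\infty$. Using $\gamma_{\min}:=\min_{\bar\Omega}\gamma>0$ together with the Cauchy--Schwarz inequality $\int_\Omega\frac{I^2}{S+I}\,dx\ge\frac{(\int_\Omega I)^2}{\int_\Omega(S+I)}=\frac{V^2}{N}$, one gets $-\dot V\ge \gamma_{\min}V^2/N$, so $V_\infty>0$ would contradict integrability; thus $\int_\Omega I(\cdot,t)\,dx\to 0$. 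Mass conservation then gives $\int_\Omega S\to N$, and the convergence $S_*\to N/|\Omega|$ follows from the decomposition $S_*=\hat S_1+\frac1{|\Omega|}\int_\Omega S_*$ and the energy estimate already used in Theorem \ref{theorem302} (the forcing now decays in $L^1$ rather than exponentially, but the spectral-gap argument still yields $\hat S_1\to 0$). I would also remark that for $r<1$ one has $\beta<\gamma$ pointwise, hence $R_0<1$ by Corollary \ref{newcor} and the conclusion is immediate from Theorem \ref{theorem302}; only the borderline $r=1$, where $R_0=1$, genuinely needs the Lyapunov argument.

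For part (ii) I would first verify that $(\tilde S,\tilde I)=\bigl(\tfrac{N}{r|\Omega|},\tfrac{(r-1)N}{r|\Omega|}\bigr)$ solves \eqref{301}: being constants the nonlocal integrals vanish, and since $\tilde S+\tilde I=r\tilde S$ the reaction $-\frac{\beta\tilde S\tilde I}{\tilde S+\tilde I}+\gamma\tilde I=\gamma\tilde I\bigl(1-\frac{r\tilde S}{\tilde S+\tilde I}\bigr)$ vanishes as well; by the uniqueness theorem this constant is the endemic equilibrium, and note $\tilde I=(r-1)\tilde S$. Then I would use $V(t)=\int_\Omega\bigl[(S-\tilde S-\tilde S\ln\frac{S}{\tilde S})+(I-\tilde I-\tilde I\ln\frac{I}{\tilde I})\bigr]\,dx\ge 0$. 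Because $\tilde S,\tilde I$ are constant, symmetrizing the nonlocal terms makes the diffusion contributions sign-definite, $D_S=-\frac{d_S\tilde S}{2}\int_\Omega\int_\Omega J(x-y)\frac{(S(y)-S(x))^2}{S(x)S(y)}\,dy\,dx\le 0$ and similarly $D_I\le 0$; writing $A:=\tilde I S-\tilde S I$, the identities $\frac{\tilde I}{I}-\frac{\tilde S}{S}=\frac{A}{IS}$ and $I-(r-1)S=-\frac{A}{\tilde S}$ collapse the reaction contribution $\int_\Omega R_S(\frac{\tilde I}{I}-\frac{\tilde S}{S})\,dx$ to $-\int_\Omega\frac{\gamma A^2}{\tilde S\,S(S+I)}\,dx\le 0$. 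Hence $\dot V\le 0$, with equality only when $S$ and $I$ are spatially constant (through the kernel-connectedness of $\Omega$) and $A=0$, which together with $\int_\Omega(S+I)=N$ pins down $(S,I)=(\tilde S,\tilde I)$.

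The hard part, and the step I expect to absorb most of the work, is converting $\dot V\le 0$ into actual convergence, precisely because \eqref{101} has no regularizing effect. I would first secure uniform-in-time positive lower and upper bounds on $S_*,I_*$ (an a priori bound together with a uniform-persistence estimate for $I$, available since $R_0>1$), so that $V$ is well defined and the dissipation identity yields $\int_0^\infty\!\int_\Omega\int_\Omega J(x-y)(S(y)-S(x))^2\,dy\,dx\,dt<\infty$ and $\int_0^\infty\!\int_\Omega A^2\,dx\,dt<\infty$. The remaining LaSalle-type argument must then be run in a weak topology: since the operators in \eqref{101} are bounded, $\partial_t S_*,\partial_t I_*$ are bounded, giving equicontinuity in $t$ and precompactness of the orbit in, say, $C([0,T];L^2(\Omega)_{\mathrm w})$; invariance of the $\omega$-limit set combined with the strict dissipation of $V$ off $(\tilde S,\tilde I)$ then forces every limit point to equal the equilibrium. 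Establishing this precompactness and the positive lower bound on $I_*$ without spatial smoothing is the main technical obstacle, and is where the boundedness of the nonlocal operators (in place of parabolic regularity) has to be exploited carefully.
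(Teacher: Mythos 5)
Your overall strategy is sound and the key computations check out, but you diverge from the paper in instructive ways, and part (i) has one repairable gap. For $r<1$ you and the paper agree: $\beta<\gamma$ pointwise gives $R_0<1$ by Corollary \ref{newcor} and Theorem \ref{theorem302} applies. For the borderline case $r=1$ the paper invokes ``the same method as in Peng--Liu'' to get $I_*\to0$ uniformly and then runs the energy/Duhamel argument for $S_*$; your differential inequality $-\dot V\ge \gamma_{\min}V^2/N$ for $V=\int_\Omega I\,dx$ is a cleaner, self-contained replacement that even yields an algebraic decay rate. However, it only delivers $\|I_*(\cdot,t)\|_{L^1}\to0$, whereas Definition \ref{def301} and the pointwise Duhamel step for $\hat S_1$ (where the forcing $f(x,s)$ must decay at each fixed $x$, not merely in $L^1$) require uniform decay of $I_*$. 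The repair is short: for $r\le1$ the reaction term in the $I$-equation is nonpositive, so $\partial_t I+d_Ia(x)I\le d_I\|J\|_{L^\infty}\|I_*(\cdot,t)\|_{L^1}$ with $a(x)=\int_\Omega J(x-y)\,dy\ge a_{\min}>0$ by hypothesis (J) and compactness of $\bar\Omega$; Gronwall then upgrades $L^1$ decay to uniform decay. You should insert this before appealing to the spectral-gap estimate.

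For part (ii) the decisive observation --- that $\beta=r\gamma$ forces the endemic equilibrium to be the spatial constant $\bigl(\tfrac{N}{r|\Omega|},\tfrac{(r-1)N}{r|\Omega|}\bigr)$ --- is exactly the paper's, but your Lyapunov functional is the logarithmic (Volterra-type) one, whereas the paper uses the weighted quadratic functional $\tfrac12\int_\Omega\bigl[(S-\tilde S)^2/\tilde S+(I-\tilde I)^2/\tilde I\bigr]dx$. Both dissipate for the same structural reason (constancy of $(\tilde S,\tilde I)$ makes the symmetrized nonlocal terms sign-definite, and $A=\tilde IS-\tilde SI$ factors the reaction contribution into a negative square); your identities $R_S=-\gamma IA/(\tilde S(S+I))$ and $\tilde I/I-\tilde S/S=A/(IS)$ are correct. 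The trade-off is that your dissipation carries $S(S+I)$ in the denominator and your functional degenerates as $S$ or $I$ approaches zero, so you genuinely need the uniform positivity and persistence bounds you flag, while the paper's quadratic choice needs only $S+I>0$ and avoids the persistence estimate for $I$ entirely. On the final step you are more careful than the paper, which dismisses it with ``the standard Lyapunov stability theorem and the continuity of the solutions''; your weak-$L^2$ precompactness argument via boundedness of $\partial_tS_*,\partial_tI_*$ is the right way to make the LaSalle step rigorous in the absence of a regularizing effect, and is worth writing out in full since it is precisely what the paper leaves implicit.
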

\begin{proof}
If $r<1$, then we can get $\lambda_p(d_I)>0$ by the definition of $\lambda_p(d_I)$. In this case, the result is obtained in Theorem \ref{theorem302}. Thus, we only need discuss the case $r=1$, that is $\beta(x)=\gamma(x)$. In this case, $\lambda_p(d_I)=0$, see \cite{MR2009na}. Consequently, system \eqref{101} is equivalent to
\begin{equation}\label{411}
\begin{cases}
\frac{\partial S(x,t)}{\partial t}=d_S\int_{\Omega}J(x-y)(S(y,t)-S(x,t))dy+\frac{\beta(x)I^2(x,t)}
{S(x,t)+I(x,t)} &~\text{in}~\Omega\times(0,+\infty),\\
\frac{\partial I(x,t)}{\partial t}=d_I\int_{\Omega}J(x-y)(I(y,t)-I(x,t))dy-\frac{\beta(x)I^2(x,t)}
{S(x,t)+I(x,t)} &~\text{in}~\Omega\times(0,+\infty),\\
\int_{\Omega}(S(x,t)+I(x,t))dx=N &~\text{in}~(0,+\infty),\\
S(x,0)=S_0(x)\ge0,~I(x,0)=I_0(x)\ge0 &~\text{in}~\Omega.
\end{cases}
\end{equation}
Note that the solution of \eqref{411} satisfies $\int_{\Omega}(S_*(x,t)+I_*(x,t))dx=N$ for all $t\ge0$. Thus, by the continuity of $S_*(x,t), I_*(x,t)$ with respect to $x$ and $t$ on $\bar{\Omega}\times(0,+\infty)$, we have
\begin{equation*}
\|S_*(\cdot,t)\|_{L^\infty(\Omega)}\le C_0~~\text{and}~~\|I_*(\cdot,t)\|_{L^\infty(\Omega)}\le C_0
\end{equation*}
for some positive constant $C_0$. Hence, by the same method in \cite{Peng2009}, we can obtain that
\begin{equation*}
I_*(x,t)\to0~~\text{uniformly on}~\bar{\Omega}~\text{as}~t\to+\infty.
\end{equation*}
Hence, $\int_{\Omega}S_*(x,t)dx\to N$ as $t\to+\infty$. Let
\begin{equation*}
S_*(x,t)=S_1(x,t)+\frac{1}{|\Omega|}\int_{\Omega}S_*(x,t)dx.
\end{equation*}
The direct computation yields that $S_1(x,t)$ satisfies
\begin{equation}\label{412}
\frac{\partial S_1}{\partial t}=d_S\int_{\Omega}J(x-y)(S_1(y,t)-S_1(x,t))dy
+f(x,t)
\end{equation}
for $x\in\Omega, t>0$ and $\int_{\Omega}S_1(x,t)dx=0$, in which
\begin{equation*}
f(x,t)=\frac{\beta(x)I_*^2(x,t)}
{S_*(x,t)+I_*(x,t)}-\frac{1}{|\Omega|}\int_{\Omega}\frac{\beta(x)I_*^2(x,t)}
{S_*(x,t)+I_*(x,t)}dx.
\end{equation*}
Obviously, we have $\lim\limits_{t\to+\infty}f(x,t)=0$. Note that
\begin{eqnarray*}
\int_{\Omega}S_1(x,t)f(x,t)dx &=& \int_{\Omega}\frac{\beta(x)S_1(x,t)I_*^2(x,t)}
{S_*(x,t)+I_*(x,t)}dx:=g(t).
\end{eqnarray*}
Hence, there is
\begin{equation*}
|g(t)|\le \int_{\Omega}\left|\frac{\beta(x)S_1(x,t)I_*^2(x,t)}
{S_*(x,t)+I_*(x,t)}\right|dx\le C\int_{\Omega}|S_1(x,t)|I_*(x,t)dx\le \tilde{C}\int_{\Omega}I_*(x,t)dx
\end{equation*}
for some positive constant $\tilde{C}$. Thus, we have $\lim\limits_{t\to+\infty}g(t)=0$. Let $\alpha$ be defined as \eqref{3003} and $h(x)=d_S\int_{\Omega}J(x-y)dy$. Then, following from \cite[Lemma 3.5]{AMRT2010}, there is $0<\alpha\le \min\limits_{\bar{\Omega}}h(x)$.
Define $U(t)=\int_{\Omega}S_1^2(x,t)dx$. By direct calculation, we get
\begin{eqnarray*}
\frac{dU(t)}{dt}&=& 2\int_{\Omega}S_1(x,t)\frac{\partial S_1(x,t) }{\partial t}dx\\
&=& 2\int_{\Omega}S_1(x,t)\left[d_S\int_{\Omega}J(x-y)
(S_1(y,t)-S_1(x,t))dy+f(x,t)\right]dx\\
&\le& -d_S\int_{\Omega}\int_{\Omega}J(x-y)(S_1(y,t)-S_1(x,t))^2dydx
+2g(t)\\
&\le& -2\alpha U(t)+2g(t).
\end{eqnarray*}
Thus, we have
\begin{equation*}
U(t)\le U(0)e^{-2\alpha t}+2e^{-2\alpha t}\int_{0}^{t}e^{2\alpha s}g(s)ds.
\end{equation*}
That is
\begin{equation*}
\|S_1(\cdot,t)\|_{L^2(\Omega)}\le \left(U(0)e^{-2\alpha t}+2e^{-2\alpha t}\int_{0}^{t}e^{2\alpha s}g(s)ds\right)^{\frac12}.
\end{equation*}
This implies that $\lim\limits_{t\to+\infty}\|S_1(\cdot,t)\|_{L^2(\Omega)}=0$. On the other hand, following \eqref{412}, there is
\begin{equation*}
S_1(x,t)=e^{-h(x)t}S_1(x,0)+e^{-h(x)t}\int_0^te^{h(x)s}\left[
d_S\int_{\Omega}J(x-y)S_1(y,s)dy+f(x,s)\right]ds.
\end{equation*}
Note that
\begin{equation*}
\lim_{t\to+\infty}e^{-h(x)t}\int_0^te^{h(x)s}f(x,s)ds
=\lim_{t\to+\infty}\frac{f(x,t)}{h(x)}=0
\end{equation*}
and
\begin{equation*}
e^{-h(x)t}\int_0^te^{h(x)s}
\int_{\Omega}J(x-y)|S_1(y,s)|dyds\le Ce^{-h(x)t}
\int_0^te^{h(x)s}\|S_1(\cdot,s)\|_{L^2(\Omega)}ds
\end{equation*}
for some positive constant $C$. Thus, we get $\lim\limits_{t\to+\infty}|S_1(x,t)|=0$ for all $x\in\bar{\Omega}$. This implies that $\lim\limits_{t\to+\infty}S_*(x,t)=\frac{N}{|\Omega|}$ uniformly on $\bar{\Omega}$ as $t\to+\infty$. On the other hand, noticed that system \eqref{411} is quasi-monotone increasing, then the same arguments in \cite{Peng2009} can get that $(\frac{N}{|\Omega|}, 0)$ is asymptotically stable.
This ends the proof of (i).

Below, define a functional as follows
\begin{equation*}
V(t):=V(S,I)(t)=\frac12\int_{\Omega}\left[\frac{(S-\tilde{S})^2}
{\tilde{S}}+\frac{(I-\tilde{I})^2}{\tilde{I}}\right]dx.
\end{equation*}
Obviously, $V(\tilde{S},\tilde{I})(t)=0$ and $V(S,I)(t)>0$ if $S(x,t)\neq\tilde{S}(x)$ and $I(x,t)\neq\tilde{I}(x)$.
Note $\int_{\Omega}(\tilde{S}(x)+\tilde{I}(x))dx=N$, thus by the uniqueness of stationary solutions of \eqref{301}, we can give it explicitly by
\begin{equation*}
(\tilde{S},\tilde{I})=\left(\frac1r\frac{N}{|\Omega|},
\frac{r-1}{r}\frac{N}{|\Omega|}\right).
\end{equation*}
Consequently, there is
\begin{equation*}
\frac1r=\frac{\tilde{S}}{\tilde{S}+\tilde{I}}.
\end{equation*}
Now, differentiating $V$ with respect to $t$ yields that
\begin{eqnarray*}
\frac{dV(t)}{dt}&=& \int_{\Omega}\left[\frac{S-\tilde{S}}{\tilde{S}}\frac{\partial S(x,t)}{\partial t}+\frac{I-\tilde{I}}{\tilde{I}}\frac{\partial I(x,t)}{\partial t}\right]dx\\
&=& \int_{\Omega}\frac{S-\tilde{S}}{\tilde{S}}\left[
d_S\int_{\Omega}J(x-y)(S(y,t)-S(x,t))dy\right]dx\\
&& +\int_{\Omega}\frac{I-\tilde{I}}{\tilde{I}}\left[
d_I\int_{\Omega}J(x-y)(I(y,t)-I(x,t))dy\right]dx\\
&& +\int_{\Omega}\frac{S-\tilde{S}}{\tilde{S}}
\left(-\frac{\beta SI}{S+I}+\gamma I\right)dx+\int_{\Omega}\frac{I-\tilde{I}}{\tilde{I}}
\left(\frac{\beta SI}{S+I}-\gamma I\right)dx\\
&=& -\frac{d_S}{2\tilde{S}}\int_{\Omega}\int_{\Omega}J(x-y)
(S(y,t)-S(x,t))^2dydx\\
&& -\frac{d_I}{2\tilde{I}}\int_{\Omega}\int_{\Omega}J(x-y)
(I(y,t)-I(x,t))^2dydx
-\int_{\Omega}\gamma I\frac{(\tilde{S}I-S\tilde{I})^2}{\tilde{S}^2\tilde{I}(S+I)}dx\\
&<& 0.
\end{eqnarray*}
Then, by the standard Lyapunov stability theorem and the continuity of the solutions of \eqref{101}, we obtain that
\begin{equation*}
(S(x,t),I(x,t))\to(\tilde{S}(x),\tilde{I}(x))~~\text{uniformly on}~\bar{\Omega}~\text{as}~t\to+\infty.
\end{equation*}
That is, $(\tilde{S}(x),\tilde{I}(x))$ is globally attractive. The proof of (ii) is complete.
\end{proof}

\begin{remark}{\rm
Note that when $d_S=d_I$, the epidemic disease will persist as $R_0>1$. Moreover, if the rate of the disease transmission is proportional to the rate of the disease recovery (i.e $\beta(x)=r\gamma(x)$ for some positive constant $r$), then the epidemic disease will be completely extinct finally as $R_0\le1$ (i.e $r\le1$), and will be persistence as $R_0>1$ (i.e $r>1$). Particularly, the epidemic disease will be always extinct as $R_0<1$. The case when $d_S\neq d_I$ and $R_0>1$ is very complicate. This is a challenging work and we will leave it for further study.
}
\end{remark}


\section{The effect of the large diffusion rates}
\noindent

In this section, we discuss the effect of the large diffusion rate on the transmission of the disease. Throughout this section, we always assume that $\int_{\Omega}\beta(x)dx>\int_{\Omega}\gamma(x)dx$.
Following Corollary \ref{newcor}, we know $R_0>1$ for all $d_I>0$ in this condition. Then, the positive solution $(\tilde{S},\tilde{I})$ of \eqref{301} exists.

\begin{theorem}\label{theorem501}
If we let $d_S, d_I\to+\infty$, then
\begin{equation*}
(\tilde{S},\tilde{I})\to\left(\frac{N}{|\Omega|}\frac{\int_{\Omega}\gamma(x)dx}
{\int_{\Omega}\beta(x)dx},\frac{N}{|\Omega|}
\left(1-\frac{\int_{\Omega}\gamma(x)dx}{\int_{\Omega}\beta(x)dx}
\right)\right).
\end{equation*}
\end{theorem}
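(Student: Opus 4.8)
The plan is to pass to the joint limit $d_S,d_I\to+\infty$ and show that the endemic equilibrium $(\tilde S,\tilde I)$ (unique, by the preceding theorem) homogenizes, so that both components become spatially constant, and then to pin down the two constants from two exact relations valid for every $d_S,d_I>0$. The first is the conservation law $\int_\Omega(\tilde S+\tilde I)\,dx=N$. The second comes from integrating the second equation of \eqref{301} over $\Omega$ and noticing that $\int_\Omega\int_\Omega J(x-y)(\tilde I(y)-\tilde I(x))\,dy\,dx=0$ by the evenness of $J$, which gives
\begin{equation*}
\int_\Omega\gamma(x)\tilde I\,dx=\int_\Omega\frac{\beta(x)\tilde S\tilde I}{\tilde S+\tilde I}\,dx .
\end{equation*}
Writing $p(x)=\tilde S/(\tilde S+\tilde I)\in(0,1)$, the claimed limit amounts to $p\to(\int_\Omega\gamma)/(\int_\Omega\beta)$ together with $\tilde S+\tilde I\to N/|\Omega|$.

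First I would prove the homogenization of $\tilde I$. Normalizing $\hat I=\tilde I/\|\tilde I\|_{L^\infty(\Omega)}\in(0,1]$ and dividing the infected equation by $\|\tilde I\|_{L^\infty(\Omega)}$ turns it into
\begin{equation*}
\int_\Omega J(x-y)(\hat I(y)-\hat I(x))\,dy=\frac{1}{d_I}\,\hat I(x)\big(\gamma(x)-\beta(x)p(x)\big),
\end{equation*}
whose right-hand side is $O(1/d_I)$ uniformly since $0\le\hat I,p\le1$. Exactly as in the proofs of Theorem \ref{theorem2203}(iii) and Lemma \ref{theorem212}(ii), I would invoke the convolution representation: $\int_\Omega J(x-y)\hat I(y)\,dy$ is uniformly bounded and equicontinuous (its modulus of continuity is controlled by that of $J$), so Arzel\`a--Ascoli extracts a uniformly convergent subsequence, and since $\int_\Omega J(x-y)\,dy\ge c_0>0$ on $\bar\Omega$, $\hat I$ itself converges uniformly to some $\hat I^*$ with $\int_\Omega J(x-y)(\hat I^*(y)-\hat I^*(x))\,dy=0$, hence constant by \cite[Proposition 3.3]{AMRT2010}; as $\|\hat I\|_{L^\infty}=1$ this constant is $1$. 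In particular $\|\tilde I\|_{L^\infty}$ is bounded (by $N/|\Omega|$ up to $o(1)$, since $\hat I\to1$ forces $\int_\Omega\tilde I\ge\|\tilde I\|_{L^\infty}|\Omega|(1+o(1))$), so the susceptible equation reads $\int_\Omega J(x-y)(\tilde S(y)-\tilde S(x))\,dy=\frac{1}{d_S}\tilde I(\beta p-\gamma)=O(1/d_S)$ uniformly, and the identical compactness argument shows $\tilde S$ is asymptotically spatially constant as well.

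Then I would identify the constants. Combining the two homogenizations, $\tilde S+\tilde I$ converges uniformly to a spatial constant, which by the conservation law must be $N/|\Omega|$; hence $p=\tilde S/(\tilde S+\tilde I)$ converges uniformly (along a subsequence) to a constant $p^*$. Integrating the normalized infected equation over $\Omega$ kills the nonlocal term and leaves $\int_\Omega\hat I(\beta p-\gamma)\,dx=0$; passing to the limit with $\hat I\to1$ and $p\to p^*$ yields $p^*\int_\Omega\beta=\int_\Omega\gamma$, that is $p^*=(\int_\Omega\gamma)/(\int_\Omega\beta)$. This also secures nondegeneracy: the standing hypothesis $\int_\Omega\beta>\int_\Omega\gamma$ of this section (under which $R_0>1$ by Corollary \ref{newcor}) forces $p^*<1$, so the infected limit is strictly positive. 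Therefore $\tilde S=p(\tilde S+\tilde I)\to\frac{N}{|\Omega|}\frac{\int_\Omega\gamma}{\int_\Omega\beta}$ and $\tilde I=(1-p)(\tilde S+\tilde I)\to\frac{N}{|\Omega|}\big(1-\frac{\int_\Omega\gamma}{\int_\Omega\beta}\big)$. Since this limit is independent of the subsequence chosen — in particular independent of the ratio $d_S/d_I$ along which one sends both parameters to infinity — the whole family converges, which is the assertion.

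The hard part will be the homogenization step: the lack of any regularizing effect blocks a direct compactness argument on $\tilde I$ or $\tilde S$, and the device that saves the proof is the convolution representation, which transfers compactness from the smoothing operator $u\mapsto\int_\Omega J(x-y)u(y)\,dy$ onto the solutions once the reaction terms are seen to be uniformly $O(1/d_I)$ and $O(1/d_S)$. A secondary technical point is that $d_S$ and $d_I$ may diverge at different rates, so the limits of $p$ and of $d_S/d_I$ must be taken along subsequences, the identification above guaranteeing a common limit.
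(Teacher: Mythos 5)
Your proposal is correct and follows essentially the same route as the paper: both rely on the convolution representation of the steady-state equations to upgrade compactness in the absence of a regularizing effect, pass to the limit to conclude both components homogenize to constants, and then identify those constants by integrating the infected equation (killing the nonlocal term by symmetry of $J$) together with the conservation law, with the standing assumption $\int_\Omega\beta>\int_\Omega\gamma$ ruling out the degenerate limits. The only difference is cosmetic ordering — you normalize $\tilde I$ first and recover the uniform bound as a byproduct, whereas the paper asserts the bound up front, extracts weak $L^2$ limits, and normalizes only when excluding the degenerate cases.
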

\begin{proof}
Arguing as above, we know if $(\tilde{S},\tilde{I})$ is the solution of \eqref{301}, then $\tilde{S},\tilde{I}\in C(\bar{\Omega})$. Since $\int_{\Omega}(\tilde{S}(x)+\tilde{I}(x))dx=N$, the continuity of $\tilde{S}, \tilde{I}$ gives that
\begin{equation*}
\|\tilde{S}(\cdot)\|_{L^\infty(\Omega)}\le\tilde{M}~~\text{and}~~
\|\tilde{I}(\cdot)\|_{L^\infty(\Omega)}\le\tilde{M},
\end{equation*}
where $\tilde{M}$ is a positive constant independent of $d_S$ and $d_I$.

Choosing sequences $\{d_{S,n}\}_{n=1}^{\infty}$ and $\{d_{I,n}\}_{n=1}^{\infty}$ with $d_{S,n}\to+\infty$ and $d_{I,n}\to+\infty$ as $n\to+\infty$. Meanwhile, the corresponding solution of \eqref{301} is $(\tilde{S}_n,\tilde{I}_n)$. Thus, there are subsequences still denoted by $\tilde{S}_n$ and $\tilde{I}_n$, and $\tilde{S}_*, \tilde{I}_*$ such that
\begin{equation*}
\tilde{S}_n(x)\to\tilde{S}_*(x)~~\text{and}~~
\tilde{I}_n(x)\to\tilde{I}_*(x)~~\text{weakly in}~L^2(\Omega).
\end{equation*}
Note that
\begin{equation*}
\left\|\frac{\beta(\cdot)\tilde{I}_n(\cdot)\tilde{S}_n(\cdot)}
{\tilde{I}_n(\cdot)+\tilde{S}_n(\cdot)}-\gamma(\cdot)
\tilde{I}_n(\cdot)\right\|_{L^\infty(\Omega)}\le C_*
\end{equation*}
for some positive constant $C_*$ dependent only on $\beta,\gamma$ and $\Omega$. Let
\begin{equation*}
g_n(x)=\frac{\beta(x)\tilde{I}_n(x)\tilde{S}_n(x)}
{\tilde{I}_n(x)+\tilde{S}_n(x)}-\gamma(x)\tilde{I}_n(x).
\end{equation*}
Then, $\tilde{S}_n(x)$ and $\tilde{I}_n(x)$ satisfy
\begin{equation}\label{501}
\tilde{S}_n(x)=\left[\int_{\Omega}J(x-y)dy\right]^{-1}
\left[\int_{\Omega}J(x-y)\tilde{S}_n(y)dy-\frac{g_n(x)}{d_{S,n}}\right]
\end{equation}
and
\begin{equation}\label{502}
\tilde{I}_n(x)=\left[\int_{\Omega}J(x-y)dy\right]^{-1}
\left[\int_{\Omega}J(x-y)\tilde{I}_n(y)dy+\frac{g_n(x)}
{d_{I,n}}\right],
\end{equation}
respectively. It is well-known that
\begin{equation*}
\int_{\Omega}J(x-y)\tilde{S}_n(y)dy\to
\int_{\Omega}J(x-y)\tilde{S}_*(y)dy,~
\int_{\Omega}J(x-y)\tilde{I}_n(y)dy\to
\int_{\Omega}J(x-y)\tilde{I}_*(y)dy
\end{equation*}
for all $x\in\Omega$ as $n\to+\infty$. Thus, following from \eqref{501} and \eqref{502}, we have
\begin{equation*}
\tilde{S}_n(x)\to\tilde{S}_*(x)~~\text{and}~~
\tilde{I}_n(x)\to\tilde{I}_*(x)~~\text{ in}~C(\bar{\Omega})~\text{as}~n\to+\infty.
\end{equation*}
On the other hand, $\tilde{S}_n(x)$ and $\tilde{I}_n(x)$ satisfy
\begin{equation*}
\int_{\Omega}J(x-y)(\tilde{S}_n(y)-\tilde{S}_n(x))dy
=\frac{g_n(x)}{d_{S,n}}
\end{equation*}
and
\begin{equation*}
\int_{\Omega}J(x-y)(\tilde{I}_n(y)-\tilde{I}_n(x))dy
=-\frac{g_n(x)}{d_{I,n}}.
\end{equation*}
Thus, $\tilde{S}_*(x)$ and $\tilde{I}_*(x)$ satisfy
\begin{equation*}
\int_{\Omega}J(x-y)(\tilde{S}_*(y)-\tilde{S}_*(x))dy=0~\text{and}~
\int_{\Omega}J(x-y)(\tilde{I}_*(y)-\tilde{I}_*(x))dy=0
\end{equation*}
for $x\in\Omega$, respectively. This implies that $\tilde{S}_*(x)$ and $\tilde{I}_*(x)$ are all constants, still denoted by $\tilde{S}_*$ and $\tilde{I}_*$ for the convenience.

Below, we need to show that $\tilde{S}_*$ and $\tilde{I}_*$ are all positive.

 CaseI: Assume $\tilde{I}_*=0, \tilde{S}_*>0$.
Let $\hat{I}_n(x)=\frac{\tilde{I}_n(x)}
{\|\tilde{I}_n(\cdot)\|_{L^\infty(\Omega)}}$. Thus, $\hat{I}_n(x)$ satisfies
\begin{equation}\label{503}
d_{I,n}\int_{\Omega}J(x-y)(\hat{I}_n(y)-\hat{I}_n(x))dy
+\frac{\beta(x)\tilde{S}_n(x)\hat{I}_n(x)}{\tilde{S}_n(x)+\tilde{I}_n(x)}
-\gamma(x)\hat{I}_n(x)=0~~\text{in}~\Omega.
\end{equation}
The same arguments as above yield that $\hat{I}_n(x)\to1$ as $n\to+\infty$ for all $x\in\Omega$. Integrating both sides of \eqref{503} over $\Omega$ and letting $n\to+\infty$, we have $\int_{\Omega}\beta(x)dx=\int_{\Omega}\gamma(x)dx$,
which is a contradiction.

CaseII: Assume $\tilde{I}_*>0, \tilde{S}_*=0$.
Integrating \eqref{503} on $\Omega$ and letting $n\to+\infty$, we have a contradiction with $-\int_{\Omega}\gamma(x)dx=0$.

CaseIII: Assume $\tilde{I}_*=0, \tilde{S}_*=0$. This is impossible because of $\int_{\Omega}(\tilde{S}_n(x)+\tilde{I}_n(x))dx=N$.

Thus, we get $\tilde{S}_*>0$ and $\tilde{I}_*>0$. Meanwhile, we know
\begin{equation*}
\int_{\Omega}\frac{\beta(x)\tilde{S}_*\tilde{I}_*}
{\tilde{S}_*+\tilde{I}_*}dx
=\int_{\Omega}\gamma(x)\tilde{I}_*dx
~~\text{and}~~\tilde{S}_*+\tilde{I}_*=\frac{N}{|\Omega|}.
\end{equation*}
Hence, the direct computation gives that
\begin{equation*}
\tilde{S}_*=\frac{N\int_{\Omega}\gamma(x)dx}
{|\Omega|\int_{\Omega}\beta(x)dx},~\tilde{I}_*=\frac{N}{|\Omega|}
\left(1-\frac{\int_{\Omega}\gamma(x)dx}{\int_{\Omega}\beta(x)dx}\right).
\end{equation*}
This completes the proof.
\end{proof}

\begin{theorem}
If $d_S\to+\infty$, then
\begin{equation*}
(\tilde{S}(x), \tilde{I}(x))\to\left(\frac{d_IN}{\int_{\Omega}(d_I+\theta_*(x))dx},
\frac{N\theta_*(x)}{\int_{\Omega}(d_I+\theta_*(x))dx}\right),
\end{equation*}
where $\theta_*(x)$ is the unique positive solution of the following problem
\begin{equation}\label{504}
d_I\int_{\Omega}J(x-y)(u(y)-u(x))dy+(\beta(x)-\gamma(x))u
-\frac{\beta(x)u^2}{d_I+u}=0~~\text{in}~\Omega.
\end{equation}
\end{theorem}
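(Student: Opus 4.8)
The plan is to mimic the compactness argument of Theorem~\ref{theorem501}, but now only $\tilde S$ is driven to a constant while $\tilde I$ keeps the nonlocal profile governed by the fixed $d_I$. First I would pass to a sequence $d_{S,n}\to+\infty$ with corresponding solutions $(\tilde S_n,\tilde I_n)$ of \eqref{301}. As in Theorem~\ref{theorem501}, the mass identity $\int_\Omega(\tilde S_n+\tilde I_n)\,dx=N$ together with the pointwise relations from \eqref{301} yields an $L^\infty$ bound on $(\tilde S_n,\tilde I_n)$ independent of $d_{S,n}$: from $\tilde S_n<k/d_{S,n}$ and $k|\Omega|=\int_\Omega(d_{S,n}\tilde S_n+d_I\tilde I_n)\,dx\le(d_{S,n}+d_I)N$ one gets $\tilde S_n\le 2N/|\Omega|$ for large $n$, and evaluating the second equation of \eqref{301} at a maximum point of $\tilde I_n$ then bounds $\tilde I_n$. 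Dividing the first equation of \eqref{301} by $d_{S,n}$ shows $\int_\Omega J(x-y)(\tilde S_n(y)-\tilde S_n(x))\,dy\to0$ uniformly, so any subsequential limit $\tilde S_*$ of $\tilde S_n$ is a constant by Proposition~3.3 in \cite{AMRT2010}; the representation \eqref{501} upgrades this to $\tilde S_n\to\tilde S_*$ in $C(\bar{\Omega})$. Passing to a further subsequence, I also assume $\tilde I_n\rightharpoonup\tilde I_*$ weakly in $L^2(\Omega)$.

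The central step is to upgrade the weak convergence of $\tilde I_n$ to uniform convergence in the absence of any regularizing effect. Writing the $\tilde I_n$-equation as the pointwise identity
\[
\Big(\gamma(x)+d_I\int_\Omega J(x-y)\,dy\Big)\tilde I_n(x)=d_I\,w_n(x)+\frac{\beta(x)\tilde S_n(x)\tilde I_n(x)}{\tilde S_n(x)+\tilde I_n(x)},\qquad w_n(x):=\int_\Omega J(x-y)\tilde I_n(y)\,dy,
\]
I observe that for each fixed $x$ this is a quadratic equation in $\tilde I_n(x)$ with positive leading coefficient and nonpositive constant term, hence has a unique nonnegative root $\tilde I_n(x)=\Phi\big(w_n(x),\tilde S_n(x)\big)$ with $\Phi$ continuous. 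Since $\{J(x-\cdot)\}_{x\in\bar{\Omega}}$ is equicontinuous and uniformly bounded in $L^2(\Omega)$, the weak convergence of $\tilde I_n$ forces $w_n\to w_*:=\int_\Omega J(\cdot-y)\tilde I_*(y)\,dy$ uniformly on $\bar{\Omega}$ by Arzel\`a--Ascoli. Combined with $\tilde S_n\to\tilde S_*$ uniformly, this gives $\tilde I_n=\Phi(w_n,\tilde S_n)\to\tilde I_*$ in $C(\bar{\Omega})$, and letting $n\to+\infty$ shows that $(\tilde S_*,\tilde I_*)$ solves the limiting system
\[
d_I\int_\Omega J(x-y)(\tilde I_*(y)-\tilde I_*(x))\,dy+\frac{\beta(x)\tilde S_*\tilde I_*}{\tilde S_*+\tilde I_*}-\gamma(x)\tilde I_*=0,\qquad \int_\Omega(\tilde S_*+\tilde I_*)\,dx=N.
\]

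Finally I would establish positivity and identify the limit. The case $\tilde S_*=0$ is excluded by integrating the limit equation over $\Omega$ (the nonlocal term vanishes by the symmetry of $J$), which would force $\int_\Omega\gamma\tilde I_*\,dx=0$, hence $\tilde I_*\equiv0$, contradicting $\int_\Omega(\tilde S_*+\tilde I_*)\,dx=N$. To rule out $\tilde I_*\equiv0$ (which would give $\tilde S_*=N/|\Omega|$), I would normalize $\hat I_n=\tilde I_n/\|\tilde I_n\|_{L^\infty(\Omega)}$ and argue as above that $\hat I_n\to\hat I_*$ uniformly with $\hat I_*>0$ and $\|\hat I_*\|_{L^\infty(\Omega)}=1$; since $\tilde I_n\to0$ and $\tilde S_n\to N/|\Omega|$, the quotient $\beta\tilde S_n/(\tilde S_n+\tilde I_n)\to\beta$, so $\hat I_*$ solves the homogeneous problem $d_I\int_\Omega J(x-y)(\hat I_*(y)-\hat I_*(x))\,dy+(\beta-\gamma)\hat I_*=0$, i.e. $\hat I_*$ is a positive eigenfunction of \eqref{201} at eigenvalue $0$. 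This forces $\lambda_p(d_I)=0$ and contradicts $\lambda_p(d_I)<0$, which holds here because $\int_\Omega\beta\,dx>\int_\Omega\gamma\,dx$ (Theorem~\ref{theorem2203}(iv)). This positivity step is the main obstacle: unlike Theorem~\ref{theorem501}, where $d_I\to+\infty$ collapses the normalized profile to a constant and immediately yields $\int_\Omega\beta=\int_\Omega\gamma$, here $d_I$ is fixed and one must invoke that a positive solution of the nonlocal eigenproblem can only occur at the principal eigenvalue. With positivity secured, the substitution $\theta_*=d_I\tilde I_*/\tilde S_*$ turns the limit equation exactly into \eqref{504}; by uniqueness of the positive solution of \eqref{504}, $\theta_*$ and hence $(\tilde S_*,\tilde I_*)$ are uniquely determined, so the whole family converges as $d_S\to+\infty$. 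The mass constraint then reads $\tilde S_*\int_\Omega(d_I+\theta_*)\,dx=d_IN$, which yields $\tilde S_*=d_IN/\int_\Omega(d_I+\theta_*)\,dx$ and $\tilde I_*=\tilde S_*\theta_*/d_I=N\theta_*/\int_\Omega(d_I+\theta_*)\,dx$, as claimed.
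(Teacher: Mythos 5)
Your proof is correct, but it takes a genuinely different route from the paper's. The paper never works with $(\tilde S,\tilde I)$ directly: using the rescaling of Lemma \ref{lemma402} it sets $\theta=d_SI$, rewrites the problem as \eqref{505}, shows that $\theta$ is monotone increasing in $d_S$ (by comparing the equations for $d_{S_1}<d_{S_2}$) and uniformly bounded by $d_I\max_{\bar\Omega}\left(\frac{\beta-\gamma}{\gamma}\right)$ via evaluation at a maximum point, so that the convergence $\theta_n\to\theta_*$ comes from monotonicity rather than from a compactness extraction; the degenerate limit $\theta_*\equiv0$ is excluded exactly as in your last step (normalize, pass to the limit, conclude $\lambda_p(d_I)=0$, contradiction), and the asserted limits of $(\tilde S,\tilde I)$ are then read off from the algebraic identities in \eqref{401}. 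You instead stay in the original system \eqref{301}: you derive uniform $L^\infty$ bounds, force $\tilde S_n$ to a constant by dividing the first equation by $d_{S,n}$, and --- your main new ingredient --- upgrade weak $L^2$ convergence of $\tilde I_n$ to convergence in $C(\bar\Omega)$ by solving the pointwise quadratic for $\tilde I_n(x)$ in terms of $w_n$ and $\tilde S_n$ (the same device the paper applies to $S_n$ in its $d_I\to+\infty$ theorem, transplanted here); you then identify the limiting system and only at the end substitute $\theta_*=d_I\tilde I_*/\tilde S_*$, which does reproduce \eqref{504} because $(\beta-\gamma)u-\frac{\beta u^2}{d_I+u}=\frac{\beta d_Iu}{d_I+u}-\gamma u$. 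The paper's monotonicity buys convergence of the whole family for free; your route instead needs the uniqueness of the positive solution of \eqref{504} to pass from subsequential limits to the full limit, which is available. Two points you should make explicit: strict positivity of $\tilde I_*$ on $\Omega$ (needed before invoking uniqueness for \eqref{504}) follows from the limit equation, since $\tilde I_*(x_1)=0$ forces $\int_\Omega J(x_1-y)\tilde I_*(y)dy=0$ and hence $\tilde I_*\equiv0$ near $x_1$, whence $\tilde I_*\equiv0$ on all of $\Omega$; and the step from a nonnegative nontrivial eigenfunction with eigenvalue $0$ to the conclusion $\lambda_p(d_I)=0$ rests on the fact that a positive continuous eigenfunction can only belong to the principal eigenvalue, which is the same appeal to Lemma \ref{lemma207} that the paper itself makes at the corresponding point.
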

\begin{proof}
Inspired by the method in \cite{Peng20091}, let $\theta(x)=d_SI(x)$. Then, according to \eqref{401}, we have
\begin{equation}\label{505}
d_I\int_{\Omega}J(x-y)(\theta(y)-\theta(x))dy+(\beta(x)-\gamma(x))\theta
-\frac{\beta(x)\theta^2}{\theta+d_I( 1-d_{S}^{-1}\theta)}=0~~\text{in}~\Omega.
\end{equation}
Note that the positive solution $\theta(x)$ of \eqref{505} is monotone increasing on $d_S$. Indeed, for any $d_{S_1}<d_{S_2}$, letting $\theta_1(x)$ and $\theta_2(x)$  be solutions of \eqref{505} corresponding to $d_S=d_{S_1}$ and $d_S=d_{S_2}$ respectively, then there is
\begin{equation*}
\begin{aligned}
& d_I\int_{\Omega}J(x-y)(\theta_1(y)-\theta_1(x))dy
+(\beta(x)-\gamma(x))\theta_1
-\frac{\beta(x)\theta_1^2}{\theta_1+d_I( 1-d_{S_2}^{-1}\theta_1)}\\
=&\frac{\beta(x)\theta_1^2}{\theta_1+d_I( 1-d_{S_1}^{-1}\theta_1)}
-\frac{\beta(x)\theta_1^2}{\theta_1+d_I( 1-d_{S_2}^{-1}\theta_1)}>0.\\
\end{aligned}
\end{equation*}
Thus, $\theta_1(x)<\theta_2(x)$ for all $x\in\Omega$. Since $\theta(x)\in C(\bar{\Omega})$, there exists some $x_0\in\bar{\Omega}$ such that $\theta(x_0)=\max\limits_{\bar{\Omega}}\theta(x)$. Then, it follows from \eqref{505} that
\begin{equation*}
(\beta(x_0)-\gamma(x_0))\theta(x_0)
-\frac{\beta(x_0)\theta^2(x_0)}{\theta(x_0)+d_I( 1-d_{S}^{-1}\theta(x_0))}\ge 0.
\end{equation*}
That is
\begin{eqnarray*}
\theta(x_0)&\le& \frac{d_I(\beta(x_0)-\gamma(x_0))}
{\gamma(x_0)}(1-d_{S}^{-1}\theta(x_0))\\
&\le& \frac{d_I(\beta(x_0)-\gamma(x_0))}
{\gamma(x_0)}.
\end{eqnarray*}
Thus, we have
\begin{equation*}
\theta(x)\le d_I\max_{\bar{\Omega}}
\left(\frac{\beta(x)-\gamma(x)}{\gamma(x)}\right).
\end{equation*}
Since $\theta(x)$ is monotone increasing and uniformly bounded on $d_S$, there exists some sequences $\{d_{S,n}\}_{n=1}^{\infty}$ satisfying $d_{S,n}\to+\infty$ as $n\to+\infty$ such that $\theta_n(x)=d_{S,n}I_n(x)\to \theta_*(x)$ in $C(\bar{\Omega})$ for some nonnegative function $\theta_*(x)$ as $n\to+\infty$, where $\theta_n(x)$ is the solution of \eqref{505} with $d_S=d_{S,n}$. Thus, $\theta_*(x)$ is the unique positive solution of \eqref{504}. We claim that $\theta_*(x)\neq0$. On the contrary, assume that $\theta_*(x)=0$. Let
\begin{equation*}
\hat{\theta}_n(x)=\frac{\theta_n(x)}{\|\theta_n\|_{L^\infty(\Omega)}}.
\end{equation*}
Then, $\hat{\theta}_n(x)$ satisfies
\begin{equation*}
d_I\int_{\Omega}J(x-y)(\hat{\theta}_n(y)-\hat{\theta}_n(x))dy
+(\beta(x)-\gamma(x))\hat{\theta}_n
-\frac{\beta(x)\hat{\theta}_n\theta_n}{\theta_n+d_I( 1-d_{S}^{-1}\theta_n)}=0~~\text{in}~\Omega.
\end{equation*}
Note that there is some $\hat{\theta}(x)>0$ such that $\hat{\theta}_n(x)\to\hat{\theta}(x)$ as $n\to+\infty$ and $\hat{\theta}$ satisfies
\begin{equation*}
d_I\int_{\Omega}J(x-y)(\hat{\theta}(y)-\hat{\theta}(x))dy
+(\beta(x)-\gamma(x))\hat{\theta}(x)=0~~~\text{in}~\Omega.
\end{equation*}
It follows from Lemma \ref{lemma207} that $\lambda_p(d_I)=0$. This is a contradiction according to the discussion in Sections $3$ and $4$.

On the other hand, we know $\theta_n(x)=d_{S,n}I_n(x)$. Thus, there holds $I_n(x)=\frac{\theta_n(x)}{d_{S,n}}\to0$ as $n\to+\infty$. Due to $d_{S,n}S_n(x)=1-I_n(x)$, we have $d_{S,n}S_n(x)\to1$ as $n\to+\infty$. Hence, applying \eqref{401} yields that
\begin{equation*}
\tilde{S}_n(x)=kS_n(x)=\frac{d_INS_n(x)}
{\int_{\Omega}(d_IS_n(x)+I_n(x))dx}
=\frac{d_INd_{S,n}S_n(x)}{\int_{\Omega}(d_Id_{S,n}S_n(x)+d_{S,n}I_n(x))dx}
\end{equation*}
and
\begin{equation*}
\tilde{I}_n(x)=\frac{k}{d_I}I_n(x)=\frac{NI_n(x)}
{\int_{\Omega}(d_IS_n(x)+I_n(x))dx}
=\frac{Nd_{S,n}(x)I_n(x)}{\int_{\Omega}
(d_Id_{S,n}S_n(x)+d_{S,n}I_n(x))}.
\end{equation*}
Consequently, we obtain that
\begin{equation*}
\tilde{S}_n(x)\to\frac{d_IN}{\int_{\Omega}(d_I+\theta_*(x))dx}~~
\text{as}~n\to+\infty
\end{equation*}
and
\begin{equation*}
\tilde{I}_n(x)\to\frac{N\theta^*(x)}
{\int_{\Omega}(d_I+\theta^*(x))dx}~~
\text{as}~n\to+\infty.
\end{equation*}
This ends the proof.
\end{proof}

\begin{theorem}
 If $d_I\to+\infty$, then
\begin{equation*}
(\tilde{S}(x), \tilde{I}(x))\to(S^*(x), I^*)
~~~\text{in}~C(\bar{\Omega}),
\end{equation*}
where $S^*(x)$ is a positive function and $I^*$ is a positive constant. Moreover, $(S^*(x), I^*)$ satisfies
\begin{equation}\label{final}
\begin{cases}
d_S\int_{\Omega}J(x-y)(S^*(y)-S^*(x))dy+\gamma(x)I^*-
\frac{\beta(x)S^*(x)I^*}{S^*(x)+I^*}=0,&~~x\in\Omega,\\
\int_{\Omega}(S^*(x)+I^*)dx=N.
\end{cases}
\end{equation}
\end{theorem}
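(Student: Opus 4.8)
The plan is to adapt the compactness-and-limit scheme already employed for Theorem \ref{theorem501} and for the preceding $d_S\to+\infty$ result, working directly with the endemic pair $(\tilde S,\tilde I)$ and exploiting the structural asymmetry of \eqref{301}: the large parameter $d_I$ appears only in the $\tilde I$-equation, while the $\tilde S$-equation is $d_I$-free. First I would establish $d_I$-uniform $L^\infty$ bounds. Rewriting the stationary $\tilde I$-equation of \eqref{301} in resolvent form,
\[
\Big[d_I\!\int_\Omega J(x-y)\,dy+\gamma(x)\Big]\tilde I(x)=d_I\!\int_\Omega J(x-y)\tilde I(y)\,dy+\frac{\beta(x)\tilde S(x)\tilde I(x)}{\tilde S(x)+\tilde I(x)},
\]
and using $\frac{\beta\tilde S\tilde I}{\tilde S+\tilde I}\le\|\beta\|_{L^\infty}\tilde I$ together with $\int_\Omega\tilde I\,dx\le N$, the term $d_I\int_\Omega J(x-y)\,dy$ dominates for $d_I$ large and yields $\|\tilde I\|_{L^\infty}\le C$ uniformly. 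Substituting this into the analogous representation
\[
\tilde S(x)=\frac{1}{a(x)}\left[\int_\Omega J(x-y)\tilde S(y)\,dy+\frac{1}{d_S}\Big(\gamma(x)\tilde I(x)-\frac{\beta(x)\tilde S(x)\tilde I(x)}{\tilde S(x)+\tilde I(x)}\Big)\right],\qquad a(x):=\int_\Omega J(x-y)\,dy,
\]
and using $\int_\Omega\tilde S\,dx\le N$ gives $\|\tilde S\|_{L^\infty}\le C$ with $C$ independent of $d_I$.

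Next I would fix a sequence $d_{I,n}\to+\infty$ with endemic equilibria $(\tilde S_n,\tilde I_n)$ and pass to weak $L^2$ limits $(\tilde S_*,I^*)$ (after extracting a subsequence). The $\tilde I_n$-equation forces $\int_\Omega J(x-y)(\tilde I_n(y)-\tilde I_n(x))\,dy=O(1/d_{I,n})\to0$ uniformly, so by Proposition 3.3 in \cite{AMRT2010} the limit $I^*$ must be a constant; the representation formula above (denominator $\to a(x)$, numerator $\int_\Omega J(x-y)\tilde I_n(y)\,dy\to I^*a(x)$) then upgrades this to $\tilde I_n\to I^*$ uniformly on $\bar\Omega$, exactly as in the proof of Lemma \ref{theorem212}. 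Because the $\tilde S$-equation contains no large parameter, $\tilde S_n$ does \emph{not} homogenize; instead, inserting the uniform convergence of $\tilde I_n$ into the $\tilde S_n$-representation and using $\int_\Omega J(x-y)\tilde S_n(y)\,dy\to\int_\Omega J(x-y)\tilde S_*(y)\,dy$ shows the right-hand side converges uniformly, whence $\tilde S_n\to S^*$ in $C(\bar\Omega)$ for some $S^*$. Letting $n\to+\infty$ in the $\tilde S_n$-equation and in $\int_\Omega(\tilde S_n+\tilde I_n)\,dx=N$ shows $(S^*,I^*)$ solves \eqref{final}.

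It then remains to prove positivity, and here lies the crux. To rule out $I^*=0$ I would argue as in Case I of Theorem \ref{theorem501}: set $\hat I_n=\tilde I_n/\|\tilde I_n\|_{L^\infty}$, which satisfies the same equation divided by $\|\tilde I_n\|_{L^\infty}$; the homogenization identity again gives $\hat I_n\to1$ uniformly, while $\tilde I_n\to0$ makes $\frac{\tilde S_n}{\tilde S_n+\tilde I_n}\to1$ wherever $\tilde S_*>0$. Integrating the $\hat I_n$-equation over $\Omega$ (the dispersal term integrates to zero by symmetry of $J$) and letting $n\to+\infty$ would yield $\int_\Omega\beta(x)\,dx=\int_\Omega\gamma(x)\,dx$, contradicting the standing hypothesis $\int_\Omega\beta>\int_\Omega\gamma$ of this section (equivalently $R_0>1$ for all $d_I$, by Corollary \ref{newcor}). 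Hence $I^*>0$, and $S^*>0$ follows from the limiting equation in \eqref{final}. Finally, identifying the limit with the unique solution of \eqref{final} promotes the subsequential convergence to convergence of the whole family as $d_I\to+\infty$.

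I expect the main obstacle to be the recurring lack of compactness of the nonlocal operator: the weak $L^2$ limits must be bootstrapped to uniform convergence purely through the resolvent representation formulas, with the nonlinear incidence term $\frac{\beta\tilde S\tilde I}{\tilde S+\tilde I}$ handled carefully so that its limit can be taken. The delicate companion point is the positivity argument, i.e. showing the disease does not fade as $d_I\to+\infty$; this is where the sign condition $\int_\Omega\beta>\int_\Omega\gamma$ is essential, and care is needed on the (possible) set where $\tilde S_*$ degenerates when passing to the limit in the normalized equation.
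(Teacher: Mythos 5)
Your overall architecture is the same as the paper's: uniform $L^\infty$ bounds, extraction of weak $L^2$ limits along $d_{I,n}\to+\infty$, homogenization of $\tilde I_n$ to a constant $I^*$ via the $O(1/d_{I,n})$ decay of its nonlocal term plus the resolvent representation, positivity via the normalized $\hat I_n=\tilde I_n/\|\tilde I_n\|_{L^\infty}$ and the standing hypothesis $\int_\Omega\beta>\int_\Omega\gamma$, and passage to the limit in \eqref{301} to obtain \eqref{final}. (The paper itself disposes of positivity by citing ``the same arguments as in Theorem \ref{theorem501},'' so your level of detail there is comparable.)

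The one step that does not go through as written is the upgrade of $\tilde S_n$ to convergence in $C(\bar\Omega)$. Your resolvent representation for $\tilde S_n(x)$ still contains $\tilde S_n(x)$ itself inside the incidence term $\frac{\beta\tilde S_n\tilde I_n}{\tilde S_n+\tilde I_n}$, and since $d_S$ is \emph{fixed} this term is not damped by a small prefactor (unlike the $\tilde I_n$-equation, where the $1/d_{I,n}$ factor kills the nonlinearity). Hence ``the right-hand side converges uniformly'' presupposes exactly the pointwise convergence of $\tilde S_n$ you are trying to establish: the argument is circular, and weak $L^2$ convergence alone does not let you pass to the limit in the pointwise nonlinear term. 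The paper's device is to multiply the first equation of \eqref{301} by $\tilde S_n+\tilde I_n$, which turns it into the pointwise quadratic
\begin{equation*}
a(x)S_n^2(x)+G_n(x)S_n(x)-H_n(x)=0,\qquad a(x)=d_S\!\int_\Omega J(x-y)\,dy,
\end{equation*}
with $h_n(x)=d_S\int_\Omega J(x-y)S_n(y)\,dy$, $G_n=(a-\gamma+\beta)I_n-h_n$ and $H_n=\gamma I_n^2+h_nI_n$; since $h_n$ and $I_n$ converge uniformly (the former because weak $L^2$ convergence passes through the convolution with the continuous kernel $J$), the explicit positive root $S_n=\bigl(-G_n+\sqrt{G_n^2+4H_n}\bigr)/(2a)$ converges in $C(\bar\Omega)$. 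Equivalently, one can observe that $s\mapsto a(x)s+\tfrac{\beta(x) s I}{s+I}$ is strictly increasing in $s$ and invert it continuously. Some such device is needed to close your argument; the rest of the proposal then goes through as in the paper.
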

\begin{proof}
Choose some sequence $\{d_{I,n}\}_{n=1}^{\infty}$ satisfying $d_{I,n}\to+\infty$ as $n\to+\infty$ and let $(S_n(x),I_n(x))$ be the solutions corresponding to system \eqref{301}.
By the same discussion as in Theorem \ref{theorem501}, we have that there is some constant $I^*$ such that $I_n(x)\to I^*$ as $n\to+\infty$. On the other hand, since $S_n(x)$ is bounded, we can find some subsequence still denoted by $\{S_n\}_{n=1}^{\infty}$, weakly converges to some nonnegative function $S^*(x)$ in $L^2(\Omega)$. Now, denote
\begin{eqnarray*}
&&a(x)=d_S\int_{\Omega}J(x-y)dy,~~h_n(x)=
d_S\int_{\Omega}J(x-y)S_n(y)dy,\\
&&G_n(x)=(a(x)-\gamma(x)+\beta(x))I_n(x)-h_n(x),
~~H_n(x)=\gamma I_n^2(x)+h_n(x)I_n(x).
\end{eqnarray*}
Thus, we have
\begin{eqnarray*}
h_n(x)\to d_S\int_{\Omega}J(x-y)S^*(y)dy~~\text{as}~n\to+\infty
\end{eqnarray*}
and
\begin{equation*}
G_n(x)\to(a(x)-\gamma(x)+\beta(x))I^*-d_S\int_{\Omega}J(x-y)S^*(y)dy
~~\text{as}~n\to+\infty.
\end{equation*}
Meanwhile,
\begin{equation*}
H_n(x)\to\gamma {I^*}^2+d_SI^*\int_{\Omega}J(x-y)S^*(y)dy
~~\text{as}~n\to+\infty.
\end{equation*}
Seen from the first equation of \eqref{301}, $S_n(x)$ satisfies
\begin{equation*}
a(x)S_n^2(x)+G_n(x)S_n(x)-H_n(x)=0.
\end{equation*}
Consequently, we have
\begin{equation*}
S_n(x)=\frac{-G_n(x)+\sqrt{G_n^2(x)+4H_n(x)}}{2a(x)}.
\end{equation*}
This implies that
\begin{equation*}
S_n(x)\to S^*(x)~~~\text{in}~C(\bar{\Omega})~~\text{as}~~n\to+\infty.
\end{equation*}
Additionally, the same arguments as in Theorem \ref{theorem501} yield that $S^*(x)>0$ and $I^*>0$. Obviously, $(S^*(x),I^*)$ satisfies \eqref{final}. The proof is complete.
\end{proof}

\section{Discussion}
\noindent

In the current paper, we firstly give the basic reproduction number $R_0$ of system \eqref{101}, which is an important threshold value to discuss the dynamic behavior of \eqref{101}. We prove that the disease persists when $R_0>1$, but when $R_0<1$, the disease dies out. Moreover, we also consider the effect of the large diffusion rates for the susceptible individuals or the infected individuals on the disease transmission and find that the nonlocal movement of the susceptible individuals or infected individuals will enhance the persistence of the disease.

In Section 2, we have proved the main result Theorem \ref{theorem206}, and established the relations between $R_0$ and $\lambda_p(d_I)$ even if $\lambda_p(d_I)$ is not always a principal eigenvalue of the operator $\mathcal{M}$ defined by \eqref{201}. Note that if $\beta(x)=\beta$ and $\gamma(x)=\gamma$ are all positive constants, then the linear problem
\begin{equation*}\label{601}
-d_I\int_{\Omega}J(x-y)(u(y)-u(x))dy+\gamma u(x)=\mu\beta u(x)~~\text{in}~\Omega
\end{equation*}
admits a principal eigenpair $(\mu_p, \varphi(x))$, where $\mu_p=\frac\gamma\beta$. Thus, it follows from Lemma \ref{theorem212} that $R_0=\frac{1}{\mu_p}=\frac\beta\gamma$ in this case. By the same discussion as Sections \ref{sec3} and \ref{sec4}, we have that the disease persists if $\beta>\gamma$ and the disease dies out if $\beta<\gamma$. But when the spatial heterogeneity is concerned, we know from Corollaries \ref{newcor1} and \ref{newcor} that the disease may persist even though there are some sites such that $\beta(x)<\gamma(x)$. That is, the spatial heterogeneity can enhance the spread of the disease.
In fact, from \cite{ABL-2008}, $x$ is a low-risk site if the local disease transmission rate $\beta(x)$ is lower than the local disease recovery rate $\gamma(x)$, and the high-risk site is defined in a similar way. Meanwhile, $\Omega$ is a low-risk domain if $\int_{\Omega}\beta(x)dx\le\int_{\Omega}\gamma(x)dx$ and a high-risk domain if $\int_{\Omega}\beta(x)dx>\int_{\Omega}\gamma(x)dx$. In the view of the biological point, Corollary \ref{newcor1} implies that the disease may spread even if the habitat of the species is low-risk as long as there is some high-risk site and the movement of the infected individuals is slow. But the quick movement of the infected individuals may suppress the spread of the disease. Following Corollary \ref{newcor}, we know that the disease will always persist if the species live in a high-risk domain and be extinct if the habitat of the species is filled with the low-risk sites. We hope these results will be useful for the disease control.

Additionally, due to the lack of regularity of stationary solutions of system \eqref{101}, we only discuss the effect of the large diffusion rates of the susceptible individuals or the infected individuals on the disease transmission. For other cases, there will be left for the future work. Also, we know that the diffusive ability of the species is different, here the diffusive ability represents the diffusive rates and the dispersal distance. Thus, it is more realistic to discuss that the susceptible individuals and the infected individuals have different dispersal strategy, that is the dispersal kernel functions are distinct from each other. This problem is of interest and it may have more complex dynamic results.

\section*{Acknowledgments}

\noindent

Fei-Ying Yang was partially supported by NSF of China (11401277) and Wan-Tong Li was
partially supported by NSF of China (11271172).


\end{document}